\definecolor{mygreen}{HTML}{43a047}
 	\definecolor{darkgreen}{rgb}{0.0, 0.2, 0.13}
\newcommand{\calT}{{\mathcal{T}}}
\newcommand{\Om}{\Omega}
\newcommand{\D}{\Delta}
\newcommand{\Gn}{\Gamma}
\newcommand{\ut}{u_t}
\newcommand{\utt}{u_{tt}}
\newcommand{\pt}{p_t}
\newcommand{\ptt}{p_{tt}}
\newcommand{\ddt}{\frac{\textup{d}}{\textup{d}t}}
\newcommand{\dt}{\, \textup{d} t}
\newcommand{\ds}{\, \textup{d} s }
\newcommand{\dx}{\, \textup{d} x}
\newcommand{\dG}{\, \textup{d} \Gamma}
\newcommand{\dGs}{\, \textup{d} \Gamma \textup{d}s}
\newcommand{\dGt  }{\, \textup{d} \Gamma \textup{d}t}
\newcommand{\dxs}{\, \textup{d}x\textup{d}s}
\newcommand{\dxt}{\, \textup{d}x\textup{d}t}
\newcommand{\intTO}{\int_0^T \int_{\Omega}}
\newcommand{\intTG}{\int_0^T \int_{\Gamma}}
\newcommand{\inttG}{\int_0^t \int_{\Gamma}}
\newcommand{\intt}{\int_0^t}
\newcommand{\intT}{\int_0^T}
\newcommand{\intO}{\int_{\Omega}}
\newcommand{\nLtwo}[1]{\|#1\|_{L^2(\Omega)}}
\newcommand{\R}{\mathbb{R}} 
\newcommand{\N}{\mathbb{N}} 
\newcommand{\Htwo}{H^2(\Omega)}
\newcommand{\Ltwo}{L^2(\Omega)}
\newcommand{\Linf}{L^\infty(\Omega)}
\newcommand{\Lfour}{L^4(\Omega)}
\newcommand{\Hone}{H^1(\Omega)}
\newcommand{\Hthree}{H^3(\Omega)}
\newcommand{\LtwoLtwo}{L^2(0,T; L^2(\Omega))}
\newcommand{\LinfLinf}{L^\infty(L^\infty(\Omega))}
\def\LtwoLtwo{L^2(L^2(\Omega))}
\def\LinfLtwo{L^\infty(L^2(\Omega))}
\def\Lthree{L^3(\Omega)}
\def\Linf{L^\infty(\Omega)}
\newcommand{\pstar}{p^*}
\newcommand{\pstart}{\pstar_{t}}
\newtheorem{theorem}{Theorem}
\newtheorem{lemma}{Lemma}
\newtheorem{proposition}{Proposition}
\newtheorem{remark}{Remark}
\numberwithin{lemma}{section}
\numberwithin{proposition}{section}
\numberwithin{theorem}{section}
\numberwithin{equation}{section}
\newcommand{\leqnomode}{\tagsleft@true}
\newcommand{\reqnomode}{\tagsleft@false}
\definecolor{grey}{rgb}{0.5,0.5,0.5}
\definecolor{darkgreen}{rgb}{0,0.5,0}
\def\ball{\mathbb{B}_R}
\def\pd{p^{\textup{d}}}
\def\wt{w_t}
\def\wtt{w_{tt}}
\def\calL{\mathcal{L}}
\def\padj{p^{\textup{adj}}}
\def\padjt{\padj_t}
\def\padjtt{\padj_{tt}}
\def\partialn{\frac{\partial}{\partial n}}
\def\tp{\tilde{p}}
\def\tpadj{\tilde{p}^{\textup{adj}}}
\def\tpadjt{\tilde{p}_t^{\textup{adj}}}
\def\tpd{\tilde{p}^{\textup{d}}}
\def\Xp{\mathcal{X}_p}
\def\intG{\int_{\Gamma}}
\def\Xfad{\Xf^{\textup{adm}}}
\def\spacef{\Xf}
\def\spacefad{\Xf^{\textup{adm}}}
\def\controltostate{\mathcal{S}}
\def\gstar{g^*}
\def\eps{\varepsilon}
\def\peps{p^\eps}
\def\pepst{\peps_{t}}
\def\pepstt{\peps_{tt}}
\def\opeps{z^\eps}
\def\zeps{z^\eps}
\def\zepsn{z^{\eps_n}}
\def\zepsnt{z_t^{\eps_n}}
\def\epsn{\eps_n}
\def\HneghalfG{H^{-1/2}(\Gamma)}
\def\frakK{\mathfrak{K}}
\def\LtwoTLtwo{L^2(0,T; \Ltwo)}
\def\LtwoHone{L^2(\Hone)}
\def\LinfHthree{L^\infty(\Hthree)}
\def\Hfour{H^4(\Omega)}
\def\LtwoHtwo{L^2(\Htwo)}
\def\LinfHtwo{L^\infty(\Htwo)}
\def\csq{c^2}
\def\fraka{\mathfrak{a}}
\def\frakm{\mathfrak{a}}
\def\frakn{\mathfrak{n}}
\def\ulfrakm{\underline{\frakm}}
\def\olfrakm{\overline{\frakm}}
\def\ulfraka{\underline{\fraka}}
\def\olfraka{\overline{\fraka}}
\def\Xf{X_f}
\def\fstar{f^*}
\def\frakmt{\frakm_t}
\def\frakat{\fraka_t}
\def\LtwoG{L^2(\Gamma)}
\def\chizero{\chi_{\Omega_0}}
\def\Gt{G_t}
\def\Gtt{G_{tt}}
\def\tildej{\tilde{j}}
\def\Vm{V^{(m)}}
\def\frakl{\mathfrak{l}}
\def\FG{F_G}
\def\Xfrakm{X_\frakm}
\def\Xfraka{X_\fraka}
\def\Xfrakn{X_\frakn}
\def\Xfrakl{X_\frakl}
\def\pref{p^{\textup{ref}}}
\def\Lsix{L^6(\Omega)}
\def\LtwotLtwo{L^2_t(\Ltwo)}
\def\LinfLthree{L^\infty(\Lthree)}
\newcommand{\Gronwall}{Gr\"onwall}
\def\csqhalf{\frac{\csq}{2}}
\def\LtwotHtwo{L^2_t(\Htwo)}
\def\LtwoHtwo{L^2(\Htwo)}
\def\greg{g^{(\nu)}}
\def\Greg{G^{(\nu)}}
\def\gregt{\greg_t}
\def\gregtt{\greg_{tt}}
\def\Gregt{\Greg_t}
\def\Gregtt{\Greg_{tt}}
\def\FGreg{F_{\Greg}}
\def\wreg{w^{(\nu)}}
\def\wregt{\wreg_t}
\def\wregtt{\wreg_{tt}}
\def\wregm{w^{(\nu, m)}}
\def\wregmt{w_t^{(\nu, m)}}
\def\wregmtt{w_{tt}^{(\nu, m)}}
\def\lesssimT{\lesssim_T}
\def\HthreehalfG{H^{3/2}(\Gamma)}
\def\HfivehalfG{H^{5/2}(\Gamma)}
\def\HonehalfG{H^{1/2}(\Gamma)}
\def\gt{g_t}
\def\LtwotHtwo{L^2_t(\Htwo)}
\def\LtwotHone{L^2_t(\Hone)}
\def\LtwoHthree{L^2(\Hthree)}
\def\Rnu{R_\nu}
\def\ptalpha{\partial_t^\alpha}
\def\frakKonenegalpha{\frakK_{1-\alpha}}
\def\LtwoT{L^2(0,T)}
\def\LoneT{L^1(0,T)}
\def\Calpha{C_\alpha}
\newcommand{\dd}[1]{\mathrm{d}#1}
\def\adjointptalpha{\widetilde{\ptalpha}}
\def\Neumannextension{\mathcal{N}^\Delta}
\def\spaceg{\mathcal{X}_g}
\def\spaceglower{\mathcal{X}^{\textup{low}}_g}
\def\spacegad{\spaceg^{\textup{adm}}}
\def\spaceadj{\mathcal{X}_{\padj}}
\def\gm{g^m}
\def\fm{f^m}
\def\pressurem{p^{(m)}}
\def\gconst{L_1}
\def\fconst{L_2}
\def\Wtwothree{W^{2,3}(\Om)}
\def\Woneinf{W^{1,\infty}(\Om)}
\def\Xk{\mathcal{X}_k}
\def\phim{\varphi^{(m)}}
\def\gdagger{g^\dagger}
\def\fdagger{f^\dagger}
\def\tildeg{\tilde{g}}
\def\tildef{\tilde{f}}
\def\pressuremd{p^{\textup{d}, m}}
\def\pressurend{p^{\textup{d}, n}}
\def\gammam{\gamma_m}
\def\alpham{\gamma_m}
\def\gn{g^n}
\def\fn{f^n}
\def\pdalpham{\pd_{\alpham}}
\def\pdgamma{p^{\textup{d}}_{\gamma}}
\def\pdgammam{p^{\textup{d}}_{\gammam}}
\def\ggamma{g_\gamma}
\def\fgamma{f_\gamma}
\def\ggammam{g_{\gammam}}
\def\fgammam{f_{\gammam}}
\def\Halpha{H^\alpha}
\def\HalphaT{H^\alpha(0,T)}
\def\Jalpha{\mathcal{J}^\alpha}
\def\Hsubalpha{H_\alpha}
\def\ulC{\underline{C}}
\def\HsubalphaT{H_\alpha(0,T)}
\def\Jnegalpha{\mathcal{J}^{-\alpha}}
\def\calR{\mathcal{R}}
\def\pstarone{p^{*, (1)}}
\def\pstartwo{p^{*, (2)}}
\def\pone{p^{(1)}}
\def\ptwo{p^{(2)}}
\def\pstardiff{\bar{p}^*}
\def\pdiff{\bar{p}}
\def\lambdam{ \lambda^{(m)}}
\def\Xlow{X_{\textup{low}}}
\def\uGalerkin{u^{(m)}}
\def\vGalerkin{v^{(m)}}
\def\utGalerkin{u_{t}^{(m)}}
\def\uttGalerkin{u_{tt}^{(m)}}
\def\uonem{u_1^{(m)}}
\def\lhs{\textup{lhs}}
\def\rhs{\textup{rhs}}
\def\etaeps{\eta^\eps}
\def\geps{g^\eps}
\def\gepstilde{\bar{g}^\eps}
\def\gtilde{\tilde{g}}
\def\calLfraka{\calL_{\fraka, \frakn, \frakl}}                                                                                                                                                                                      
\title[Optimal control of the Westervelt equation with fractional attenuation]{Optimal Neumann boundary and distributed control of the Westervelt equation with time-fractional attenuation}               
\subjclass[2020]{49J20, 35L70, 35R11}                                                          
\keywords{Westervelt equation, fractional PDEs, distributed optimal control, boundary optimal control, optimality conditions}                   
\author{Vanja Nikoli\'c$^\dagger$}  
\thanks{$^\dagger$Department of Mathematics,
	Radboud University,      
	Heyendaalseweg 135,     
	6525 AJ Nijmegen, The Netherlands (\href{vanja.nikolic@ru.nl}{vanja.nikolic@ru.nl})}   
\author{Belkacem Said-Houari$^\ddag$}
\thanks{$^\ddag$Department of Mathematics, College of Sciences, University of
	Sharjah, P.\ O.\ Box: 27272, Sharjah, United Arab Emirates (\href{bhouari@sharjah.ac.ae}{bhouari@sharjah.ac.ae})}
\begin{document}
\vspace*{4mm}  
\begin{abstract}
Optimal control of nonlinear acoustic waves is relevant in many medical ultrasound technologies, ranging from cancer therapy to targeted drug delivery, where it can help guide the precise deposition of acoustic energy. In this work, we study Neumann boundary and distributed control problems for tracking a prescribed pressure field governed by the Westervelt equation with time-fractional dissipation. This model captures nonlinear ultrasonic wave propagation in biological media and accounts for the experimentally observed power-law attenuation. We begin by extending the existing well-posedness theory for time-fractional equations to include inhomogeneous Neumann boundary data used as control inputs, which requires constructing an appropriate data extension and regularization. Using these analytical results for the forward problem, we prove the existence of globally optimal controls and analyze the stability of the optimization problem with respect to perturbations in the target pressure field and to vanishing regularization parameters. Finally, we investigate the associated adjoint equation, which has state-dependent coefficients, and use it to derive first-order necessary optimality conditions.

\end{abstract}           
	\vspace*{-7mm}         
	\maketitle                                                                   
\section{Introduction}
In medical acoustics, having precise control of ultrasonic waves in the region under treatment is often needed for both safety and efficiency; the applications include tissue ablation in cancer treatments~\cite{kennedy2003high}, lithotripsy~\cite{ikeda2016focused}, and ultrasound-enhanced drug delivery~\cite{postema2007ultrasound}. This motivates the present study of an optimal control problem subject to Westervelt's wave model of nonlinear acoustic propagation through biological media. In such media, ultrasound attenuation exhibits a non-integer power-law dependence on frequency, which can be accurately captured through time-fractional dissipation terms; see~\cite{parker2022power, wells1975absorption, hill1978ultrasonic, holm2019waves} for modeling details. \\
\indent Control of the acoustic waves can be achieved through the boundary or by having a distributed acoustic source. We consider both settings and treat them in a largely unified theoretical framework. Given the desired acoustic pressure $\pd \in C([0,T]; \Ltwo)$, we consider a regularized objective 
\begin{equation}\label{Object_fun}
\begin{aligned} 
	J(p, g, f) =&\, \begin{multlined}[t] \frac{\nu}{2}\| p-\pd\|^2_{L^2(0,T; L^2(\Omega_0))} +\frac{1-\nu}{2}\| p(T)-\pd(T)\|^2_{L^2(\Omega_0)}  
		+\calR(g,f),
	\end{multlined}
\end{aligned}
\end{equation}
with $\nu \in \{0,1\}$. We require that the pressure is close to the desired pressure locally on a region of interest $\Omega_0 \subset \Omega$ and possibly only at final time $T$ (if $\nu=0$).  The regularizing functional is given by
\begin{equation} \label{def calR}
	\calR(g,f) =	\frac{\gamma}{2}\|g\|^2_{L^2(0,T; L^2(\partial \Omega))} +\frac{\eta}{2}\| f\|^2_{L^2(0,T; L^2(\Omega))}, \quad \gamma, \eta \geq 0.
\end{equation} The acoustic pressure field is obtained by solving the Neumann initial boundary-value problem for the Westervelt wave equation~\cite{westervelt1963parametric, prieur2011nonlinear}:
\begin{equation} \label{ibvp West} \tag{IBVP$_\text{West}$} 
	\left\{
 \begin{aligned}
		&p_{tt}-c^2 \Delta p - b \Delta \ptalpha p = k(x)\left(p^2\right)_{tt}+f, \qquad &&\text{in} \ \Omega \times (0,T), \\
		&\frac{\partial p}{\partial n}=\, g \quad &&\text{on} \ \Gn = \partial \Omega, \\[1mm]
		&(p, p_t)\vert_{t=0}= (0, 0),
	\end{aligned} 
	\right.
\end{equation}
where the source term $f$ and the Neumann boundary data $g$ act as controls; the latter acts on the boundary $\Gamma= \partial \Omega$.  In \eqref{ibvp West}, $p=p(x,t)$ denotes the acoustic pressure,  $c>0$ is the speed of sound in the medium, $b >0$ the attenuation coefficient, and $k =k(x)$ is the nonlinearity coefficient. We allow for $k$ to vary in space to model varying influences of nonlinearity closer and farther from the source; the size of $k$, which will later be assumed to be small in a suitable norm, also regulates the well-posedness of \eqref{ibvp West}, as seen in Theorem~\ref{thm: wellp forward} below. The acoustic dissipation term $-b \Delta \ptalpha p$ in \eqref{ibvp West} involves the  Djrbashian--Caputo fractional derivative operator $\ptalpha(\cdot)$ of order $\alpha \in (0,1)$ to accurately model experimentally observed power-law attenuation in tissue media; we provide its definition and further for this work relevant theoretical details in Section~\ref{sec: Preliminaries}. Given the technical intricacies of studying optimal control problems involving nonlinear wave propagation, we focus on boundary and distributed excitation with homogeneous initial conditions. \\[2mm]
\noindent \textbf{Main contributions}. The aim of this work is to deepen the theoretical understanding of optimal control of nonlinear acoustic waves in complex propagation media.   We investigate the following optimal control problem:
\begin{equation} \label{opt problem} \tag{P}
	\begin{aligned}
	& \inf_{(p, g, f) \in M} J(p, g, f) \\
\text{with}\qquad& \\
	&	M= \left\{ (p, g, f) \in \Xp  \times \spacegad \times \Xfad:\, p \ \text{solves }\ \eqref{ibvp West} \right\},
	\end{aligned}
\end{equation}
where the space $\Xp$ to which the pressure field belongs will be made precise in the well-posedness analysis in Section~\ref{sec: wellp forward}. The admissible space $\spacegad$ of boundary controls is a subset of a closed ball in a suitable Banach space where the compatibility conditions with homogeneous initial data hold, whereas the admissible space $\spacefad$ of distributed controls is a closed ball in a Hilbert space; we refer to Section~\ref{sec: existence opt control} for details. These choices are dictated by the needs of the well-posedness analysis of \eqref{ibvp West}. The main contributions of our work pertain to 
\begin{enumerate}[label=(\roman*), itemsep=0.15cm]
	\item showing the existence of globally optimal controls for \eqref{opt problem},
		\item ensuring stability with respect to perturbations in the desired pressure $\pd$ and the vanishing regularization limit, and 
		\item  deriving adjoint-based first-order optimality conditions for the distributed and Neumann control cases. \vspace*{1mm}
\end{enumerate}
To the best of our knowledge, this is the first optimal control analysis for the Westervelt equation with time-fractional attenuation and Neumann boundary control. The principal difficulty in obtaining (i)--(iii), beyond adapting established optimal control frameworks~\cite{troltzsch2024optimal, manzoni2021optimal, engl1989convergence, hinze2008optimization}, stems from the quasilinear nature of the state equation, which can be rewritten as
\begin{equation} \label{West rewritten}
	((1-2kp)\pt)_t-c^2 \Delta p - b \Delta \ptalpha p = f.
\end{equation}
A rigorous analysis of \eqref{West rewritten} must guarantee that $1-2kp$ stays strictly positive to avoid degeneracy, which requires a bound on $\|k u\|_{L^\infty(0,T; L^\infty(\Omega))}$.  Because the fractional dissipation is weaker than classical strong damping, achieving such bounds necessitates higher-order energy estimates carried out on a linearized problem in combination with a fixed-point argument. The presence of a Neumann boundary control makes this analysis particularly intricate since the well-posedness theory requires a suitable extension of regularized boundary data into the interior of the domain. Furthermore, in the adjoint problem,  which has state-dependent coefficients, the regularity of data is limited as a consequence of the localization of the tracking functional to $\Omega_0$, so the higher-order smoothness arguments used in the forward analysis cannot be transferred to it and we have to also devise well-posedness arguments in a lower-regularity regime.\\[2mm]
\noindent \textbf{Related work}. Research on fractional evolution equations is very active, including in the neighboring area of inverse problems; we refer to the books~\cite{kaltenbacher2023inverse, jin2023numerical, jin2021fractional} and the references contained therein. In closer relation to the present topic, we point out the work on the identification of the nonlinearity parameter in the fractionally attenuated Westervelt equation with Dirichlet data in~\cite{kaltenbacher2022inverse}. Optimal control of fractional evolution equations is likewise an increasingly active area of investigations, although most works have focused on linear or non-wave problems; see, for example,~\cite{mophou2011optimal, antil2016space, jin2020pointwise, gunzburger2019error}. Analysis and numerical analysis of a distributed optimal control problem for a linear wave model with fractional attenuation in the form of $\ptalpha u$ and homogeneous Dirichlet data can be found in~\cite{wang2024finite}. \\
  \indent Boundary optimal control of the strongly damped Westervelt equation (that is, with the dissipation term $- b \Delta \pt$ in place of $- b \Delta \ptalpha p$ in \eqref{ibvp West}) has been rigorously investigated in~\cite{clason2009boundary}. In this strongly damped case, the equation is known to have parabolic-like character which can be exploited to ensure its global-in-time well-posedness in different settings in terms of boundary conditions; see, e.g.,~\cite{kaltenbacher2009global, meyer2011optimal}. With time-fractional attenuation as in \eqref{ibvp West}, this property is lost and the energy arguments do not carry over. The well-posedness of the fractionally damped Westervelt equation supplemented by Dirichlet boundary data has been established recently in~\cite{kaltenbacher2022inverse}; see also \cite{baker2024numerical, kaltenbacher2022limiting}. 
   Well-posedness in the case of Neumann data as in \eqref{ibvp West} is, to the best of our knowledge, still an open problem and it is thus the first step we take in this work toward understanding \eqref{opt problem}.%
\subsection*{Organization of the paper} 
The remainder of the paper is structured as follows. Section \ref{sec: Preliminaries} summarizes the necessary background on fractional calculus based on the Djrbashian–Caputo derivative, which we use in the analysis of the optimal control problem. Section \ref{sec: wellp forward} establishes well-posedness for a linearized wave equation in both low- and high-regularity settings. The low-regularity result is essential for the adjoint analysis and for proving differentiability of the control-to-state map. The high-regularity result, which requires the construction of a suitable extension of regularized Neumann data, is then combined with Banach’s fixed-point theorem to prove the well-posedness of the state problem when $k=k(x)$ is sufficiently small. This result in contained in Theorem~\ref{thm: wellp forward}. In Section \ref{sec: existence opt control} we prove the existence of optimal controls in Theorem~\ref{thm: existence opt control} and then demonstrate stability of the problem with respect to perturbations in the target pressure, as well as in the limit of vanishing regularization parameters $\eta=\gamma \searrow 0$. Finally, building upon these results, in Section~\ref{sec: optimality conditions} we analyze the adjoint problem, establish the differentiability of the control-to-state operator, and then derive the necessary optimality conditions for the Neumann $(J=J(p, g))$ and  distributed $(J=J(p,f))$ optimal control problems separately. The optimality conditions are stated in Theorems~\ref{thm: opt cond boundary control} and~\ref{thm: opt cond distributed control}, respectively.
\\[2mm]
\noindent \textbf{Notation}. We use $(\cdot, \cdot)_{\Ltwo}$ to denote the $\Ltwo$ inner product. When working with Bochner spaces  $W^{m,p}(0,T;X)$, we often omit the time interval $(0,T)$ from the notation of norms. For example, $\|\cdot\|_{L^2(\Ltwo)}$ denotes the norm in $L^2(0,T; \Ltwo)$. A subscript $t$ indicates that the temporal domain is $(0,t)$ for some $t \in [0,T]$. For example, $\|\cdot\|_{L^2_t(\Ltwo)}$ denotes the norm in $L^2(0,t; \Ltwo)$ for $t \in (0,T)$. \\
\indent We frequently use $\lhs \lesssim \rhs$ in the estimates to denote $\lhs \leq C \rhs$, where $C>0$ is a generic constant. When the hidden constant depends on the final time $T$, we use the notation $\lhs \lesssimT \rhs$.
\section{Theoretical preliminaries in fractional calculus} \label{sec: Preliminaries}
In this section, we recall several results from fractional calculus involving the Djrbashian--Caputo fractional derivative, which will be needed for the subsequent analysis of the state and optimal control problems.  Most results stated below can be found in~\cite[Ch.\ 2]{kubicaTimefractionalDifferentialEquations2020}, where additional details are provided. \\[2mm]
\noindent \textbf{Fractional calculus}. 
For $\alpha>0$,  the Riemann--Liouville fractional integral operator is given by 
	\begin{equation}\label{def fract int}
		\Jalpha v (t) = \frac{1}{\Gamma(\alpha)} \int_0^t (t-s)^{\alpha -1} v(s) \ds \quad \text{with the domain } \mathcal{D}(\Jalpha)=\LtwoT,
	\end{equation}
	where $\Gamma(\cdot)$ denotes the Gamma function; see~\cite{kubicaTimefractionalDifferentialEquations2020, gorenflo1999operator}. For $0<\alpha<1$, the fractional Sobolev space  $\HalphaT$ is endowed with the norm 
\begin{equation}
	\|v\|_{H^{\alpha}(0,T)} =
	\left(
	\|v\|_{L^{2}(0,T)}^{2}
	+ \int_{0}^{T} \int_{0}^{T} 
	\frac{|v(t) - v(s)|^{2}}{|t-s|^{1+2\alpha}} \dt \ds
	\right)^{\tfrac{1}{2}};
\end{equation}	
cf.~\cite{di2012hitchhiker}. According to \cite[Theorem 2.1]{kubicaTimefractionalDifferentialEquations2020},  the fractional integral operator $\Jalpha: L^2(0,T) \rightarrow \HsubalphaT$ is bijective, where the space $\HsubalphaT$ is defined as
	\begin{equation} \label{def Hsubalpha}
		\begin{aligned}
			\Hsubalpha(0,T) = \begin{cases}
				\left \{ v \in \HalphaT: \ v(0) =0\right \} \quad &\text{if} \ \frac12< \alpha <1, \\[1mm]
				\{v \in H^{1/2}(0,T): \ \int_0^T \frac{|v(t)|^2}{t}\dt <\infty\}\quad &\text{if} \  \alpha =\frac12, \\[1mm]
				\HalphaT &\text{if} \quad 0<\alpha < \frac12,
			\end{cases}
		\end{aligned}
	\end{equation}
	endowed with the norm
	\begin{equation} \label{norm Hsubalpha}
		\begin{aligned}
		\|v\|_{\HsubalphaT} = \begin{cases}
			\|v\|_{\HalphaT}, \quad &0< \alpha <1, \ \alpha \neq \frac12, \\
			\left(\|v\|^2_{H^{1/2}(0,T)}+  \int_0^T \frac{|v(t)|^2}{t}\dt\right)^{1/2}, \quad &\alpha = \frac12.
		\end{cases}
		\end{aligned}
	\end{equation}
It is known that the space ${_0}C^1[0,T] = \{v \in C^1[0,T]: v(0)=0\}$ is dense in $\Hsubalpha(0,T)$:
	\begin{equation} \label{density}
		\overline{{_0}C^1[0,T]}^{\Hsubalpha(0,T)} = \Hsubalpha(0,T);
	\end{equation}
see~\cite[Lemma 2.2]{kubicaTimefractionalDifferentialEquations2020}. The (generalized) Djrbashian--Caputo fractional derivative is defined as the inverse of the fractional integral operator in \eqref{def fract int}, that is,
	\begin{equation} \label{def Caputo regularized}
		\ptalpha v = \Jnegalpha v, \quad   \mathcal{D}(\ptalpha)= \Hsubalpha(0,T);
	\end{equation}
	see~\cite[Definition 2.1]{kubicaTimefractionalDifferentialEquations2020}.  On account of~\cite[Theorem 2.4, (2.25)]{kubicaTimefractionalDifferentialEquations2020}, for $\alpha \in (0,1)$, $\ptalpha$ is an isomorphism between $\HsubalphaT$ and $\LtwoT$, and thus we have 
	\begin{equation}
		\| \ptalpha v\|_{\LtwoT} \sim \|v\|_{\HsubalphaT}.
	\end{equation}
		If $v \in {_0W}^{1,1}(0,T)= \{ v \in W^{1,1}(0,T): v(0)=0\}$, then the generalized Djrbashian--Caputo matches the usual definition
	\begin{equation}\label{def Caputo}
		\ptalpha v =  \frac{1}{\Gamma(1-\alpha)} \int_0^t (t-s)^{-\alpha} v_t(s)\ds, \ \quad \alpha \in (0,1);
	\end{equation}
	see~\cite[Theorem 2.4]{kubicaTimefractionalDifferentialEquations2020}.  In this case, $\ptalpha v = \frakKonenegalpha * v_t $
	with the weakly singular memory kernel
	\begin{equation} \label{Abel kernel}
		\frakKonenegalpha(t) = \frac{1}{\Gamma(1-\alpha)}t^{-\alpha}.
	\end{equation} 
Moreover, $\frakKonenegalpha\in L^p(0,T)$ for $0<\alpha<1/p$ and $p \in (1, \infty)$.\\[2mm]
\noindent \textbf{Helpful inequalities}.  Given a Banach space X, we introduce
	\begin{equation}
		\Hsubalpha(0,T; X) = \Jalpha L^2(0,T; X),
	\end{equation}
	which is a Banach space endowed with the norm $\|u\|_{\Hsubalpha(0,T; X)} = \|\ptalpha u\|_{L^2(0,T; X)}$; see~\cite[Lemma 2]{huang2025well}.
 For $u \in \Hsubalpha(0,t; \Ltwo)$,  it holds that  
	\begin{equation} \label{coercivity ineq}
	\int_0^t \int_\Omega \partial_t^\alpha u \,  \partial_t u
\dxs\geq \frac{1}{2}(\frakK_\alpha* \|\partial_t^\alpha u\|_{\Ltwo}^2)(t)\geq \Calpha(T) \|\partial_t^\alpha u\|_{\LtwotLtwo}^2,
	\end{equation}
	where $\Calpha(T) = \frac{T^{\alpha}}{2\Gamma(1-\alpha)}$, by~\cite[Theorem 3.1]{kubicaTimefractionalDifferentialEquations2020}. In the analysis, we  use the $\alpha$-uniform lower bound $\Calpha(T) \geq \ulC(T)$ to guarantee that the constants in the estimates are $\alpha$-independent.
We also use the inequality
	\begin{equation} \label{coercivity ineq 2} 
		\int_0^t \int_\Omega u(s) \left(\Jalpha u\right)(s)    \dxs \geq 0, \quad u \in L^2(0,t; \Ltwo),
	\end{equation}
see, e.g.,~\cite[Sec.\ 5]{kaltenbacher2022limiting} and \cite[Lemma 2]{gorenflo1999operator}.	Furthermore, for any $u \in \Hsubalpha(0,T;X)$, the identity $u= \Jalpha \Jnegalpha u$, together with
 Young's convolution inequality, yields 
	\begin{equation} \label{fundamental inequality}
		\begin{aligned}
		\|u\|_{L^2(0,T; X)} \leq \|\frakK_\alpha\|_{\LoneT} \|\ptalpha u\|_{L^2(0,T; X)}.
		\end{aligned}
	\end{equation}
	
\noindent \textbf{Adjoint of the fractional derivative}. 	
In the derivation of first-order optimality conditions via the adjoint problem corresponding to~\eqref{ibvp West}, we require the Banach space adjoint $\adjointptalpha$ of the time-fractional derivative operator $\partial_t^{\alpha}$:
	\begin{equation} \label{adjoint operator rule}
	\begin{aligned}
		\int_0^T (\partial_t^{\alpha} v)(t) \varphi(t) \dd{t} 
		= \int_0^T v(t) ( \adjointptalpha \varphi)(t) \dt, \quad v \in \Hsubalpha(0,T). 
	\end{aligned}
\end{equation}
The adjoint operator is defined as
	\begin{equation}
		\adjointptalpha = \widetilde{\Jalpha}^{-1} \quad \text{with the domain}\  \mathcal{D}(\adjointptalpha) = \tau \Hsubalpha(0,T), \ \text{for } \alpha>0, 
	\end{equation}
	where $(\tau v)(t) \coloneqq v(T-t)$ denotes the time reversal operator and
	\begin{equation} \label{id widetilde Jalpha}
		\widetilde{\Jalpha}v(t) = \frac{1}{\Gamma(\alpha)}\int_t^T(s-t)^{\alpha-1}v(s)\ds = \tau \Jalpha(\tau v)(t);
	\end{equation}
see~\cite[Theorem I-5]{Yamamoto2022Adjoint}. To carry out the adjoint-based arguments in Section~\ref{sec: existence opt control}, we also require the following time-reversal identity:
	\begin{equation} \label{identity timeflip}
	\begin{aligned}
		\tau \left( \adjointptalpha \varphi \right)  = \ptalpha \left(\tau \varphi \right), \qquad \varphi \in \tau \Halpha(0,T).
	\end{aligned}
	\end{equation}
Indeed, by \eqref{id widetilde Jalpha}, we have
	\begin{equation}
	\begin{aligned}
\tau	\Jalpha \left(	\tau  \left( (\widetilde{\Jalpha})^{-1} \varphi \right)  \right) = \widetilde{\Jalpha}\left(\widetilde{\Jalpha}\right)^{-1} \varphi = \varphi.
	\end{aligned}
\end{equation}
Hence, since $(\tau(\tau v)(t))=v(t)$, we have 
\begin{equation}
	\Jalpha \left(	\tau  \left( (\widetilde{\Jalpha})^{-1} \varphi \right)  \right) = \tau  \varphi.
\end{equation}
Then because $\Jnegalpha = (\Jalpha)^{-1}$ on $\Hsubalpha(0,T)$, it follows that
\begin{equation}
	\tau  \left( (\widetilde{\Jalpha})^{-1} \varphi \right)= \Jnegalpha(\tau \varphi),
\end{equation}
which is precisely \eqref{identity timeflip}.
\\ \indent For $v$, $\varphi \in H^1(0,T)$, with $v(0)=\varphi(T)=0$,  the adjoint operator can be expressed as
	\begin{equation} \label{adjoint operators}
		\begin{aligned}
			(\adjointptalpha \varphi)(s) &= -\frac{1}{\Gamma(1-\alpha)} \int_s^T (t-s)^{-\alpha} \varphi'(t) \dd{t}; 
		\end{aligned}
	\end{equation}
see~\cite[Sec.\ 3]{kaltenbacher2022fractional}.                                       
\section{Analysis of wave equations with time-fractional attenuation} \label{sec: wellp forward} 
In this section, we lay the groundwork for the analysis of the optimal control problem by studying the well-posedness of Neumann boundary value problems for both the linearized wave equation and the Westervelt equation in~\eqref{ibvp West}.  We consider a linearized wave equation of the form
\begin{equation}\label{linearized eq} 
\begin{aligned}
		&\ \fraka(x,t) \utt - \csq \Delta u-b \Delta \ptalpha u+ \frakl(x,t)  \ut+ \frakn(x,t) u= F(x,t)
	\end{aligned} 
\end{equation}
The well-posedness result for \eqref{linearized eq} is relevant not only in the fixed-point argument for the forward problem, but also in the analysis of the adjoint problem and in establishing differentiability of the control-to-state mapping. We conduct the linear acoustic investigations under the following non-degeneracy assumption on the leading variable coefficient: we assume that there exist $\ulfrakm$, $\olfrakm$,  such that
\begin{equation}\label{nondegeneracy assumption frakm frakm}
	0<\ulfraka \leq \fraka(x,t)\leq \olfraka \quad \text{for all $(x,t)\in \Omega  \times [0,T]$}.
\end{equation}
In terms of later studying the nonlinear pressure problem, this coefficient can be seen as a placeholder for $1 -2k p$, whose non-degeneracy will be ensured through smallness of the nonlinearity coefficient  $k=k(x    )$ in a suitable norm in the fixed-point argument; cf.~\eqref{West rewritten}. The coefficient $\frakl$ can be seen as a placeholder for $2k \ut$. The term 
$\frakn$ is included to accommodate the analysis of differentiability of the control-to-state mapping Section~\ref{sec: optimality conditions}; cf.~Proposition~\ref{prop: diff controltostate}. \\
\indent Because the adjoint problem and the differentiability analysis will take place in a lower regularity setting than the state equation, we establish two well-posedness results: one in a lower-regularity regime and another in a higher-regularity regime.
	\subsection{Analysis of a linearized wave problem: Lower-regularity regime}
The first well-posedness result is stated in a relatively low regularity setting for the data and variable coefficients, suitable for later analyzing the adjoint problem and the differentiability of the control-to-state mapping. As the adjoint problem contains a non-zero terminal condition (see \eqref{adjoint problem}), arising from the time-localized part of the objective function, we analyze here a linearized wave problem with $\ut \vert_{t=0}=u_1$:
				\begin{equation}\label{linearized problem} \tag{IBVP$_\text{lin}$}
	\left\{	\begin{aligned}
		&\ \fraka(x,t) \utt - \csq \Delta u-b \Delta \ptalpha u+ \frakl(x,t)  \ut+ \frakn(x,t) u= F(x,t),\\
		&\ \frac{\partial u}{\partial n} = g  \quad \text{on } \ \Gamma, \\
		&\ (u, \ut) \vert_{t=0}= (0, u_1).  
	\end{aligned} \right.
\end{equation}					
We define weak solutions of \eqref{linearized problem} as functions satisfying
\begin{equation} \label{weak linearized problem low} 
	\begin{aligned}
	&	\begin{multlined}[t]-	\intTO (\fraka \varphi)_t \, \ut  \dxt +\intTO (\csq \nabla u +b \nabla \ptalpha u)\cdot \nabla \varphi \dxt 
		\\	\hspace*{2cm}+ \intTO  (\frakl  \ut+ \frakn u) \varphi \dxt - \intTG (c^2 g+ b \ptalpha g) \varphi \dGt   
		\end{multlined}\\
					=&\,   \intO  \fraka(0)u_1 \varphi(0) \dx+  \intTO F \varphi \dxt
	\end{aligned}
\end{equation}
 for all test functions   $\varphi \in L^2(0,T; \Hone) \cap H^1(0,T; \Ltwo)$ with $\varphi(T)=0$.
	\begin{proposition}\label{prop: wellp forward lower}
		Let $T>0$ and let $\Omega \subset \R^d$ with $d \in \{1,2,3\}$ be a Lipschitz-regular bounded domain. Let $c>0$,  $b \in (0, \bar{b}]$ for some $\bar{b}>0$, and $\alpha \in (0,1)$. Assume that the variable coefficients in \eqref{linearized problem} satisfy 
	\begin{equation}
		\begin{aligned}
			&\fraka \in    W^{1, \infty}(0,T; \Linf) ,\quad
			 \frakl \in  L^\infty(0,T; \Linf), \quad
			\frakn \in  L^2(0,T; \Lthree),
		\end{aligned}
	\end{equation}
	and that the non-degeneracy condition on $\fraka$ in \eqref{nondegeneracy assumption frakm frakm} holds. 
	Furthermore, let $F \in \LtwoTLtwo$, $g \in H^2(0,T; \HneghalfG)$, and $u_1 \in \Ltwo$.
	Then there exists a solution $u \in \Xlow$  of \eqref{weak linearized problem low}, where
	\begin{equation} \label{def Xlow}
	\begin{aligned}
		\Xlow = \left\{ u \in L^\infty(0,T; \Hone): \ut \in  L^{\infty}(0,T; \Ltwo), \ \ptalpha u \in (0,T; \Hone) \right\}. 
	\end{aligned}
\end{equation}
	The solution satisfies the estimate
	\begin{equation} \label{final est lower} 
		\begin{aligned}
			\|u\|^2_{\Xlow} \lesssim &\, \begin{multlined}[t] \Lambda_0(\fraka, \frakl, \frakn)	 \left(	\nLtwo{ u_1}^2 
				+ \|F\|^2_{\LtwoLtwo}
				+\|g\|^2_{H^2(\HneghalfG)} \right), \end{multlined}
		\end{aligned}
	\end{equation}
	where
	\begin{equation} \label{def Lambda_0}
		\begin{aligned}
		\Lambda_0(\fraka, \frakl, \frakn)	=	\exp\left\{CT(	1+\|\fraka\|_{W^{1, \infty}(\Linf)}+\|\frakl\|_{L^\infty(\Linf)}+\|\frakn\|^2_{L^2(\Lthree)})\right\}.
		\end{aligned}
	\end{equation}
	The hidden constants are independent of $b$ and $\alpha$.
	\end{proposition}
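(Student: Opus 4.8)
The plan is to obtain a solution by a Faedo--Galerkin scheme: semidiscretize the weak formulation \eqref{weak linearized problem low} in a finite-dimensional subspace, derive energy estimates uniform in the discretization level by testing with the time derivative of the approximation, and pass to the limit. The coercivity inequality \eqref{coercivity ineq} is the key structural tool that makes the otherwise only distributionally meaningful fractional dissipation term usable constructively.

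\emph{Galerkin approximation.} Fix a basis $\{\phi_j\}_{j\in\N}$ of $\Hone$ (for instance the eigenfunctions of the Neumann Laplacian, so that the inhomogeneous Neumann datum enters only through the boundary integral in \eqref{weak linearized problem low}), and seek $\uGalerkin(t)=\sum_{j=1}^m c^m_j(t)\phi_j$ solving the Galerkin semidiscretization of \eqref{weak linearized problem low} with $\uGalerkin(0)=0$ and $\utGalerkin(0)$ the $\Ltwo$-projection of $u_1$ onto the span. Since $\fraka$ satisfies \eqref{nondegeneracy assumption frakm frakm}, the time-dependent mass matrix $(\int_\Omega\fraka\,\phi_i\phi_j\dx)_{ij}$ is uniformly invertible, and because $\uGalerkin(0)=0$ one has $\ptalpha\uGalerkin=\frakKonenegalpha*\utGalerkin$, so the discrete system is a linear second-order Volterra integro-differential system with a weakly singular $L^1$-kernel and an $L^2$-in-time right-hand side; its solvability on all of $[0,T]$ follows from standard fixed-point and continuation arguments.

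\emph{A priori estimate.} Test the semidiscrete equation with $\utGalerkin$ and integrate over $(0,t)$. The $\fraka$-term produces $\tfrac12\int_\Omega\fraka(t)|\utGalerkin(t)|^2\dx-\tfrac12\int_0^t\int_\Omega\fraka_t|\utGalerkin|^2\dxs$, the first bounded below by $\tfrac{\ulfraka}{2}\|\utGalerkin(t)\|^2_{\Ltwo}$ and the second controlled by $\|\fraka\|_{W^{1,\infty}(\Linf)}$; the $-\csq\Delta u$ term produces $\tfrac{\csq}{2}\|\nabla\uGalerkin(t)\|^2_{\Ltwo}$ together with the boundary term $-\csq\int_0^t\langle g,\utGalerkin\rangle_{\HneghalfG,\HonehalfG}\ds$; the fractional term produces $b\int_0^t\int_\Omega\nabla\ptalpha\uGalerkin\cdot\nabla\utGalerkin\,\dxs$, which by \eqref{coercivity ineq} applied componentwise to $\nabla\uGalerkin$ is bounded below by $b\,\ulC(T)\|\ptalpha\nabla\uGalerkin\|^2_{\LtwotLtwo}$, together with the boundary term $-b\int_0^t\langle\ptalpha g,\utGalerkin\rangle_{\HneghalfG,\HonehalfG}\ds$. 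The remaining terms are estimated by $\|\frakl\|_{L^\infty(\Linf)}\|\utGalerkin\|^2_{\Ltwo}$ and, using Hölder with exponents $(3,6,2)$ and $\Hone\hookrightarrow\Lsix$ (valid for $d\le 3$), by $\|\frakn\|_{\Lthree}\|\uGalerkin\|_{\Hone}\|\utGalerkin\|_{\Ltwo}$; the source contributes $\|F\|_{\Ltwo}\|\utGalerkin\|_{\Ltwo}$. The two boundary terms are handled by integration by parts in time using $\uGalerkin(0)=0$, e.g.\ $\int_0^t\langle g,\utGalerkin\rangle\ds=\langle g(t),\uGalerkin(t)\rangle-\int_0^t\langle g_t,\uGalerkin\rangle\ds$, and analogously for the $\ptalpha g$ term, where $\ptalpha g$ and $\partial_t(\ptalpha g)$ inherit, respectively, $L^\infty(0,T;\HneghalfG)$ and $L^2(0,T;\HneghalfG)$ bounds from $g\in H^2(0,T;\HneghalfG)$ via Young's convolution inequality and $H^2(0,T)\hookrightarrow W^{1,\infty}(0,T)$; the trace inequality and Young's inequality then absorb the $\|\uGalerkin(t)\|_{\Hone}$-factors into the left-hand side. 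Completing $\|\nabla\uGalerkin(t)\|_{\Ltwo}$ to the full $\Hone$-norm via $\|\uGalerkin(t)\|^2_{\Ltwo}\le t\int_0^t\|\utGalerkin\|^2_{\Ltwo}\ds$ and applying \Gronwall's inequality with the $\LoneT$-integrable weight $1+\|\fraka\|_{W^{1,\infty}(\Linf)}+\|\frakl\|_{L^\infty(\Linf)}+\|\frakn(\cdot)\|^2_{\Lthree}$ yields, uniformly in $m$, the bound \eqref{final est lower}--\eqref{def Lambda_0}; the control of $\ptalpha\uGalerkin$ in $\LtwoTHone$ then follows by combining the coercivity bound on $\ptalpha\nabla\uGalerkin$ with $\|\ptalpha\uGalerkin\|_{\LtwotLtwo}\le\|\frakKonenegalpha\|_{\LoneT}\|\utGalerkin\|_{\LtwotLtwo}$.

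\emph{Passage to the limit and main obstacle.} The uniform bounds provide a subsequence with $\uGalerkin\rightharpoonup^{*}u$ in $L^\infty(0,T;\Hone)$, $\utGalerkin\rightharpoonup^{*}\ut$ in $L^\infty(0,T;\Ltwo)$, and $\ptalpha\uGalerkin\rightharpoonup\ptalpha u$ in $\LtwoTHone$; passing to the limit in \eqref{weak linearized problem low} (first for test functions supported in finitely many modes, then by density) is routine because every term is linear in $u$ and the coefficients multiply against fixed test data — for instance $(\fraka\varphi)_t\in\LtwoTLtwo$ since $\fraka\in W^{1,\infty}(0,T;\Linf)$ and $\varphi\in H^1(0,T;\Ltwo)\cap L^2(0,T;\Hone)$. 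By weak lower semicontinuity of the norms the limit satisfies \eqref{final est lower} and belongs to $\Xlow$. I expect the delicate point to be the rigorous treatment of the two boundary terms, and in particular the one containing $\ptalpha g$: only $\HneghalfG$ spatial regularity of the data is available, so the fractional time derivative has to be transferred onto $g$ by integration by parts, which is exactly what dictates the $H^2(0,T;\HneghalfG)$ hypothesis on the Neumann datum.
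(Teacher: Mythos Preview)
Your proposal is correct and follows essentially the same approach as the paper: Faedo--Galerkin with Neumann eigenfunctions, testing with $\utGalerkin$, the coercivity inequality \eqref{coercivity ineq} for the fractional dissipation, integration by parts in time on the boundary contributions, completion of the $\Hone$-norm via $\|\uGalerkin(t)\|_{\Ltwo}^2\le t\int_0^t\|\utGalerkin\|_{\Ltwo}^2\ds$, Gr\"onwall, and weak compactness.

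One small technical slip worth flagging: your assertion that $\partial_t(\ptalpha g)\in L^2(0,T;\HneghalfG)$ is not correct for $\alpha\ge\tfrac12$ when $g_t(0)\neq 0$, because $\partial_t(\frakKonenegalpha*g_t)=\frakKonenegalpha*g_{tt}+\frakKonenegalpha(t)\,g_t(0)$ and the second summand carries the singularity $t^{-\alpha}\notin L^2(0,T)$. The paper does not attempt to put $\partial_t(\ptalpha g)$ into $L^2$; instead it bounds the pairing $\int_0^t\langle\frakKonenegalpha(s)g_t(0),\uGalerkin(s)\rangle_{\HneghalfG,\HonehalfG}\ds$ directly by $\|\frakKonenegalpha\|_{L^1(0,T)}\|g_t(0)\|_{\HneghalfG}\sup_{s\in[0,t]}\|\uGalerkin(s)\|_{\Hone}$, using that $\|\frakKonenegalpha\|_{L^1(0,T)}=T^{1-\alpha}/\Gamma(2-\alpha)$ is $\alpha$-uniformly bounded. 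With this adjustment the rest of your argument goes through unchanged.
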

	\begin{proof}
		We establish well-posedness of \eqref{linearized problem} using a Faedo–Galerkin method, analogous to the approach used in the analysis of linear wave equations with time-fractional attenuation and homogeneous Dirichlet boundary conditions; see, e.g.,~\cite{kaltenbacher2022inverse, kaltenbacher2022limiting, careaga2025westervelt}. The main difference lies in the testing procedure, which here is adapted to handle lower regularity. To discretize \eqref{linearized problem homogen} in space, we employ as the basis functions $\{\phim\}_{m \geq 1}$ eigenfunctions of the Laplacian with homogeneous Neumann conditions 
		\begin{equation}\label{bases_Lapl}
		-	\Delta \phim = \lambdam \phim \ \text{ in } \Omega,  \quad \frac{\partial \phim}{\partial n}=0 \ \text{ on } \Gamma.
		\end{equation}
		The problem can then be posed in $\Vm=\textup{span}\{\varphi^{(1)}, \ldots, \phim\}$ and solved for
		\begin{equation}
			\uGalerkin(t) = \sum_{\ell=1}^m \xi_m^\ell(t) \phim,
		\end{equation}
		by requiring that
				\begin{equation} \label{identity_1 Step 1 low}
			\begin{aligned} 
				&	\begin{multlined}[t]
					(\fraka \uttGalerkin ,   \vGalerkin)_{\Ltwo} +	(\csq \nabla \uGalerkin+ b \nabla \ptalpha \uGalerkin,\, \nabla   \vGalerkin)_{\Ltwo} \\ \hspace*{2cm}
					-(\csq g+b \ptalpha g, \vGalerkin )_{L^2(\Gamma)}
					= ( F-\frakl \utGalerkin-\frakn \uGalerkin,  \vGalerkin)_{\Ltwo}, 
				\end{multlined} 
			\end{aligned}
		\end{equation}
	for all $\vGalerkin \in \Vm$,	where we impose $\uGalerkin(0)=0$ and $(\uGalerkin_t(0), \phim)_{\Ltwo} = (u_1, \phim)_{\Ltwo}$, $k=1, \ldots m$ (in other words we choose approximate initial data as $L^2$ projections of the exact data).
		Existence and uniqueness of $\uGalerkin \in H^2(0,T  ; \Vm)$ in this form follow in a directly analogous manner to the Dirichlet case by restating the problem in terms of $\boldsymbol{\mu}=  \boldsymbol{\xi}_{tt}$ with $\boldsymbol{\xi} = [\xi^\ell_1 \ \xi_2^\ell \, \ldots\, \xi^\ell_m]^T$ and relying on the existence theory for Volterra integral equations of second kind; see~\cite[Appendix A]{kaltenbacher2022limiting} for details. We thus omit those arguments here and proceed directly to derive the energy estimate that is uniform with respect to the discretization parameter $m$. \\[1mm]
		
		\noindent $\bullet$ \underline{Testing with $\utGalerkin$}:  Testing \eqref{identity_1 Step 1 low} with $v^{(m)} =\utGalerkin$, and invoking the coercivity bound \eqref{coercivity ineq}, we obtain the inequality
		\begin{equation} \label{energy_ineq_1 Step 1 low_}
			\begin{aligned}
				&\frac12\nLtwo{\sqrt{\fraka(s)}  \utGalerkin (s)}^2  \Big \vert_{0}^t+ \csqhalf\nLtwo{\nabla \uGalerkin(t)}^2 + b
				\Calpha(T) \| \nabla \ptalpha  \uGalerkin \|^2_{\LtwotLtwo}  \\
				\leq&\,\begin{multlined}[t] 
					\frac12 \int_0^t (\frakat \utGalerkin,  \utGalerkin)_{\Ltwo}\ds					+\intt ( \frakl\utGalerkin +\frakn \uGalerkin,  \utGalerkin )_{\Ltwo} \ds.\\
					 + \inttG (c^2 g+ b \ptalpha g) \utGalerkin \dGs+ \intt (F, \utGalerkin)_{\Ltwo} \ds .
				\end{multlined}
			\end{aligned} 
		\end{equation}
		We estimate the first term on the right-hand side using H\"older's inequality:
		\begin{equation}\label{Est_1 low}
			\begin{aligned}
				\frac12 \int_0^t (\frakat \utGalerkin,  \utGalerkin)_{\Ltwo}\ds\lesssim &\,\int_0^t \left\|\frakat\right\|_{\Linf} \nLtwo{  \utGalerkin}^2\ds.
			\end{aligned}
		\end{equation}
		Similarly, using also Young's $\eps$-inequality, and the Sobolev embedding $H^1(\Omega)\hookrightarrow L^6(\Omega)$, we have, for any $\eps>0$, 
		\begin{equation}\label{Est_2 low}
			\begin{aligned}
				& \intt ( \frakl\utGalerkin +\frakn \uGalerkin,  \utGalerkin )_{\Ltwo} \ds\\ 
				\lesssim&\, \int_0^t \left\|\frakl\right\|_{\Linf} \nLtwo{\utGalerkin}^2\ds + \intt \|\frakn\|_{\Lthree}\|\uGalerkin\|_{\Lsix}\|\utGalerkin\|_{\Ltwo} \ds, \\
				\lesssim&\, 
				 \int_0^t \left( \|\frakl\|_{\Linf} +\|\frakn\|^2_{\Lthree}\right) \nLtwo{\utGalerkin}^2\ds +\eps \max_{s \in [0,t]} \|\uGalerkin(s)\|^2_{\Hone}.
			\end{aligned}
		\end{equation}
	Concerning the boundary terms in \eqref{energy_ineq_1 Step 1 low_}, we integrate by parts in time:
		\begin{equation}
			\begin{aligned}
				\inttG (c^2 g+ b \ptalpha g) \utGalerkin \dGs= \intG (c^2 g+ b \ptalpha g) \uGalerkin \dG \Big \vert_0^t - \inttG (c^2 g+ b \ptalpha g)_t \uGalerkin \dGs.
			\end{aligned}
		\end{equation}
		Since $\uGalerkin\vert_{t=0}=0$ and $(\frakKonenegalpha*g_t)_t = \frakKonenegalpha*g_{tt}+ \frakKonenegalpha(t)g_t(0)$, we obtain
		\begin{equation}
			\begin{aligned}
				&\inttG (c^2 g+ b \ptalpha g) \utGalerkin \dGs\\
				= &\, \intG (c^2 g+ b \ptalpha g)(t) \uGalerkin(t) \dG - \inttG (c^2 g_t+ b \partial_t^{1+\alpha}g + b \frakKonenegalpha g_t(0)) \, \uGalerkin \dGs.
			\end{aligned}
		\end{equation}
		Using H\"older's inequality and the trace theorem, we estimate
		\begin{equation}
			\begin{aligned}
				\inttG (c^2 g+ b \ptalpha g) \utGalerkin \dGs
				\lesssim&\, \begin{multlined}[t] \| g(t)\|^2_{\HneghalfG}+ \| \ptalpha g(t)\|^2_{\HneghalfG}+\eps \|\uGalerkin(t)\|^2_{\Hone}\\
					+\|\frakKonenegalpha\|^2_{L^1(0,T)}\|g_t(0)\|^2_{\HneghalfG}+\eps \max_{s \in [0,t]} \|\uGalerkin(s)\|^2_{\Hone}\\
					+ \|g_t\|^2_{L^2(\HneghalfG)} + \|\partial_t^{1+\alpha} g\|^2_{L^2(\HneghalfG)} 
				\end{multlined}
			\end{aligned}
		\end{equation}
		for any $\eps>0$, where we have $\|\frakKonenegalpha\|_{L^1(0,T)} = \frac{T^{1-\alpha}}{\Gamma(2-\alpha)}\leq C$ for some $C>0$ independent of $\alpha$. By the assumed regularity of $g$, we have $g \in H^2(0,T; \HneghalfG) \hookrightarrow C^1([0,T]; \HneghalfG)$. 	
		Additionally, 
		\begin{equation} \label{rhs est low}
			\begin{aligned}
			 \intt	 ( F,  \utGalerkin)_{\Ltwo} \ds \lesssim&\, 
				 \|F \|^2_{L^2(\Ltwo)}+ \intt \| \utGalerkin\|^2_{\Ltwo}\ds.
			\end{aligned}
		\end{equation}
	Substituting \eqref{Est_1 low}, \eqref{Est_2 low}, and \eqref{rhs est low} into \eqref{energy_ineq_1 Step 1 low_} yields
		\begin{equation} \label{energy_ineq_1 Step 1_1 low}
			\begin{aligned}
				&\frac12\nLtwo{\sqrt{\fraka(t)}  \utGalerkin(t)}^2  + \csqhalf\nLtwo{\nabla \uGalerkin(t)}^2  + b
				\Calpha(T) \| \nabla \ptalpha  \uGalerkin \|^2_{\LtwotLtwo}  \\
				\lesssim&\,\begin{multlined}[t] 
					\int_0^t \left\{1+\left\|\frakl\right\|_{\Linf} +\left\|\frakn\right\|^2_{\Lthree}+\left\|\frakat\right\|_{\Linf}\right\}\nLtwo{ \utGalerkin}^2\ds\\+\eps \max_{s \in [0,t]} \|\uGalerkin(s)\|^2_{\Hone}
					+ \|F\|^2_{\LtwoLtwo}
					+ \inttG (c^2 g+ b \ptalpha g) \utGalerkin \dGs .
				\end{multlined}
			\end{aligned}
		\end{equation}
	 Adding the elementary estimate $\|\uGalerkin(t)\|^2_{\Ltwo} \lesssim T \|\utGalerkin\|^2_{\LtwotLtwo}+\|u_0^{(m)}\|^2_{\Ltwo}$ to \eqref{energy_ineq_1 Step 1_1 low},  results in the energy inequality
		\begin{equation} \label{step 1: energy_ineq  low}
			\begin{aligned}
				&\nLtwo{\sqrt{\fraka}  \utGalerkin(t)}^2  + \|\uGalerkin(t)\|^2_{\Hone}  + b
				\Calpha(T) \| \nabla \ptalpha  \uGalerkin \|^2_{\LtwotLtwo} \\
				\lesssimT&\,\begin{multlined}[t] 
					(1+\|\fraka\|_{W^{1, \infty}(\Linf)}+\|\frakl\|_{L^\infty(\Linf)}+\|\frakn\|^2_{L^2(\Lthree)})\| \utGalerkin\|_{\LtwotLtwo}^2\\ +\nLtwo{\uonem}^2 
					+ \|F\|^2_{\LtwoLtwo}
				+\|g\|^2_{H^2(\HneghalfG)}	+ \eps \max_{s \in [0,t]} \|\uGalerkin(s)\|^2_{\Hone}.
				\end{multlined}
			\end{aligned}
		\end{equation}
		for $t \in [0, T]$.  Thanks to the non-degeneracy assumption in \eqref{nondegeneracy assumption frakm frakm}, we have
		\[
		\ulfraka \nLtwo{\uGalerkin_t(t)}^2 \leq \nLtwo{\sqrt{\fraka(t)}  \uGalerkin_t(t)}^2.
		\] 
		 Employing this lower bound in  \eqref{step 1: energy_ineq  low} and taking the maximum over $t \in [0, \sigma]$ for $\sigma \in (0,T)$, we obtain, with sufficiently small $\eps$,  
					\begin{equation} \label{step 1: energy_ineq  low final}
	\begin{aligned}
		&  \|  \utGalerkin(\sigma)\|_{\Ltwo}^2  + \|\uGalerkin(\sigma) \|^2_{\Hone}  + b
		\ulC(T) \| \nabla \ptalpha  \uGalerkin \|^2_{L^2(0, \sigma;\Ltwo)}  \\
		\lesssim&\,\begin{multlined}[t] 
			\nLtwo{\uonem}^2 +
			(1+\|\fraka\|_{W^{1, \infty}(\Linf)}+\|\frakl\|_{L^\infty(\Linf)}+\|\frakn\|^2_{L^2(\Lthree)})\| \utGalerkin\|_{L^2(0,\sigma;\Ltwo)}^2\\
			+ \|F\|^2_{\LtwoLtwo}
			+\|g\|^2_{H^2(\HneghalfG)}.
		\end{multlined}
	\end{aligned}
\end{equation}
Applying \Gronwall's inequality and using the estimate $\|\uonem\|_{\Ltwo} \lesssim \|u_1\|_{\Ltwo} $, we obtain the semi-discrete version of the claimed estimate:
	\begin{equation} \label{final est lower m} 
	\begin{aligned}
		\|\uGalerkin\|^2_{\Xlow} \lesssim &\, \begin{multlined}[t] \Lambda_0(\fraka, \frakl, \frakn)	 \left(	\nLtwo{ u_1}^2 
			+ \|F\|^2_{\LtwoLtwo}
			+\|g\|^2_{H^2(\HneghalfG)} \right)\end{multlined}
	\end{aligned}
\end{equation}
where $\Lambda_0$ is defined in  \eqref{def Lambda_0}. \\[2mm]

		\noindent $\bullet$ \underline{Passing to the limit as $m \rightarrow \infty$}: 	Thanks to the $m$-uniform bound in \eqref{final est lower m}, we may extract subsequences, that we do not relabel, such that
								\begin{equation}
			\begin{aligned}
				\uGalerkin &\overset{\ast}{\rightharpoonup} u \quad  && \text{in} \quad L^\infty(0,T; \Hone), \qquad
				\utGalerkin \overset{\ast}{\rightharpoonup} \ut \quad  \text{in} \quad  L^\infty(0,T; \Ltwo), \\
				\ptalpha \uGalerkin &{\rightharpoonup} \, \zeta \quad  && \text{in} \quad L^2(0,T; \Hone).
			\end{aligned}
		\end{equation}
		We can identify $\zeta= \ptalpha u $ following the arguments in the proof of~\cite[Theorem 6.19, Sec.\ 6.4.2]{jin2021fractional}. 
		 Passing to the limit $m \rightarrow \infty$ in 
		\begin{equation} \label{weak linearized problem low m} 
			\begin{aligned}
			&	\begin{multlined}[t]-	\intTO (\fraka \varphi)_t \, \utGalerkin \dxt +\intTO (\csq \nabla \uGalerkin +b \nabla \ptalpha \uGalerkin)\cdot \nabla \varphi \dxt 
					\\+ \intTO  (\frakl  \utGalerkin+ \frakn \uGalerkin) \varphi \dxt - \intTG (c^2 g+ b \ptalpha g) \varphi \dGt   
				\end{multlined}\\
									=&\,   \intO  \fraka(0)\uonem \varphi(0) \dx+  \intTO F \varphi \dxt
			\end{aligned}
		\end{equation}
		and arguing that $u\vert_{t=0} =0$, we obtain a solution $u$ of \eqref{weak linearized problem low}. 
		\end{proof}
		The previous result ensures existence but does not yet imply uniqueness. Uniqueness follows under higher regularity of the variable coefficients $\fraka$ and $\frakl$.
		\begin{lemma} \label{lemma: uniqueness} 
			Let the assumption of Proposition~\ref{prop: wellp forward lower} hold.	If, in addition, 
			\[
			\fraka \in H^2(0,T; \Lfour) \ \text{ and } \ \frakl \in H^1(0,T; \Lthree),
			\]
			 then the solution of \eqref{weak linearized problem low} is unique.
		\end{lemma}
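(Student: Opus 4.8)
The strategy is the classical Ladyzhenskaya--Lions duality argument for weak solutions of second-order hyperbolic equations, adapted to the fractional damping. By linearity it suffices to show that if $u^{(1)}, u^{(2)} \in \Xlow$ both solve \eqref{weak linearized problem low} for the same data $F$, $g$, $u_1$ and coefficients, then $w := u^{(1)} - u^{(2)}$ vanishes; here $w \in \Xlow$, $w|_{t=0} = 0$, and $w$ satisfies the weak identity with $F$, $g$, $u_1$ all replaced by zero. The naive test function $\varphi = w_t$ is inadmissible since $w_t$ lies only in $L^\infty(0,T;\Ltwo)$. Instead, for each fixed $\sigma \in (0,T]$ we would test with
\[
\varphi_\sigma(t) = \begin{cases} \displaystyle \int_t^\sigma w(r)\,\dd r, & 0 \le t \le \sigma, \\ 0, & \sigma < t \le T, \end{cases}
\]
which belongs to $L^\infty(0,T;\Hone) \cap W^{1,\infty}(0,T;\Ltwo)$, satisfies $\varphi_\sigma(T) = \varphi_\sigma(\sigma) = 0$, and obeys $\partial_t\varphi_\sigma = -w$ on $(0,\sigma)$ (so $\partial_t\varphi_\sigma(0) = -w(0) = 0$ and $\partial_{tt}\varphi_\sigma = -w_t$ there). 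Writing $W(t) := \int_0^t w(r)\,\dd r$, so that $\varphi_\sigma(0) = W(\sigma)$ and $\nabla\varphi_\sigma = \nabla W(\sigma) - \nabla W$ on $(0,\sigma)$, we substitute $\varphi_\sigma$ into the homogeneous weak form. Integrating by parts in time (twice in the inertial term, once in the $\frakl$-term), all boundary contributions at $t=0$ and $t=\sigma$ vanishing, yields
\[
-\int_0^T\!\!\int_\Omega (\fraka\varphi_\sigma)_t\, w_t \,\dxt = \tfrac12\int_\Omega \fraka(\sigma)|w(\sigma)|^2\,\dx + \int_0^\sigma\!\!\int_\Omega \fraka_{tt}\,\varphi_\sigma\, w\,\dxt - \tfrac32\int_0^\sigma\!\!\int_\Omega \fraka_t |w|^2\,\dxt,
\]
together with $\int_0^T\!\int_\Omega \csq\nabla w\cdot\nabla\varphi_\sigma = \tfrac{\csq}{2}\|\nabla W(\sigma)\|_{\Ltwo}^2$ and $\int_0^T\!\int_\Omega \frakl w_t \varphi_\sigma = -\int_0^\sigma\!\int_\Omega \frakl_t \varphi_\sigma w + \int_0^\sigma\!\int_\Omega \frakl|w|^2$. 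The additional regularity $\fraka \in H^2(0,T;\Lfour)$ and $\frakl \in H^1(0,T;\Lthree)$ assumed in the lemma is exactly what makes these integrations by parts licit and the products $\fraka_{tt}\varphi_\sigma w$, $\frakl_t\varphi_\sigma w$ integrable.

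The one genuinely new ingredient is the fractional dissipation $b\int_0^T\!\int_\Omega \nabla\ptalpha w\cdot\nabla\varphi_\sigma$; because we are pairing with $\varphi_\sigma$ rather than with $w$, the coercivity estimate \eqref{coercivity ineq} is not directly available. We would instead use that $w(0)=0$ and that $\ptalpha w$ takes values in $\Hone$ (see \eqref{def Xlow}), so that $w = \Jalpha\ptalpha w$ and hence $\nabla w = \Jalpha(\nabla\ptalpha w)$ with $\nabla\ptalpha w \in L^2(0,T;\Ltwo)$; consequently $\mathcal{J}^{1-\alpha}(\nabla w) = \mathcal{J}^1(\nabla\ptalpha w) = \int_0^\cdot \nabla\ptalpha w(r)\,\dd r$ belongs to $H^1(0,T;\Ltwo)$, has vanishing trace at $t=0$, and $\partial_t\mathcal{J}^{1-\alpha}(\nabla w) = \nabla\ptalpha w$. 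Integrating by parts in time, using $\nabla\varphi_\sigma(\sigma)=0$ and $\partial_t\nabla\varphi_\sigma = -\nabla w$ on $(0,\sigma)$, then gives
\[
b\int_0^\sigma\!\!\int_\Omega \nabla\ptalpha w\cdot\nabla\varphi_\sigma\,\dxt = b\int_0^\sigma\!\!\int_\Omega \mathcal{J}^{1-\alpha}(\nabla w)\cdot\nabla w\,\dxt \ge 0
\]
by the positivity property \eqref{coercivity ineq 2}, applied componentwise with order $1-\alpha \in (0,1)$ and $\nabla w \in L^2(0,\sigma;\Ltwo)$. Hence this term is a nonnegative contribution to the left-hand side and may simply be discarded.

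Combining the above with the non-degeneracy bound $\fraka(\sigma) \ge \ulfraka$ from \eqref{nondegeneracy assumption frakm frakm}, we arrive at
\[
\ulfraka\|w(\sigma)\|_{\Ltwo}^2 + \csq\|\nabla W(\sigma)\|_{\Ltwo}^2 \lesssim \int_0^\sigma\!\!\int_\Omega \bigl(|\fraka_{tt}| + |\frakl_t| + |\frakn|\bigr)|\varphi_\sigma||w|\,\dxt + \int_0^\sigma\!\!\int_\Omega \bigl(|\fraka_t| + |\frakl|\bigr)|w|^2\,\dxt.
\]
Each term on the right is estimated via Hölder's inequality, the embeddings $\Hone\hookrightarrow\Lfour,\Lsix$ (valid for $d\le 3$), and Young's inequality, using $\|\varphi_\sigma(t)\|_{\Hone} \le \|W(\sigma)\|_{\Hone} + \|W(t)\|_{\Hone}$ and $\|W(t)\|_{\Ltwo} \le \int_0^t\|w(r)\|_{\Ltwo}\,\dd r$. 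Choosing the Young parameters small enough to absorb the resulting multiples of $\|\nabla W(\sigma)\|_{\Ltwo}^2$ into the left-hand side, the right-hand side is bounded by $\int_0^\sigma (1+\varrho(t))\bigl(\|w(t)\|_{\Ltwo}^2 + \|W(t)\|_{\Hone}^2\bigr)\,\dd t$ with $\varrho \in L^1(0,T)$ controlled by the time profiles of $\|\fraka_{tt}\|_{\Lfour}$, $\|\frakl_t\|_{\Lthree}$, $\|\frakn\|_{\Lthree}$, $\|\fraka_t\|_{\Linf}$, $\|\frakl\|_{\Linf}$. Setting $E(\sigma) := \|w(\sigma)\|_{\Ltwo}^2 + \|W(\sigma)\|_{\Hone}^2$ and using $\|W(\sigma)\|_{\Ltwo}^2 \le \sigma\int_0^\sigma\|w\|_{\Ltwo}^2\,\dd t$, this reads $E(\sigma) \lesssim \int_0^\sigma (1+\varrho(t)) E(t)\,\dd t$ with $E$ bounded and $1+\varrho \in L^1(0,T)$, so the generalized \Gronwall\ inequality forces $E\equiv 0$, i.e.\ $w\equiv 0$. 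Together with Proposition~\ref{prop: wellp forward lower}, this establishes unique solvability of \eqref{weak linearized problem low} in $\Xlow$.

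The step I expect to be the main obstacle is the fractional damping term: making rigorous the identity $\nabla\ptalpha w = \partial_t\mathcal{J}^{1-\alpha}(\nabla w)$ with vanishing initial trace (immediate for the Galerkin approximants of Proposition~\ref{prop: wellp forward lower} and preserved under the weak limits established there) and combining it with the positivity \eqref{coercivity ineq 2}; the rest is the standard Ladyzhenskaya argument together with careful bookkeeping of the variable-coefficient terms, which is precisely why the higher time regularity of $\fraka$ and $\frakl$ is needed.
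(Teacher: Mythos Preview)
Your proposal is correct and follows essentially the same route as the paper: the Ladyzhenskaya--Lions test function $\varphi_\sigma(t)=\int_t^\sigma w$, integration by parts in the inertial and $\frakl$-terms (this is where the extra $H^2(0,T;\Lfour)$ and $H^1(0,T;\Lthree)$ regularity is used), nonnegativity of the fractional damping via \eqref{coercivity ineq 2}, and a Gr\"onwall argument on $\|w(\sigma)\|_{\Ltwo}^2+\|W(\sigma)\|_{\Hone}^2$. The only cosmetic differences are that the paper establishes the sign of the fractional term by first regularizing $w$ in ${_0}C^1$ and passing to the limit (you instead use the semigroup identity $\mathcal J^{1-\alpha}\nabla w=\mathcal J^1\nabla\ptalpha w$ directly, which is cleaner), and the paper closes with a small-time step plus iteration rather than absorbing $\|\nabla W(\sigma)\|_{\Ltwo}^2$ via a $T$-dependent Young parameter; both variants work.
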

	\begin{proof}
To conclude uniqueness, we show that the homogeneous problem \eqref{weak linearized problem low} with $F=0$, $u_1=0$, and $g=0$ admits only the trivial solution $u=0$. We adapt the argument used in~\cite[Theorem 4, Ch.\ 7.2]{evans2010partial} and for a fixed $0\leq s\leq T$, define
		\begin{equation} \label{def Phi}
			\Phi(t)= \left\{
			\begin{aligned}
				&\int_t^s u(\tau)d \tau,&\quad 0\leq t\leq s, \\
				&0 ,&\quad s\leq t\leq T.
			\end{aligned}
			\right. 
		\end{equation}
	Clearly,
	\[
	\Phi \in L^2(0,T; \Hone) \cap H^1(0,T; \Ltwo), \quad \Phi(s) =0,
	\]
	and differentiation yields $\Phi_t(t) = - u(t)$ for $t \in (0,s)$.
	Using $\Phi$ as the test function in the homogeneous version of \eqref{weak linearized problem low} and  recalling that $u_t(0)=0$, we obtain
		\begin{equation}
			\begin{aligned}
	&	-\int_0^s 
			\int_\Omega \frakm(x,t) \ut\Phi_t\dxt
			+\csq \int_0^s\int_\Omega \nabla u \cdot \nabla\Phi\dxt\\
			 =&\, 	\begin{multlined}[t] \int_0^s\int_\Omega \frakmt u_t\Phi\dt-b \int_0^s\int_\Omega   \nabla \ptalpha u \cdot \nabla \Phi\dxt 
			-\int_0^s \intO (\frakl \ut + \frakn u)\Phi \dxt. 
			\end{multlined}
			\end{aligned}
		\end{equation}
		Substituting $\Phi_t=-u$ on $(0,s)$ yields
		\begin{equation}
			\begin{aligned}
				&\frac{1}{2}\int_0^s 
				\ddt \left( \|\sqrt{\frakm}  u\|_{\Ltwo}^2 
				-\csq \|\nabla \Phi\|_{\Ltwo}^2\right) \dt\\
				=&\,
				\begin{multlined}[t]\frac12 \int_0^s (\frakm_t u,  u)_{\Ltwo}\dt+ \int_0^s\int_\Omega \frakm_t \ut\Phi\dt- b\int_0^s\int_\Omega  \nabla \ptalpha u \cdot \nabla \Phi\dt
					-\int_0^s \intO (\frakl \ut + \frakn u)\Phi \dxt. 
				\end{multlined}
			\end{aligned}
		\end{equation}
		Using $\Phi(s)=0$ and the fact that $u(0)=0$ yields
		\begin{equation}\label{Estimate_1_Unq}
			\begin{aligned}
				&\frac{1}{2}(\|\sqrt{\frakm}  u(s)\|_{\Ltwo}^2+\csq \|\nabla \Phi(0)\|_{\Ltwo}^2)\\
				=&\,\begin{multlined}[t] \int_0^s\int_\Omega \frakm_t \ut\Phi\dt-\int_0^s\int_\Omega  b \nabla \ptalpha u \cdot \nabla \Phi\dt
				+\frac12 \int_0^s (\frakm_t u,  u)_{\Ltwo}\dt\\	-\int_0^s \intO (\frakl \ut + \frakn u)\Phi \dxt.
				\end{multlined}
			\end{aligned}
		\end{equation}
	Moreover, by definition of $\Phi$ and the Cauchy--Schwarz inequality
		\begin{equation}\label{Estimate_2_Unq}
			\begin{aligned}
				\|\Phi(0)\|_{\Ltwo}^2\leq \left(\int_0^s\| u(t)\|_{\Ltwo}\dt\right)^2\leq s \int_0^s \|u(t)\|_{\Ltwo}^2\dt
				\leq T\int_0^s \|u(t)\|_{\Ltwo}^2\dt.
			\end{aligned}
		\end{equation}
		Collecting \eqref{Estimate_1_Unq} and \eqref{Estimate_2_Unq}, and recalling the non-degeneracy assumption \eqref{nondegeneracy assumption frakm frakm}, we obtain 
		\begin{equation}\label{Estimate_3_Unq}
			\begin{aligned}
				&\frac{1}{2}(\| u(s)\|_{\Ltwo}^2+\csq \| \Phi(0)\|_{\Hone}^2)+b \int_0^s\int_\Omega  \nabla \ptalpha u \cdot \nabla \Phi\dt\\
				\lesssim &\, \int_0^s(T^2+\|\frakm_t\|_{\Linf})\|\sqrt{\frakm}  u(t)\|_{\Ltwo}^2\dt
				+\int_\Omega \frakm_t \ut \Phi\dt -\int_0^s \intO (\frakl \ut + \frakn u)\Phi \dxt.
			\end{aligned}
		\end{equation}
We treat next the second integral on the right-hand side.	Integrating by parts in time and using $\Phi(s)=0$, $u(0)=0$, and $\Phi_t = -u$ on $(0,s)$, we obtain 		
		\begin{equation}\label{Secon_Term}
			\begin{aligned}
				\int_0^s\int_\Omega \frakm_t \ut\Phi\dt = \intO \frakmt u \Phi \dx \ \Big \vert_0^s - \int_0^s \intO (\fraka_{tt} \Phi + \frakat(- u)) u \dxt.
			\end{aligned}
		\end{equation}
	Using $\Phi(s)=0=u(0)=0$, the first term on the right vanishes, and we arrive at
		\begin{equation}
			\begin{aligned}
				\int_0^s\int_\Omega \frakat \ut \Phi\dt 
				\lesssim&\, \int_0^s \|\frakm_{tt}\|_{\Lfour} \|\Phi\|_{\Lfour}\|u\|_{\Ltwo}\dt + \int_0^s \|\frakmt\|_{\Linf}\|u\|^2_{\Ltwo}\dt.
			\end{aligned}
		\end{equation}
Defining $U(t) = \int_0^t u(\tau)\, \textup{d}\tau$, and noting that $\Phi(t) = U(s) - U(t)$, we use the Sobolev embedding $H^1(\Omega) \hookrightarrow L^4(\Omega)$ to obtain
		\begin{equation}\label{Estimates_Second_term}
			\begin{aligned}
				&\int_0^s \|\frakm_{tt}(t)\|_{\Lfour} \|\Phi(t)\|_{\Lfour}\|u(t)\|_{\Ltwo}  \dt \\
				\lesssim &\, \int_0^s  \left(\|U(s)-U(t)\|_{\Hone}^2+\|\frakm_{tt}\|_{\Lfour}^2\|u\|_{\Ltwo}^2\right)\dt\\
				\lesssim &\, s\|U(s)\|_{\Hone}^2+\int_0^s  (\|U\|_{\Hone}^2+\|\frakm_{tt}\|_{\Lfour}^2\|u\|_{\Ltwo}^2)\dt.
			\end{aligned}
		\end{equation}
		Similarly, integrating by parts in time, we obtain
		\begin{equation} 
			\begin{aligned} 
					&-\int_0^s \intO (\frakl \ut + \frakn u)\Phi \dxt\\
					=&\,	-\int_0^s \intO  \frakn u \Phi \dxt +\int_0^s \intO    (\frakl_t \Phi -\frakl u ) u \dxt \\
						=&\,	\int_0^s \|\frakn\|_{\Lthree}\| u\|_{\Ltwo} \|\Phi\|_{\Lsix} \dt +\int_0^s    (\|\frakl_t\|_{\Lthree}\| \Phi\|_{\Lsix} +\|\frakl\|_{\Linf}\| u\|_{\Ltwo} )\| u \|_{\Ltwo} \dt \\
					\lesssim&\,	\begin{multlined}[t] \int_0^s  \left(\|\Phi\|^2_{\Hone} +  \left( \|\frakn\|^2_{\Lthree}  +\|\frakl_t\|_{\Lthree}^2+ \|\frakl\|_{\Linf}\right)\| u\|^2_{\Ltwo}\right) \dt.
									\end{multlined}
			\end{aligned}
		\end{equation}
Using the representation $\Phi(t) = U(s) - U(t)$,  we further obtain
				\begin{equation} \label{Estimates_Third_term}
			\begin{aligned} 
				&-\int_0^s \intO (\frakl \ut + \frakn u)\Phi \dxt\\
				\lesssim&\,	 \begin{multlined}[t] s\|U(s)\|_{\Hone}^2 + \int_0^s  \left\{\|U\|^2_{\Hone}  +\left( \|\frakn\|^2_{\Lthree}+\|\frakl_t\|_{\Lthree}^2+ \|\frakl\|_{\Linf}\right)\|u\|^2_{\Ltwo}\right\} \dt.
				\end{multlined}
			\end{aligned}
		\end{equation}
		Inserting the above  estimates  into \eqref{Estimate_3_Unq}, we obtain 
		\begin{equation} \label{est adjoint}
			\begin{aligned}
				&\frac{1}{2}(\| u(s)\|_{\Ltwo}^2+(\csq-C_0 s) \| U(s)\|_{\Hone}^2)+ b \int_0^s\int_\Omega   \nabla \ptalpha u \cdot \nabla \Phi\dt\\
				\lesssim &\, \begin{multlined}[t] \int_0^s(T^2+\|\fraka_t\|_{\Linf}+\|\frakl\|_{\Linf})\|u\|_{\Ltwo}^2 \dt\\
					+\int_0^s  (\|W\|_{\Hone}^2+(\|\frakm_{tt}\|_{\Lfour}^2+\|\frakn\|^2_{\Lthree}+\|\frakl_t\|^2_{\Lthree})\|u\|_{\Ltwo}^2)\dt,
				\end{multlined}
			\end{aligned}
		\end{equation}
		where $C_0$ is the sum of the hidden constants in \eqref{Estimates_Second_term} and \eqref{Estimates_Third_term}.  \\
		\indent It remains to show that the time-fractional term is non-negative. Since we will integrate by parts to prove this, we first regularize. Let $u_n \in {_0C}^1([0,s]; \Hone)$ and define
		$\Phi_n(t) := \int_{t}^{s} u_n(\tau)\textup{d}\tau$. Then
	\begin{equation}
	\begin{aligned}
		b	\int_0^s\int_\Omega   \nabla \ptalpha u_n \cdot \nabla \Phi_n \dt  =&\, 		b	\int_0^s\int_\Omega  \left( J^{-\alpha} \nabla  u_n \right) \cdot \nabla \Phi_n\dt \\
		=&\, 		b	\int_0^s \int_\Omega  (J^{-1} J^{1-\alpha }\nabla  u_n )  \cdot \nabla \Phi_n\dt.
	\end{aligned}
\end{equation}
Since $J^{-1} v= \partial_t v$ for $v \in {_0H^1(0,T)}$ (see~\cite[Sec.\ 2.3]{kubicaTimefractionalDifferentialEquations2020}), integrating by parts in time yields
		\begin{equation} \label{lower bound fract}
			\begin{aligned}
				b	\int_0^s\int_\Omega   \nabla \ptalpha u_n \cdot \nabla \Phi_n\dt 
				=&\, 		b	\int_0^s \int_\Omega   \left(J^{1-\alpha } \nabla  u_n \right) \cdot \nabla u_n \dt \ \geq 0 ,
			\end{aligned}
		\end{equation}
because 
$\Phi_n(s)=0$,  $J^{1-\alpha}\nabla u_n(0)=0$, and inequality \eqref{coercivity ineq 2} holds.  By density, \eqref{lower bound fract} extends to $u \in \Hsubalpha(0,s; \Hone)$.
		Thus from \eqref{est adjoint} we have 
		\begin{equation} \label{est adjoint2}
			\begin{aligned}
				&\frac{1}{2}\|\sqrt{\frakm}  u(s)\|_{\Ltwo}^2+(\csq-c_0 s) \| U(s)\|_{\Hone}^2 \\
				\lesssim &\, \begin{multlined}[t] \int_0^s(T^2+\|\fraka_t\|_{\Linf}+\|\frakl\|_{\Linf})\|\sqrt{\frakm}  u( t)\|_{\Ltwo}^2 \dt\\
					+\int_0^s  (\|U(t)\|_{\Hone}^2+(\|\frakm_{tt}\|_{\Lfour}^2+\|\frakn\|^2_{\Lthree}+\|\frakl_t\|^2_{\Lthree})\|\sqrt{\frakm}u\|_{\Ltwo}^2)\dt,
				\end{multlined}
			\end{aligned}
		\end{equation}
		from which we can conclude via \Gronwall's inequality that $u \equiv 0$ on $(0,s)$ for small enough $s$. We can apply the same arguments starting from time $s$ to extend the conclusion to $[0,T]$.
	\end{proof}

		\subsection{Analysis of a linearized wave problem:  Higher-regularity regime}		\label{sec: analysis higher order}				
				\indent To prove the well-posedness of the nonlinear state problem, we need to employ a higher-order testing procedure for the linearized problem that mimics the one used in the analysis of a linearized Westervelt equation with time-fractional attenuation in the setting of homogeneous Dirichlet data in~\cite{baker2024numerical}.  However, to be in a position to employ an analogous testing procedure to the Dirichlet case, we need to devise a suitable Neumann extension of the boundary data and at first work with smoother data $\greg$. We can then establish the required energy estimates on the regularized solutions, and afterward pass to the limit $\nu \rightarrow \infty$.  The following result will be used for that purpose. Throughout this and upcoming sections, we assume $\Omega \subset \R^d$, where $d \in \{1,2,3\}$ is a bounded  domain with sufficient regularity so that the following elliptic regularity property holds:
				\begin{equation} \label{assumption Omega}
					\text{	if $-\Delta z + z \in \Htwo$ \  and \ $\dfrac{\partial z}{\partial n} \in H^{5/2}(\partial \Omega)$, \ then $z \in H^4(\Omega)$.}
				\end{equation}
				We refer to, e.g.,~\cite[Theorem 2.5.1.1]{grisvard2011elliptic} for assumptions on $\Omega$ under which \eqref{assumption Omega} holds. 
\begin{lemma}[Compatible regularized boundary data] \label{lemma: Compatible reg data}					
	Let $\Gamma = \partial \Omega$, where $\Omega \subset \R^d$, $d \in \{1,2,3\}$, is a bounded domain that satisfies \eqref{assumption Omega}. Furthermore, let
	\begin{equation} \label{space g low}
	g \in  \spaceglower = \Hsubalpha(0,T; \HthreehalfG) \cap  {H}^1(0,T; \HonehalfG)   \cap H^2(0,T; \HneghalfG),
\end{equation}
	 and assume the compatibility conditions 
	\begin{equation}   \label{compatibility conditions}
			g(0)=0 \ \text{in} \ \ \HonehalfG, \qquad g_t(0)=0 \ \text{in} \ \ \HneghalfG. 
	\end{equation}
Then there exists a family $ \{\greg\}_{\nu} $ with 
	\[
	 \greg \in C^2([0,T]; \HfivehalfG), \quad  \greg(0)=\greg_t(0)=0,
	 \]
 such that, as $\nu \rightarrow \infty$,
\begin{equation} \label{g limits}
	\begin{aligned}
		 \greg &\rightarrow g &&\text{in } &&\Hsubalpha(0,T; \HthreehalfG), \qquad 	 \greg_t \rightarrow g_t \quad \text{in } \ L^2(0,T; \HonehalfG), \\
		 \greg_{tt} &\rightarrow g_{tt}  &&\text{in } &&L^2(0,T; \HneghalfG).
	\end{aligned}
\end{equation}
\end{lemma}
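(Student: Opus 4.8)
The plan is to construct $\greg$ by mollifying $g$ in time, after extending $g$ suitably beyond the interval $[0,T]$, and to verify that the compatibility conditions \eqref{compatibility conditions} are exactly what is needed to preserve the vanishing of $\greg$ and $\greg_t$ at $t=0$ under this procedure. Concretely, I would first extend $g$ from $(0,T)$ to a slightly larger interval $(-\delta, T)$ by reflection or by a linear/Taylor extension near $t=0$ that respects the two compatibility conditions — e.g. set $\tilde g(t) = 0$ for $t \le 0$, which is consistent with $g(0)=0$ in $\HonehalfG$ and $g_t(0) = 0$ in $\HneghalfG$, so that the extension remains in the relevant Bochner spaces on $(-\delta, T)$ without creating jumps in $g$ or $g_t$ at the origin. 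One should check that this zero extension lies in $\Hsubalpha(-\delta, T; \HthreehalfG)$ (using the characterization of $\Hsubalpha$ via $\Jalpha$ and the fact that $g(0)=0$ handles the $\alpha > 1/2$ case, with the weighted $L^2$ condition at $\alpha = 1/2$ inherited from $g \in \Hsubalpha$), in $H^1(-\delta,T;\HonehalfG)$, and in $H^2(-\delta,T;\HneghalfG)$.

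Next I would mollify in time: let $\greg = \rho_\nu * \tilde g$ with a standard mollifier $\rho_\nu$ of width $1/\nu$, restricted back to $[0,T]$. Since $\tilde g \equiv 0$ on $(-\delta,0]$, for $\nu$ large enough $\greg(0) = \greg_t(0) = 0$, and $\greg$ is smooth in time with values in whatever spatial space $g$ takes values in. To land in $C^2([0,T]; \HfivehalfG)$ rather than merely $C^2([0,T];\HthreehalfG)$, I would compose the temporal mollification with a spatial smoothing step: apply the resolvent $(\mathrm{I} - \tfrac1\nu \Delta_\Gamma)^{-1}$ (or a spectral truncation in the Laplace–Beltrami eigenbasis on $\Gamma$) so that the boundary trace gains the extra two derivatives on $\Gamma$, while commuting with $\partial_t$ and converging to the identity strongly on each of $\HthreehalfG$, $\HonehalfG$, $\HneghalfG$. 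Because both smoothings commute with time-reversal and with $\Jalpha$, the vanishing traces at $t=0$ are preserved and the convergences \eqref{g limits} follow from: (a) strong continuity of translation/mollification on $L^2(0,T;X)$ for a Banach space $X$ and the identity $\|\greg\|_{\Hsubalpha(0,T;X)} = \|\ptalpha \greg\|_{L^2(0,T;X)}$ together with $\ptalpha$ commuting with temporal mollification on $\Hsubalpha$; (b) strong convergence of the spatial smoother to the identity on each space.

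The main obstacle will be the $\Hsubalpha(0,T;\HthreehalfG)$-convergence, i.e. controlling the fractional-in-time norm under the zero extension and mollification simultaneously. The subtlety is that $\Hsubalpha$ is not simply $H^\alpha$ with a boundary condition unless $\alpha \ne 1/2$, and the zero-extension of a $\Hsubalpha(0,T;X)$ function to $(-\delta,T)$ need not automatically lie in $\Hsubalpha(-\delta,T;X)$ — this is precisely where the hypothesis $g(0) = 0$ in $\HonehalfG$ (which controls the trace in the interpolation space between $\HthreehalfG$ and $\HneghalfG$ that governs pointwise values of $\Hsubalpha$-in-time functions) is essential. I would handle this by working on the integrated side: write $g = \Jalpha h$ with $h = \ptalpha g \in L^2(0,T;\HthreehalfG)$, extend $h$ by zero to get $\tilde h \in L^2(-\delta,T;\HthreehalfG)$, note $\Jalpha$ of the extension still reproduces $g$ on $(0,T)$ with the correct vanishing trace, mollify $\tilde h$, and set $\greg$'s fractional derivative to be the (spatially smoothed) mollification of $\tilde h$; then $\|\greg - g\|_{\Hsubalpha(0,T;\HthreehalfG)} = \|\text{(mollified, smoothed }\tilde h) - h\|_{L^2(0,T;\HthreehalfG)} \to 0$ by the standard arguments. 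The remaining convergences in $H^1$ and $H^2$ in time are then routine since on those scales $\greg$ is an ordinary mollification of an $H^1$- resp. $H^2$-in-time function whose relevant traces vanish, so no fractional-order difficulty arises.
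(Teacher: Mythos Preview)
Your central claim --- that with the zero extension $\tilde g$ on $(-\delta,0]$ and a standard mollifier $\rho_\nu$, the mollified function satisfies $\greg(0)=\greg_t(0)=0$ for $\nu$ large --- is false. A symmetric mollifier at $t=0$ sees values of $\tilde g$ on $(0,1/\nu)$ as well as on $(-1/\nu,0)$, so
\[
(\rho_\nu * \tilde g)(0)=\int_{0}^{1/\nu}\rho_\nu(-s)\,g(s)\ds,
\]
which is generically nonzero even when $g(0)=g_t(0)=0$ (take $g(t)=t^2$). The same holds for the time derivative. So your construction does not yield exact compatibility at $t=0$, which is the whole point of the lemma.

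This is precisely the difficulty the paper's proof is built around. After time mollification and spatial regularization one obtains $\bar g^{\,\eps}\in C^2([0,T];H^{5/2}(\Gamma))$ with $\bar g^{\,\eps}(0)$ and $\bar g^{\,\eps}_t(0)$ small but not zero, and the paper then applies an explicit Milani-type correction
\[
\greg(t)=\bar g^{\,\eps}(t)-\bar g^{\,\eps}(0)-t\,\rho(R_\nu t)\,\bar g^{\,\eps}_t(0),
\]
with a compactly supported cutoff $\rho$ and a carefully chosen scale $R_\nu=\nu\bigl(1+\|\bar g^{\,\eps}_t(0)\|_{H^{3/2}(\Gamma)}^2\bigr)$. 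The work is in showing that this correction vanishes in all three norms simultaneously: the $R_\nu$-dependence gives enough decay for the $\Hsubalpha(0,T;H^{3/2})$ and $L^2(0,T;H^{1/2})$ norms even though $\|\bar g^{\,\eps}_t(0)\|_{H^{3/2}}$ is not small, while for the $H^2(0,T;H^{-1/2})$ norm one uses that $\bar g^{\,\eps}_t(0)\to 0$ in $H^{-1/2}(\Gamma)$ (a consequence of the compatibility $g_t(0)=0$ in $H^{-1/2}$). Your proposal misses this correction step entirely.

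A one-sided (causal) mollifier would restore $\greg(0)=\greg_t(0)=0$, but then your two descriptions of the approximant --- direct mollification of $\tilde g$ versus defining $\greg$ through mollification of $h=\ptalpha g$ --- are different functions, and you would still have to show that a \emph{single} $\greg$ converges in all three topologies. As written, the argument has a genuine gap at the compatibility step.
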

\begin{proof}
We conduct the proof by regularizing the function $g$ 
 and then employing a correction to guarantee homogeneous conditions at time zero, building upon the ideas in~\cite[Theorem 4]{milani1995compatible}.  \\[2mm]
\noindent $\bullet$ \underline{Regularization in time}:	We construct the family of mollifiers in time in the usual way; see~\cite[Ch.\ 1.4]{cherrier2012linear} and~\cite[Sec.\ 2.28]{adams2003sobolev}. Let $\eta \in C_0^\infty(\R)$ be the non-negative function supported in $\{|t| \leq 1\}$:
	\begin{equation}
		\eta(t) = \begin{cases}
			C_0 \exp{ \left(- \frac{1}{1-|t|^2}\right)} \quad &\text{if  }|t|<1, \\
			0 \quad &\text{if  } |t| \geq 1,
		\end{cases} 
	\end{equation}
	with $C_0$ chosen so that $\int_{-\infty}^\infty  \eta(t) \dt =1$. For $\eps>0$, we set
	\begin{equation}
		\etaeps(t) = \frac{1}{\eps} \eta\left(\frac{t}{\eps}\right).
	\end{equation}
	Let $\tildeg$ denote the extension of $g$ by zero to a function in $\R$ and introduce the time-mollification defined by
	\begin{equation}
		\tildeg^\eps(t) = (\etaeps * \gtilde)(t) =  \frac{1}{\eps}\int_{-\infty}^\infty \eta\left(\frac{t-\tau}{\eps}\right)\gtilde(\tau) \, \textup{d}\tau.
	\end{equation}
	Then $\tildeg^\eps \in C^\infty(\R; \HthreehalfG)$ and, setting $\geps= \tildeg^\eps_{\vert [0,T]}$, we have 
\begin{equation} \label{geps limits}
	\begin{aligned}
		\geps &\rightarrow g &&\text{  in  } &&\Hsubalpha(0,T; \HthreehalfG), \quad && \geps_t\rightarrow g_{t} &&\text{  in  } &&L^2(0,T; \HonehalfG), \\
			\geps_{tt} &\rightarrow g_{tt} &&\text{  in  } &&L^2(0,T; \HneghalfG), \quad && \geps_t(0) \rightarrow 0 &&\text{  in  } &&\HneghalfG,
	\end{aligned}
\end{equation}
as $\eps \searrow 0$.\\[2mm]
\noindent $\bullet$ \underline{Regularization in space}: By exploiting the fact that the space
$C^2([0,T]; \HfivehalfG)$ \sloppy  is dense in  $ C^2([0,T]; \HthreehalfG)$, we obtain a family $\{\gepstilde\}_{\nu \geq 1} \subset C^2([0,T]; \HfivehalfG)$ that converges to $g$ in the sense of \eqref{geps limits}. \\[2mm]
\noindent $\bullet$ \underline{Correction of regularized functions}:	We next introduce a correction to regularized functions in order to satisfy zero initial conditions:
	\begin{equation} \label{choice reg g}
	\greg(x,t)= \gepstilde(x,t)- \gepstilde(0)- t\rho(\Rnu t)   \gepstilde_t(0),
\end{equation}
where $\rho \in C_0^\infty(\R)$ is equal to $1$ in a neighborhood of zero and such that
\[
\int_0^\infty (s \rho(s)+ (1+s^2) \rho'(s) + s \rho''(s)) \ds \leq C;
\]
we include the construction of $\rho$ in Lemma~\ref{lemma: constructing r} in Appendix~\ref{appendix: density results} for completeness. Furthermore, we set
\begin{equation}
	\begin{aligned}
		\Rnu = \nu \left(1+ \|\gepstilde_t(0)\|^2_{\HthreehalfG}\right),
	\end{aligned}
\end{equation}
and 
$\eps= \frac{1}{\nu}$. The reason for including $1+ \|\gepstilde_t(0)\|^2_{\HthreehalfG}$ in the definition of $\Rnu$ is that the resulting terms involving $\gepstilde_t(0)$ below are then uniformly bounded; cf.~\eqref{conv reg first der}. From  \eqref{choice reg g}, we find that
\begin{equation} 
	\begin{aligned}
		\gregt=  \gepstilde_t - \chi_1(\Rnu t)\gepstilde_t(0),\quad
		\gregtt=&\,  \gepstilde_{tt}- \chi_2(\Rnu t)\gepstilde_t(0),	
	\end{aligned}
\end{equation}
where we have introduced the functions
\begin{equation} \label{def chi1 chi2}
	\begin{aligned}
		\chi_1(\Rnu t)=  \rho(\Rnu t)+  \Rnu t \rho'(\Rnu t), \quad
		\chi_2 (\Rnu t) = 2 \Rnu  \rho'(\Rnu t) + \Rnu^2 t \rho''(\Rnu t),
	\end{aligned}
\end{equation}
which correspond to the first and second derivatives of $\rho(\Rnu t)$, respectively. 
We see that the regularized functions satisfy compatibility conditions, that is, $\greg(0)  =0$ and  $\gregt(0) = 0$. \\
\indent  It remains to to establish the convergence of $\greg-\gepstilde$ and its derivatives to zero in suitable norms. Below $C_{1}, \ldots, C_4>0$ denote generic constants that do not depend on $\nu$. To establish the desired convergence, it helps to note that
\begin{equation} \label{conv chi1}
	\begin{aligned}
		\int_0^T |\ \chi_1(\Rnu t) |^2 \dt  \lesssim &\, 	\int_0^T ( t \rho(\Rnu t)+  \Rnu t^2 \rho'(\Rnu t))^2 \dt  \\
		\leq &\, \frac{1}{\Rnu^3} \int_0^\infty \left( s \rho(s)  +  s^2 \rho'(s)\right)^2 \ds \leq \frac{C_1}{\Rnu^3}. 
	\end{aligned}
\end{equation}
     We then find that
\begin{equation}  \label{conv reg first der}
	\begin{aligned}
		\| \gepstilde_t- \gregt\|_{L^2(\HonehalfG)} 
		\leq&\, 
		 \| \chi_1(\Rnu t)\|_{\LtwoT}\|\gepstilde_t(0) \|_{\HonehalfG} \\
		 \leq&\, C_2 \frac{1}{\nu^{3/2}}\frac{ \|\gepstilde_t(0)\|_{\HthreehalfG}}{\left(1+ \|\gepstilde_t(0)\|^2_{\HthreehalfG}\right)^{3/2}}
		 \leq C_3 \frac{1}{\nu^{3/2}}	 \rightarrow 0
	\end{aligned}
\end{equation}
as $\nu \rightarrow \infty$. Next, we have
\begin{equation}
	\begin{aligned}
		\|\ptalpha (\gepstilde- \greg)\|_{L^2(\HthreehalfG)} 
		= \begin{multlined}[t]  \|\ptalpha \left( \chi_1(\Rnu t)  \right) \gepstilde_t(0)\|_{L^2(\HthreehalfG)}. 
		\end{multlined}
	\end{aligned}
\end{equation}
Since we can write the fractional derivative term as the temporal convolution
\begin{equation}
	\ptalpha \left(t \rho(\Rnu t)\right)=  \frakKonenegalpha* \chi_1(\Rnu t), 
\end{equation}
an application of Young's convolution inequality yields
\begin{equation}
	\begin{aligned}
		\|\ptalpha \left( \chi_1(\Rnu t)\right) \gepstilde_t(0)\|_{L^2(\HthreehalfG)} 
		\leq&\, \|\frakKonenegalpha\|_{\LoneT}\| \chi (\Rnu t)\|_{\LtwoT} \|\gepstilde_t(0)\|_{\HthreehalfG} 
		\rightarrow\, 0
	\end{aligned}
\end{equation}
as $\nu \rightarrow \infty$, similarly to before. Therefore, we have $\|\ptalpha (\gepstilde-\greg)\|_{L^2(\HthreehalfG)}  \rightarrow 0$, which implies $\| \gepstilde- \greg\|_{\Hsubalpha(\HthreehalfG)}  \rightarrow 0$ as $\nu \rightarrow \infty$. \\
\indent Finally, we consider the convergence of $\gregtt$. Recalling \eqref{def chi1 chi2}, we have
\begin{equation}
	\begin{aligned}
		\int_0^T |\chi_2(\Rnu t)|^2 \dt  \lesssim \int_0^\infty (\rho'(s) + s \rho''(s))^2 \ds \leq C_4.
	\end{aligned}
\end{equation}
Since $\| \gepstilde_t(0)\|_{\HneghalfG} \rightarrow 0$ as $\nu=\frac{1}{\eps} \rightarrow \infty$,
we can conclude that 
\begin{equation}
	\begin{aligned}
		\int_0^T |\chi_2(\Rnu t)|^2 \dt   \cdot \|\gepstilde_t(0)\|^2_{\HneghalfG}  \rightarrow 0
	\end{aligned}
\end{equation}
as $\nu \rightarrow \infty$. This in turn allows us to infer that
\begin{equation} \label{conv reg second der}
	\begin{aligned}
		\|\gepstilde_{tt}- \greg_{tt}\|_{L^2(\HneghalfG)} 
		\leq \begin{multlined}[t]  \|\chi_2(\Rnu t)\|_{\LtwoT}  \|  \gepstilde_t(0)  \|_{\HneghalfG} \rightarrow 0,
		\end{multlined}
	\end{aligned}
\end{equation}
as $\nu \rightarrow \infty$, which completes the proof.
\end{proof}
Equipped with the previous density property, we now analyze the linearized wave problem in a higher-regularity setting. This additional smoothness will later allow us to employ the result to show the well-posedness of the nonlinear state problem.
\begin{proposition} \label{prop: wellp forward linear}
Let $T>0$, $c>0$,  $b \in (0,  \bar{b}]$ for some $\bar{b}>0$,
 and $\alpha \in (0,1)$. Assume that the variable coefficients in \eqref{linearized problem} satisfy
								\begin{equation}
									\begin{aligned}
										&\fraka \in \Xfrakm = L^\infty(0,T; \Wtwothree \cap \Woneinf) \cap W^{1,\infty}(0,T; \Linf) ,\\
										& \frakl \in \Xfrakl = L^\infty(0,T; \Htwo), \qquad
										\frakn \in \Xfrakn= L^\infty(0,T; \Htwo),  
									\end{aligned}
								\end{equation}
								and that the non-degeneracy condition \eqref{nondegeneracy assumption frakm frakm} holds. 
								Furthermore, assume that $u_1=0$, $F \in  L^2(0,T; \Htwo)$, and
	\begin{equation}  \label{def spaceg}
	\begin{aligned}
	g \in 	\spaceg =\begin{multlined}[t]  L^\infty(0,T; \HthreehalfG) \cap   \Hsubalpha(0,T; \HthreehalfG) \cap W^{1, \infty}(0,T; \HonehalfG)\\ \cap H^2(0,T; \HneghalfG)
		\end{multlined}
	\end{aligned}
\end{equation}
together with the compatibility conditions \eqref{compatibility conditions}.
Then there exists a unique solution $u $ of \eqref{linearized problem}, such that				
\begin{equation} \label{def_Xp}
	\begin{aligned}
	u \in	\Xp
		= \bigl\{ p \in L^{\infty}(0, T; \Hthree):&\  \pt \in  L^\infty(0, T;  \Htwo),\ \ptalpha p \in L^2(0,T; \Hthree)\\
		&\,								\ptt \in   L^2(0,T; \Hone) \bigr\}.
	\end{aligned}
\end{equation}
 Moreover, the solution satisfies the estimate
\begin{equation} \label{energy est forward}
									\begin{aligned}
										\|u\|^2_{\Xp} \lesssim &\, \begin{multlined}[t]		 \Lambda(\frakm, \frakn, \frakl)\left(
											\|F\|^2_{L^2(\Htwo)}+\|g\|^2_{\spaceg} \right), \end{multlined}
									\end{aligned}
								\end{equation}
								where 
								\begin{equation} \label{def Lambda}
									\begin{aligned}
										\Lambda(\fraka, \frakn, \frakl)=&\, \begin{multlined}[t]
											\calL^2_{\fraka, \frakn, \frakl}	\exp  \left\{ CT \calL^6_{\fraka, \frakn, \frakl} \right\}, \quad 	\calL_{\fraka, \frakn, \frakl} =  1+\|\fraka\|_{\Xfrakm}+\|\frakn\|_{\Xfrakn}+\|\frakl\|_{\Xfrakl},
										\end{multlined}
									\end{aligned}
								\end{equation}
								where the hidden constants are independent of $b$ and $\alpha$.
\end{proposition}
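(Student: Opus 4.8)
The plan is to combine the lower-regularity existence theory of Proposition~\ref{prop: wellp forward lower} with a higher-order Faedo--Galerkin argument, carried out first on smoothed boundary data and on a homogenized form of \eqref{linearized problem}, and only then to pass to the limit in the regularization. I would proceed in four stages: (i) replace $g$ by the smooth, compatible family $\greg$ supplied by Lemma~\ref{lemma: Compatible reg data}; (ii) subtract a suitable elliptic extension of $\greg$ into $\Omega$ to reduce to a problem with \emph{homogeneous} Neumann data, so that the Dirichlet-type testing procedure of~\cite{baker2024numerical} can be mimicked; (iii) run the Galerkin scheme in the Neumann--Laplacian eigenbasis \eqref{bases_Lapl} and derive higher-order energy estimates uniform both in the discretization index $m$ and in $\nu$; (iv) pass $m\to\infty$ to obtain a solution $\ureg\in\Xp$ of the problem with data $\greg$, and then $\nu\to\infty$. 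Uniqueness is settled separately by a direct energy argument.

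\textbf{Reduction to homogeneous Neumann data.} Given $\greg\in C^2([0,T];\HfivehalfG)$ from Lemma~\ref{lemma: Compatible reg data}, define the Neumann extension $\wreg=\Neumannextension\greg$ as the solution of $-\Delta\wreg+\wreg=0$ in $\Omega$, $\partial_n\wreg=\greg$ on $\Gamma$; by the elliptic regularity hypothesis \eqref{assumption Omega} one has $\wreg\in C^2([0,T];\Hfour)$, and $\wreg(0)=\wregt(0)=0$ since $\greg(0)=\gregt(0)=0$. Differentiating the elliptic equation in time gives $-\Delta\wregt+\wregt=0$ and $-\Delta\wregtt+\wregtt=0$, that is $\Delta\partial_t^j\wreg=\partial_t^j\wreg$ for $j=0,1,2$. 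The shifted unknown $\tilde u=\ureg-\wreg$ then solves \eqref{linearized problem} with homogeneous Neumann data, zero initial data, and right-hand side
\[
	\tilde F=F-\fraka\wregtt+\csq\wreg+b\,\ptalpha\wreg-\frakl\wregt-\frakn\wreg,
\]
where I have also used $\Delta\ptalpha\wreg=\ptalpha\wreg$, valid because $\wreg(0)=0$. The identities $\Delta\partial_t^j\wreg=\partial_t^j\wreg$ are exactly what keep $\tilde F$ controllable, in the norm demanded by the energy estimates, by $\|g\|_{\spaceg}$ alone: although $\gregtt$ lies only in $\HneghalfG$, so that $\wregtt$ is merely $\Hone$-regular in space, the potentially dangerous term $\fraka\wregtt$ only ever sees $\Delta\wregtt=\wregtt$, which costs no spatial derivatives. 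Accordingly the surplus $C^2([0,T];\HfivehalfG)$-smoothness of $\greg$ serves only to legitimize the Galerkin computations at fixed $\nu$, while all quantitative bounds remain $\nu$-uniform (using Lemma~\ref{lemma: Compatible reg data} for the convergent components of $\greg$ and standard mollification estimates for the remaining $L^\infty$-type norms).

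\textbf{Higher-order energy estimates.} On the homogenized problem I would test the Galerkin identity successively with $v^{(m)}=\tilde u^{(m)}_t$, with $v^{(m)}=-\Delta\tilde u^{(m)}_t$, and with $v^{(m)}=(-\Delta+I)^2\tilde u^{(m)}_t$; since the eigenfunctions in \eqref{bases_Lapl} satisfy $\partial_n\Delta^k\phi^{(m)}=0$ for every $k$, these are admissible and integration by parts produces no boundary contributions, exactly as in the homogeneous-Dirichlet case. At the three levels the fractional terms are handled by the coercivity inequality \eqref{coercivity ineq} applied to $\nabla\tilde u$, to $\Delta\tilde u$, and to $\nabla\Delta\tilde u$ --- legitimate because $\tilde u(0)=0$ forces all its spatial derivatives to vanish at $t=0$ --- contributing the nonnegative quantities $b\,\ulC(T)\|\nabla\ptalpha\tilde u\|^2_{L^2_t(\Ltwo)}$, $b\,\ulC(T)\|\Delta\ptalpha\tilde u\|^2_{L^2_t(\Ltwo)}$ and $b\,\ulC(T)\|\nabla\Delta\ptalpha\tilde u\|^2_{L^2_t(\Ltwo)}$, which together control $\ptalpha\tilde u$ in $L^2(0,T;\Hthree)$. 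The variable-coefficient terms --- in particular the commutators $[\Delta,\fraka]$, $[\Delta^2,\fraka]$ and the products $\Delta(\frakl\tilde u_t)$, $\Delta(\frakn\tilde u)$ --- are controlled using $\fraka\in L^\infty(0,T;\Wtwothree\cap\Woneinf)\cap W^{1,\infty}(0,T;\Linf)$ and $\frakl,\frakn\in L^\infty(0,T;\Htwo)$ together with the $d\le3$ embeddings $\Htwo\hookrightarrow\Linf$ and $\Hone\hookrightarrow\Lsix$, while $\tilde u_{tt}$ is recovered algebraically from the PDE, $\fraka\tilde u_{tt}=\csq\Delta\tilde u+b\Delta\ptalpha\tilde u-\frakl\tilde u_t-\frakn\tilde u+\tilde F$, upon dividing by $\fraka$ and invoking the lower bound $\ulfraka$ in \eqref{nondegeneracy assumption frakm frakm}. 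The one genuinely circular point is that $\nabla\tilde u_{tt}$, entering against $\Delta\tilde u_t$ at the top level, itself involves $\nabla\Delta\tilde u$ and $\nabla\Delta\ptalpha\tilde u$; these are absorbed into the left-hand side by Young's $\eps$-inequality. Combining the three levels with elliptic regularity, absorbing the $\eps$-terms, and invoking \Gronwall's inequality then yields $\|\tilde u^{(m)}\|^2_{\Xp}\lesssim\Lambda(\fraka,\frakn,\frakl)\bigl(\|F\|^2_{\LtwoHtwo}+\|\greg\|^2_{\spaceg}\bigr)$ with $\Lambda$ of the form \eqref{def Lambda} --- the $\calL^6$ in the exponent coming from chaining the three estimates, and the prefactor $\calL^2$ from the algebraic recovery of $\tilde u_{tt}$ and from $\|\tilde F\|_{\LtwoHtwo}\lesssim\|F\|_{\LtwoHtwo}+\calL_{\fraka,\frakn,\frakl}\|\greg\|_{\spaceg}$. (The $\alpha$-independence of the constants follows from the uniform lower bound $\Calpha(T)\geq\ulC(T)$ and $\|\frakKonenegalpha\|_{\LoneT}\leq C$.)

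\textbf{Passage to the limit, uniqueness, and the main obstacle.} The $m$-uniform bound permits extraction of weak-$*$ limits and identification of $\ureg=\tilde u+\wreg\in\Xp$ as a solution of \eqref{linearized problem} with data $\greg$ satisfying the above estimate (the weak limit of $\ptalpha\uGalerkin$ being identified with $\ptalpha\ureg$ as in~\cite{jin2021fractional}); the $\nu$-uniformity then gives $\ureg\overset{\ast}{\rightharpoonup}u\in\Xp$, and since $\greg\to g$ in $\Hsubalpha(0,T;\HthreehalfG)$ implies $\greg\to g$ and $\ptalpha\greg\to\ptalpha g$ in $L^2(0,T;\LtwoG)$, the boundary term $\intTG(\csq\greg+b\,\ptalpha\greg)\varphi\dGt$ passes to the limit, so $u$ solves \eqref{linearized problem} with data $g$ and \eqref{energy est forward} follows by weak lower semicontinuity. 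For uniqueness, the difference $\bar u$ of two $\Xp$-solutions satisfies $\partial_n\bar u=0$, $\bar u(0)=\bar u_t(0)=0$, with $\bar u_t\in L^\infty(0,T;\Htwo)$ and $\bar u_{tt}\in L^2(0,T;\Hone)$, so it may be tested against $\bar u_t$ directly; the fractional term is nonnegative by \eqref{coercivity ineq}, and \Gronwall's inequality --- using $\frakat,\frakl,\frakn\in L^\infty(\Linf)$ --- forces $\bar u\equiv0$ (note that Lemma~\ref{lemma: uniqueness} does not literally apply, since $\Xfrakm$ and $\Xfrakl$ bound only one time-derivative of the coefficients, but the higher spatial regularity of $\Xp$-solutions makes this direct argument available). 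I expect stage (iii) to be the crux: closing the three-level energy hierarchy in the presence of the sub-parabolic fractional dissipation --- which forces the top-order terms to be carried through all levels rather than absorbed early --- while keeping the estimate simultaneously $m$- and $\nu$-independent; the companion difficulty is building the Neumann extension with time-regularity matched, order by order, to the spaces constituting $\spaceg$, which is precisely what Lemma~\ref{lemma: Compatible reg data} together with hypothesis \eqref{assumption Omega} and the identities $\Delta\partial_t^j\wreg=\partial_t^j\wreg$ are designed to deliver.
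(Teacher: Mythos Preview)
Your proposal is correct and follows essentially the same approach as the paper: regularize $g$ via Lemma~\ref{lemma: Compatible reg data}, homogenize through the Neumann extension $-\Delta G+G=0$ (exploiting $\Delta\partial_t^jG=\partial_t^jG$ exactly as you describe), run a three-level Galerkin energy argument in the Neumann--Laplacian eigenbasis, and pass $m\to\infty$ then $\nu\to\infty$; uniqueness via direct testing with $\bar u_t$ is also what the paper does. The only cosmetic difference is the choice of top-level test function: the paper applies $\Delta$ to the equation and tests with $\Delta w_t^{(m)}$, whereas you test with $(-\Delta+I)^2\tilde u_t^{(m)}$, but these differ only by lower-order terms already controlled at the previous levels.
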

\begin{proof}
We conduct the proof through several steps. In the first step, we construct a Neumann extension of the boundary data, which allows us to reformulate the problem with homogeneous boundary and initial conditions. \\[2mm]								
								\noindent \underline{Step I: Neumann extension}. In the spirit of, e.g.,~\cite{kaltenbacher2011well, lasiecka1993uniform, bucci2019feedback}, where integer-order wave problems are analyzed, we introduce the following Neumann extension of $g$ 
								\begin{equation} \label{Neumann extension}
								\begin{aligned}
										-\Delta G+G = 0, \qquad
										  \frac{\partial G}{\partial n}=g
									\end{aligned}	
								\end{equation}
								pointwise in time. It is known by elliptic theory that $\Neumannextension: g \mapsto  G$ is a linear bounded mapping:
								\begin{equation} \label{extension}
									\Neumannextension: H^{s}(\Gamma)\rightarrow H^{s+\frac{3}{2}}(\Om),\qquad s\in \R
								\end{equation}
								at each $t \in [0, T]$;  see, e.g.,~\cite{Trigiani_1989, LASIECKA_1991}. Note that since $\Neumannextension$ is linear, it holds that $\partial_t \Neumannextension(\greg)=\Neumannextension(\partial_t \greg)$. By the compatibility conditions in \eqref{compatibility conditions}, 
								we conclude that $G(0)=\Gt(0)=0$. \\
								\indent We can then consider the following initial boundary-value problem for 
								\[
								w \coloneqq u- G
								\]
								 with homogeneous boundary and initial data:
								\begin{equation}\label{linearized problem homogen} \tag{IBVP$_\text{lin}^{\text{hom}}$}
									\left\{	\begin{aligned}
										&\ \frakm(x,t) \wtt - \csq \Delta w-b \Delta \ptalpha w+ \frakl(x,t)  \wt+ \frakn(x,t) w= \FG(x,t),\\
										&\ \frac{\partial w}{\partial n} =0 \quad \text{on } \ \Gamma, \\
										&\ (w, \wt) \vert_{t=0}= (0, 0),
									\end{aligned} \right.
								\end{equation}
								where the right-hand side is given by
								\begin{equation} \label{def F}
									\begin{aligned}
										\FG = F-\frakm(x,t) \Gtt + \csq \Delta G +b \Delta \ptalpha G - \frakl(x,t)  \Gt- \frakn(x,t) G.
									\end{aligned}
								\end{equation}
								~\\
								\noindent \underline{Step II: Regularized Neumann data}. Thanks to Lemma~\ref{lemma: Compatible reg data}, we can introduce the regularized Neumann data 
								\[
								\{ \greg\}_\nu \subset C^2([0,T];  \HthreehalfG), \quad \greg(0)=\gregt(0)=0,
								\]
							and instead of \eqref{linearized problem homogen} consider the following problem:
								\begin{equation}\label{linearized problem regularized}  \tag{IBVP$_\text{lin}^{\text{hom}, \nu}$}
									\left\{	\begin{aligned}
										&\ \frakm(x,t) \wregtt - \csq \Delta \wreg-b \Delta \ptalpha \wreg+ \frakl(x,t)  \wregt+ \frakn(x,t) \wreg= \FGreg,\\
										&\ \frac{\partial \wreg}{\partial n} =0 \quad \text{on } \ \Gamma, \\
										&\ (\wreg, \wregt) \vert_{t=0}= (0, 0).
									\end{aligned} \right.
								\end{equation}
								where the right-hand side is now
								\begin{equation} \label{def FGreg}
									\begin{aligned}
										\FGreg = F-\frakm(x,t) \Gregtt + \csq \Delta \Greg +b \Delta \ptalpha \Greg - \frakl(x,t)  \Gregt- \frakn(x,t) \Greg,
									\end{aligned}
								\end{equation}
								with $\Greg = \Neumannextension \greg$.
								By the regularity of $\greg$ and properties of $\Neumannextension$, we conclude that 
								\begin{equation}\label{Regularity_G}
								\Greg \in H^2(0,T; \Hfour).  
								\end{equation}
								Due to compatibility of regularized data, we also know that $\Greg(0)=0$ and $\Gregt(0)=0$. \\
								\indent The right-hand side of \eqref{linearized problem regularized}  can be bounded using the inequality
								\begin{equation}
									\begin{aligned}
									&\| v_1 v_2 \|_{L^2(\Hone)} \\
									\lesssim&\, \|v_1\|_{\LinfLinf}\|v_2\|_{\LtwoLtwo}+\|\nabla v_1\|_{L^\infty(\Lfour)}\|v_2\|_{L^2(\Lfour)}+\|v_1\|_{\LinfLinf}\|\nabla v_2\|_{\LtwoLtwo}\\
									\lesssim&\, \|v_1\|_{L^\infty(\Htwo)} \|v_2\|_{L^2(\Hone)}
									\end{aligned}
								\end{equation}
								as follows: 
								\begin{equation}\label{Estimate_F_nu}
									\begin{aligned}
										\|\FGreg\|_{\LtwoHone} 
										\lesssim&\, \begin{multlined}[t] \|F\|_{\LtwoHone}+ \|\frakm\|_{\Xfrakm}\|\Gregtt\|_{\LtwoHone}+\|\Delta \Greg\|_{\LtwoHone}\\+
										\bar{b}	\|\Delta \ptalpha \Greg\|_{\LtwoHone}+\|\frakl\|_{\Xfrakl}\|\Gregt\|_{\LtwoHone}+\|\frakn\|_{\Xfrakn}\|\Greg\|_{\LtwoHone}.
										\end{multlined}
									\end{aligned}
								\end{equation}
								 Then by  the properties of the extension mapping $\Neumannextension$, we have   
								\begin{equation} \label{est FGreg Hone}
									\begin{aligned}
										\|\FGreg\|_{\LtwoHone} 
										\lesssim \begin{multlined}[t] \|F\|_{\LtwoHone}+	\calLfraka \|\greg\|_{\spaceglower},
										\end{multlined}
									\end{aligned}
								\end{equation}
								where $\spaceglower$ is defined in \eqref{space g low}.\\[2mm]
								\noindent \underline{Step III: Galerkin approximation}. We next show the well-posedness of \eqref{linearized problem regularized} by employing the Galerkin procedure as in Proposition~\ref{prop: wellp forward lower}. We apply a testing procedure with test functions used in the Dirichlet case in~\cite[Lemma 3.2]{baker2024numerical}; we focus here on pointing out the main differences that arise due to the present Neumann setting. \\[1mm]	 						
								\noindent $\bullet$ {Testing with $\wregmt$}: We test the semi-discrete version of \eqref{linearized problem regularized} with $\wregmt$.
								Analogously to Proposition~\ref{prop: wellp forward lower}, this yields the energy estimate
								\begin{equation} \label{step 1: energy_ineq}
									\begin{aligned}
										&\nLtwo{\sqrt{\frakm(t)}\,  \wregmt(t)}^2  + \|\wregm(t)\|^2_{\Hone}  + b
										\ulC(T) \| \nabla \ptalpha  \wregm \|^2_{\LtwotLtwo} \\
										\lesssimT\;&\begin{multlined}[t] 
												\calLfraka^2 \| \wregmt\|_{\LtwotLtwo}^2
											+ \|\FGreg\|^2_{\LtwoLtwo}
										\end{multlined}
									\end{aligned}
								\end{equation}
								for $t \in [0, T]$. \\[2mm]								
								\noindent $\bullet$ {Testing with $-\Delta \wregmt$}:  Testing the semi-discrete equation with $-\Delta \wregmt$ and applying energy arguments analogous to~\cite{baker2024numerical} (see Appendix~\ref{appendix: energy est 1} for details) yields
								\begin{equation} \label{step 2: energy_ineq} 
									\begin{aligned}
										&\begin{multlined}[t] 	\|\sqrt{\fraka(t)}\nabla  \wregmt(t)\|^2_{\Ltwo} +	 	\|\D \wregm(t)\|^2_{\Ltwo} + b (\ulC(T) - \eps C_0)
										  \|\Delta \ptalpha \wregm \|^2_{\LtwotLtwo}
										\end{multlined}  \\
										\lesssim&\, 	\calLfraka^4\bigl(\|   \Delta \wregm\|^2_{\LtwotLtwo} +\|  \wregmt\|^2_{\LtwotHone}+\| \wregm\|^2_{\LtwotLtwo}  \bigr) \\&\hspace*{5.5cm}+	 \|\FGreg \|^2_{\LtwoLtwo}+\|\Delta \wregmt\|_{\LtwotLtwo}^2
									\end{aligned}
								\end{equation}
								for some $C_0>0$, independent of $b$ and $\alpha$. 
								We emphasize that we cannot yet control the term $\|\Delta \wregmt\|_{\LtwotLtwo}^2$ by the left-hand side. This becomes possible after the third testing step. \\[2mm]
							\noindent $\bullet$ {Applying $\Delta$ and testing with $\Delta \wregmt$}:  In the third testing step, we apply the operator $\Delta$ to the semi-discrete equation  associated to \eqref{linearized problem homogen} and then test the resulting equation with $\Delta \wregmt$. This yields
								\begin{equation} \label{step 3: identity_1}
									\begin{aligned} 
										&(\Delta(\frakm \wregmtt) -\csq \Delta^2 \wregm- b \Delta^2 \ptalpha \wregm,\,  \Delta \wregmt)_{\Ltwo}\\
										=&\, (\Delta \FGreg-\Delta(\frakl \wregmt)-\Delta(\frakn \wregm), \Delta \wregmt)_{\Ltwo}.
									\end{aligned}
								\end{equation}
							The terms involving variable coefficients $\fraka$, $\frakl$ and $\frakn$ are treated analogously to the Dirichlet case in \cite{baker2024numerical, kaltenbacher2022limiting}. The estimate of the $\FGreg$ term is new and we thus focus on it here.
								Recalling how $\FGreg$ is defined in \eqref{def FGreg} and using the fact that $\Delta \Greg=\Greg$ and $\Delta \ptalpha \Greg= \ptalpha \Greg$, we obtain
								\begin{equation}
									\begin{aligned}
										&\intt (\Delta \FGreg, \Delta \wregmt)_{\Ltwo} \ds \\
										=&\, \begin{multlined}[t]
											\intt \left(\Delta F -\Delta\left(\fraka\Gregtt \right) + \csq \Greg +b {\ptalpha  \Greg}  - \Delta \left(\frakl \Gregt\right)- \Delta \left(\frakn \Greg\right), \Delta \wregmt\right)_{\Ltwo} \ds.
										\end{multlined}
									\end{aligned}
								\end{equation}
For $\Delta(\fraka \Greg_{tt})$, we use the product rule and $\Delta \Greg_{tt}=\Greg_{tt}$:
\begin{equation}
	\int_0^t \bigl(\Delta(\fraka \Greg_{tt}), \Delta \wregmt\bigr)_{\Ltwo}\ds
	= \int_0^t \bigl( (\Delta \fraka)\,\Greg_{tt}
	+ 2\, \nabla \fraka \cdot \nabla \Greg_{tt}
	+ \fraka\, \Greg_{tt},\, \Delta \wregmt \bigr)_{\Ltwo}\ds .
\end{equation}
Analogously, we have
								\begin{equation}
									\begin{aligned}
									\intt \left(\Delta\left(\frakl \Gregt+\frakn \Greg\right), \Delta \wregmt\right)_{\Ltwo} 
										=&\,\begin{multlined}[t] \intt (\Delta \frakl \Gregt +2 \nabla \frakl \cdot \nabla \Gregt+\frakl \Gregt , \Delta \wregmt)_{\Ltwo}\\+\intt (\Delta \frakn \Greg +2 \nabla \frakn \cdot \nabla \Greg+\frakn \Greg , \Delta \wregmt)_{\Ltwo}.
											\end{multlined}
									\end{aligned}
								\end{equation}
							\noindent	By H\"older's inequality, we then obtain the estimate
								\begin{equation}
									\begin{aligned}
										&\intt (\Delta \FGreg, \Delta \wregmt)_{\Ltwo} \ds  \\
										\lesssim&\, \begin{multlined}[t]\intt \|\Delta F\|_{\Ltwo}\|\Delta \wregmt\|_{\Ltwo}  \ds +\calLfraka \intt \left(\|\Greg\|_{\Hone}+ \|\Gregt\|_{\Htwo}  +\|\ptalpha \Greg\|_{\Ltwo}\right. \\ \left.+\|\Gregtt\|_{\Hone} \right)  \|\Delta \wregmt\|_{\Ltwo}  \ds.
										\end{multlined}	
									\end{aligned}
								\end{equation}
								Above, we have relied on the bound
								\begin{equation}
									\begin{aligned}
										\int_{\Omega} \Delta \fraka \Gregtt \Delta \wregmt\dx\lesssim&\,\|\Delta \fraka\|_{\Lthree}\|\Gregtt\|_{\Lsix}\|\Delta \wregmt\|_{\Ltwo}\\
										\lesssim&\,\Vert \fraka \Vert_{\Wtwothree}\|\Gregtt\|_{\Hone}\|\Delta \wregmt\|_{\Ltwo},
									\end{aligned}
								\end{equation}
								and we have treated the $\frakl$ and $\frakn$ terms analogously. Then by elliptic regularity for the Neumann extension operator, we infer
										\begin{equation}
											\begin{aligned}
												& \intt (\Delta \FGreg, \Delta \wregmt)_{\Ltwo} \ds \\
												\lesssim&\, \begin{multlined}[t]\intt \left\{\|\Delta F\|_{\Ltwo}+\calLfraka \|\greg\|_{\spaceglower} \right\} \|\Delta \wregmt\|_{\LtwotLtwo}  \ds.
												\end{multlined}		
											\end{aligned}
										\end{equation}
										By testing the semi-discrete problem with $\wregmtt$ and its space-differentiated version with $\nabla \wregmtt$, it can be seen,  analogously to the Dirichlet case in~\cite{kaltenbacher2022limiting}, that 
										\begin{equation} \label{bound H1 wtt_}
											\begin{aligned}
												&\|  \wregmtt\|_{\LtwotHone} \\
												\lesssim&\,  \begin{multlined}[t] 
													\calLfraka^2\big ( \|  \Delta \ptalpha \wregm\|_{\LtwotHone} + \|   \wregm\|_{\LtwotHtwo}+ 
													\|\wregmt\|_{L^2(\Hone)}+\\ 
													\| \FGreg\|_{L^2(\Hone)} \bigr).
												\end{multlined}
											\end{aligned}
										\end{equation}
									 By otherwise also proceeding analogously to the Dirichlet case, we arrive at 
										 the outcome of the third testing step in the form of
										\begin{equation} \label{step 3: energy_ineq}
											\begin{aligned}
												&\begin{multlined}[t]  	\frac12\nLtwo{\sqrt{\fraka(t)} \Delta \wregmt(t)}^2  + \csqhalf\nLtwo{\nabla\D \wregm(t)}^2  \\+ b
													\ulC(T)  \|\nabla \D \ptalpha \wregm \|^2_{\LtwotLtwo}  
												\end{multlined} \\
												\lesssim&\,\begin{multlined}[t] 
													\calLfraka^6 \Bigl(\| \wregmt\|^2_{\LtwotHtwo}
													+\|\wregm\|_{\LtwotHtwo}^2		  \Bigr)
													+\eps b	\|  \Delta \ptalpha \wregm\|^2_{L^2_t(\Hone)} \\
													+\|\FGreg\|^2_{\LtwoHone}
													+ \|\Delta F\|_{\LtwoLtwo}^2 +\calLfraka^2\|\greg\|_{\spaceglower}^2
												\end{multlined}
											\end{aligned}
										\end{equation}
for any $\eps>0$ and all $t\in[0,T]$.  The skipped details are provided in Appendix~\ref{appendix: energy est 2} for completeness.
\\[2mm]	
\noindent $\bullet$ {Combining the estimates}:	Adding \eqref{step 1: energy_ineq}, \eqref{step 2: energy_ineq}, and \eqref{step 3: energy_ineq} and using the lower bound
										$\underline{\frakm} \le \frakm$ together with a suitable reduction of $\varepsilon>0$, we obtain										
										\begin{equation} \label{energy_ineq_2}
											\begin{aligned}
												&
												\|\wregmt(t)\|^2_{\Htwo}+ \|\wregm(t)\|^2_{\Hthree} 	+  \| \nabla  \ptalpha \wregm \|^2_{L^2_t(\Htwo)} \\
												\lesssim_T &\,\begin{multlined}[t] 	  \calLfraka^6 \Big( \| \wregmt\|^2_{\LtwotHtwo} 
													+ \|  \wregm\|^2_{L^2_t(\Htwo)}\Bigr) 
													+\|\FGreg\|^2_{\LtwoHone}+\|F\|^2_{\LtwoHtwo}
												\\	+\calLfraka^2 \|\greg\|_{\spaceglower}^2.
												\end{multlined}
											\end{aligned}
										\end{equation}									
Using the estimate for $\FGreg$ in~\eqref{est FGreg Hone} and combining with \eqref{bound H1 wtt_}, we further obtain
										\begin{equation} \label{energy_ineq_3}
											\begin{aligned}
												&
												\|{\wregmtt}\|^2_{\LtwotHone}+	\|\wregmt(t)\|^2_{\Htwo}+ \|\wregm(t)\|^2_{\Hthree} 	+  \| \nabla  \ptalpha \wregm \|^2_{L^2_t(\Htwo)}  \\
												\lesssim_T &\,\begin{multlined}[t] 	 \calLfraka^6  \Big( \| \wregmt\|^2_{\LtwotHtwo} 
													+ \|  \wregm\|^2_{L^2_t(\Htwo)}\Bigr) 
													+\| F\|^2_{\LtwoHtwo}+\calLfraka^2\|\greg\|_{\spaceglower}^2.
												\end{multlined}
											\end{aligned}
										\end{equation}
Applying Gr\"onwall's inequality to \eqref{energy_ineq_3} yields the uniform estimate
										\begin{equation} \label{final m est reg}
											\begin{aligned}
												&	\|\wregmtt\|^2_{\LtwoHone}+	\|\wregmt\|^2_{\LinfHtwo}+ \|\wregm\|^2_{\LinfHthree} 	+  \|\ptalpha \wregm \|^2_{\LtwoHthree}  \\
												\lesssimT&\, \Lambda(\frakm, \frakn, \frakl)\left( \|F\|^2_{\LtwoHtwo}+\|\greg\|^2_{\spaceglower}\right),
											\end{aligned}
										\end{equation}
where $\Lambda$ is defined in \eqref{def Lambda}.\\[2mm]
\noindent $\bullet$ {Passing to the limit as $m \rightarrow \infty$}: The estimates derived above imply that the sequence $\{\wregm\}_m$ is bounded uniformly in $m$ in the space $\Xp$. Hence, by standard weak compactness arguments, we can extract a (not relabeled) subsequence and pass to the limit in the semi-discrete formulation shows that $\wreg$ satisfies
\eqref{linearized problem regularized}. By taking the limit $m \to \infty$ in~\eqref{final m est reg} and using the lower
semi-continuity of norms, together with the fact that
\[
 \|\greg\|_{\spaceglower} \lesssim \|g\|_{\spaceglower},
\]
we obtain the $\nu$-uniform estimate
										\begin{equation} \label{final reg est}
											\begin{aligned}
												&	\|\wregtt\|^2_{\LtwoHone}+	\|\wregt\|^2_{\LinfHtwo}+ \|\wreg\|^2_{\LinfHthree} 	+  \|\ptalpha \wreg \|^2_{\LtwoHthree}  \\
												\lesssimT&\, \Lambda(\frakm, \frakn, \frakl)\left( \|F\|^2_{\LtwoHtwo}+\|g\|^2_{\spaceglower}\right).
											\end{aligned}
										\end{equation}

										\noindent \underline{Step IV: Passing to the limit as $\nu \rightarrow \infty$}. From the uniform estimate \eqref{final reg est}, $\{\wreg\}_{\nu}$ is bounded in $\Xp$. Hence, we may extract a subsequence (that we do  not relabel), such that
										\begin{equation}
											\begin{aligned}
												\wreg &\overset{\ast}{\rightharpoonup} w \quad   \text{in} \quad L^\infty(0,T; \Hthree), \quad
												&&\wregt \overset{\ast}{\rightharpoonup} \wt \quad   \text{in} \quad  L^\infty(0,T; \Htwo), \\
												\ptalpha \wreg &{\rightharpoonup} \ptalpha w \quad   \text{in} \quad L^2(0,T; \Hthree),\quad
												&&\wregtt {\rightharpoonup}  \wtt \quad   \text{in} \quad L^2(0,T; \Hone).
											\end{aligned}
										\end{equation}
										By the Aubin--Lions compactness theorem (see~\cite[(6.5)]{simon1986compact}), we also have strong convergence along a subsequence
										\begin{equation}
											\wreg \rightarrow w \quad \text{in} \quad C([0,T]; \Htwo) , \quad 	\wreg_t \rightarrow w_t \quad \text{in} \quad C([0,T]; \Hone). 
										\end{equation}
										Passing to the limit $\nu \rightarrow \infty$ in the regularized problem \eqref{linearized problem regularized} shows that $w$ solves \eqref{linearized problem homogen}. Uniqueness follows by testing the homogeneous problem with $\wt$, following an analogous estimating strategy as in the first testing step of the Galerkin procedure. \\
										\indent By passing to the $\nu \rightarrow \infty$ limit in \eqref{final reg est}, we see that $w$ satisfies
										\begin{equation} \label{final est w}
											\begin{aligned}
												&	\|\wtt\|^2_{\LtwoHone}+	\|\wt\|^2_{\LinfHtwo}+ \|w\|^2_{\LinfHthree} 	+  \|\ptalpha w \|^2_{\LtwoHthree}  \\
												\lesssimT&\, \Lambda(\frakm, \frakn, \frakl)\left( \|F\|^2_{\LtwoHtwo}+\|g\|^2_{\spaceglower}\right).
											\end{aligned}
										\end{equation}
										~\\
										\noindent \underline{Step V: The result for the inhomogeneous problem}. In the final step of the proof, we return to the function
										\begin{equation}
											\begin{aligned}
												u = w+ G
											\end{aligned}
										\end{equation} 
										which solves \eqref{linearized problem}. From estimate \eqref{final est w}, we have
										\begin{equation}
											\|w\|^2_{\Xp} \lesssim \Lambda(\frakm, \frakn, \frakl)\left( \|F\|^2_{\LtwoHtwo}+\| g\|^2_{\spaceglower}\right),
										\end{equation}
										and by the properties of the elliptic operator $\Neumannextension$, we have
										\begin{equation}
											\begin{aligned}
												\|G\|_{\Xp}=	\|\Neumannextension g\|_{\Xp} \lesssim \|g\|_{\spaceg}.
											\end{aligned}
										\end{equation}
Combining these bounds yields
\[
\|u\|^2_{\Xp}
\lesssim_T
\Lambda(\frakm,\frakn,\frakl)\bigl( \|F\|_{L^2(0,T;H^2)}^2 + \|g\|_{\spaceg}^2 \bigr),
\]
which is exactly \eqref{energy est forward}.  This completes the proof.
									\end{proof}
									\subsection{Well-posedness of the state problem}
Equipped with the well-posedness result for the linearized problem in the higher-regularity setting (Proposition~\ref{prop: wellp forward linear}), we now apply Banach’s fixed-point theorem to obtain existence and uniqueness for the full nonlinear state problem.
									\begin{theorem}\label{thm: wellp forward}
				Let $\alpha \in (0,1)$, $c>0$, $b \in (0, \bar{b})$ for some $\bar{b}>0$, and fix a final time $T>0$. Assume that the coefficient satisfies $k \in \Xk = W^{2,3}(\Omega) \cap W^{1,\infty}(\Omega)$, that the distributed source satisfies
				$f \in \Xf = L^2(0,T;H^2(\Omega))$, and that the boundary data $g \in \spaceg$ satisfies the
				compatibility conditions~\eqref{compatibility conditions}, where $\spaceg$ is defined in
				\eqref{def spaceg}. Then there exists a constant $\delta= \delta(T)>0$ such that if 
				\begin{equation}\label{smallness k}
					\|k\|_{\Xk} \leq \delta,
				\end{equation}
				the nonlinear problem \eqref{ibvp West} admits a unique solution $p \in \Xp$, where $\Xp$ is defined in \eqref{def_Xp}.
				The solution satisfies
				\begin{equation}\label{energy est forward nl}
					\|p\|_{\Xp}  \lesssim_T \|f\|_{\Xf} + \|g\|_{\spaceg},
				\end{equation}
				where the hidden constant is independent of the dissipation parameter $b$ and the fractional order $\alpha$. Moreover, there exist $\ulfrakm$, $\olfrakm$,  independent of $b$ and $\alpha$, such that
					\begin{equation}\label{nondegeneracy assumption frakm frakm}
						0<\ulfraka \leq1- 2k(x)p(x,t) \leq \olfraka \quad \ \text{for all $(x,t)\in \Omega  \times [0,T]$}.
				\end{equation}
									\end{theorem}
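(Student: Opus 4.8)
The plan is to recast \eqref{ibvp West} in the quasilinear form \eqref{West rewritten} and solve it by Banach's fixed-point theorem, using the higher-regularity linear theory of Proposition~\ref{prop: wellp forward linear} for the iteration and the lower-regularity theory of Proposition~\ref{prop: wellp forward lower} for the contraction. Fix $f\in\Xf$ and $g\in\spaceg$ satisfying \eqref{compatibility conditions}. For a radius $R>0$ to be fixed below, set
\[
\ball = \{\bar p\in\Xp:\ \bar p(0)=\bar p_t(0)=0,\ \|\bar p\|_{\Xp}\le R\},
\]
and, for $\bar p\in\ball$, let $\mathcal{T}\bar p$ be the solution of \eqref{linearized problem} provided by Proposition~\ref{prop: wellp forward linear} with coefficients $\fraka = 1-2k\bar p$, $\frakl=-2k\bar p_t$, $\frakn=0$, right-hand side $F=f$, vanishing $u_1$, and Neumann datum $g$. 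Since $((1-2kp)p_t)_t=(1-2kp)p_{tt}-2k(p_t)^2$ and $(p^2)_{tt}=2pp_{tt}+2(p_t)^2$, a fixed point of $\mathcal{T}$ is exactly a solution of \eqref{ibvp West} lying in $\Xp$. To see that $\mathcal{T}$ is well defined on $\ball$ I first check the coefficient hypotheses: using the multiplier estimate $\|v_1v_2\|_{\Htwo}\lesssim\|v_1\|_{\Wtwothree\cap\Woneinf}\|v_2\|_{\Htwo}$ and the embeddings $\Hthree\hookrightarrow\Wtwothree$ and $\Htwo\hookrightarrow\Linf$ (valid for $d\le3$), one obtains $\fraka\in\Xfrakm$ and $\frakl\in\Xfrakl$ with $\calL_{\fraka,\frakn,\frakl}\lesssim 1+\|k\|_{\Xk}\|\bar p\|_{\Xp}$, while $\|2k\bar p\|_{\LinfLinf}\lesssim\|k\|_{\Xk}\|\bar p\|_{C([0,T];\Htwo)}\lesssim\|k\|_{\Xk}R$; hence, for $\|k\|_{\Xk}\le\delta$ with $\delta$ small relative to $R$, the non-degeneracy \eqref{nondegeneracy assumption frakm frakm} holds uniformly over $\ball$ with $\ulfraka=\tfrac12$, $\olfraka=\tfrac32$.

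Next I establish self-mapping. When $\|k\|_{\Xk}R\le1$ we have $\calL_{\fraka,\frakn,\frakl}\le C_0$ and therefore $\Lambda(\fraka,\frakn,\frakl)\le\Lambda_0(T)$, a constant depending on $T$ (and $\Omega$) but not on the data nor on $b,\alpha$; so \eqref{energy est forward} gives $\|\mathcal{T}\bar p\|_{\Xp}\le (C_T\Lambda_0(T))^{1/2}(\|f\|_{\Xf}+\|g\|_{\spaceg})$. I therefore fix $R:=(C_T\Lambda_0(T))^{1/2}(\|f\|_{\Xf}+\|g\|_{\spaceg})$ and then take $\delta=\delta(T)>0$ small enough that $\delta R\le1$ and the non-degeneracy bound above holds; this yields $\mathcal{T}(\ball)\subset\ball$ and, for the eventual fixed point, the estimate \eqref{energy est forward nl} with constant independent of $b$ and $\alpha$. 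For the contraction, given $\bar p_1,\bar p_2\in\ball$, writing $p_i=\mathcal{T}\bar p_i$ and subtracting the two linear equations shows that $q:=p_1-p_2$ solves \eqref{linearized problem} with coefficients $\fraka=1-2k\bar p_1$, $\frakl=-2k(\bar p_1)_t$, $\frakn=0$, zero initial and boundary data, and right-hand side
\[
F=2k(\bar p_1-\bar p_2)(p_2)_{tt}+2k(\bar p_1-\bar p_2)_t(p_2)_t.
\]
Because $(p_2)_{tt}$ only lies in $L^2(0,T;\Hone)$, this $F$ is not in $L^2(0,T;\Htwo)$, so the higher-regularity estimate is unavailable; instead I use the lower-regularity estimate of Proposition~\ref{prop: wellp forward lower} (whose uniqueness hypotheses in Lemma~\ref{lemma: uniqueness} are met by these coefficients), which gives $\|q\|_{\Xlow}\lesssimT\|F\|_{\LtwoLtwo}$, and then, by $\Hone\hookrightarrow\Lsix$ and $\Htwo\hookrightarrow\Linf$,
\[
\|F\|_{\LtwoLtwo}\lesssim\|k\|_{\Xk}\bigl(\|\bar p_1-\bar p_2\|_{\LinfHone}\|(p_2)_{tt}\|_{\LtwoHone}+\sqrt{T}\,\|(\bar p_1-\bar p_2)_t\|_{\LinfLtwo}\|(p_2)_t\|_{\LinfHtwo}\bigr)\lesssimT\delta R\,\|\bar p_1-\bar p_2\|_{\Xlow}.
\]
After a further smallness restriction on $\delta$ (again relative to $R$), $\mathcal{T}$ is a contraction in the $\Xlow$-metric.

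It remains to note that $\ball$, though defined through the $\Xp$-norm, is a complete metric space for the weaker $\Xlow$-metric: an $\Xlow$-Cauchy sequence in $\ball$ is bounded in $\Xp$, hence admits a weak-$\ast$ limit in the convex, $\Xp$-closed set $\ball$, which coincides with its $\Xlow$-limit. Banach's theorem then produces a unique $p\in\ball$ with $\mathcal{T}p=p$; by the construction above, $p\in\Xp$ solves \eqref{ibvp West}, obeys \eqref{energy est forward nl}, and satisfies \eqref{nondegeneracy assumption frakm frakm} with $\ulfraka=\tfrac12$, $\olfraka=\tfrac32$. Uniqueness within all of $\Xp$ (not merely inside $\ball$) is proved separately: for two $\Xp$-solutions $p,\tilde p$ the difference solves a linear problem of the same structure with right-hand side $2kq\tilde p_{tt}$, and the lower-regularity estimate on a short interval $[0,\tau]$, combined with $\|\tilde p_{tt}\|_{L^2(0,\tau;\Lsix)}\to0$ as $\tau\searrow0$ and the smallness of $k$, forces $q\equiv0$ on $[0,\tau]$; iterating over $[0,T]$ gives $q\equiv0$. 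The main obstacle is precisely this mismatch between the regularity needed for self-mapping (the $\Xp$-norm) and the regularity available in the contraction step (where $F$ sits only in $L^2(0,T;\Ltwo)$): it forces one to equip the ball with two different topologies and to verify the completeness statement above, while simultaneously tracking that every constant — $\Lambda$, $\Lambda_0$, and the non-degeneracy bounds — stays independent of $b$ and $\alpha$.
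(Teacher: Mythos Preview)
Your argument is correct and follows essentially the same route as the paper: Banach's fixed-point theorem applied to the map $\calT:\bar p\mapsto p$ on a ball in $\Xp$, with Proposition~\ref{prop: wellp forward linear} supplying the self-mapping estimate and Proposition~\ref{prop: wellp forward lower} supplying the contraction in the weaker $\Xlow$-topology, together with Banach--Alaoglu for closedness of $\ball$ in that topology. Your presentation is in fact slightly more careful than the paper's in two places: you spell out why the fixed point is unique in all of $\Xp$ (not merely in $\ball$), and you make explicit the two-topology structure of the argument; the only minor slip is that the right-hand side in your global-uniqueness step should read $2k(q\tilde p_t)_t=2kq_t\tilde p_t+2kq\tilde p_{tt}$ (or, equivalently, the $q_t$-term should be absorbed into $\frakl$), just as you did in the contraction step.
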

									~\\[-6mm]
										\begin{remark}[On the smallness assumption]
										Theorem~\ref{thm: wellp forward} requires smallness of the nonlinearity coefficient $k$. This coefficient can be expressed as 
										\[
										k = \frac{1+B/(2A)}{\rho c^2},
										\] 
										where $\rho$ denotes the background density and the ratio $B/A$ is the acoustic nonlinearity parameter, which is proportional to the ratio of the quadratic and linear coefficients in the Taylor expansion of pressure as a function of density perturbations; see~\cite{beyer2024parameter, prieur2011nonlinear}. In biological tissues, $B/A \in [5, 12]$; see \cite[Ch.\ 2, Table III]{hamilton1998nonlinear}. To assess the validity of the smallness assumption \eqref{smallness k}, we consider a nondimensional formulation obtained by introducing the scaling
										\begin{equation}
											\begin{aligned}
												x = L \tilde{x}, \qquad t = \frac{L}{c} \tilde{t}, \qquad p = \pref \, \tilde{p},
											\end{aligned}
										\end{equation}
										where $L$ is a characteristic length scale (e.g., the diameter of $\Omega$) and $\pref$ is a reference pressure amplitude.  After the change of variables and division by $\frac{c^2 \pref}{L^2}$, we obtain the dimensionless equation
										\begin{equation}
											\begin{aligned}
												\tilde{p}_{tt} - \Delta p - \tilde{b} \Delta \ptalpha \tilde{p} = \tilde{k} (\tilde{p}^2)_{tt},
											\end{aligned}
										\end{equation}
										where 
										\[
										\tilde{b} = \frac{b}{c^2 } \left(\frac{L}{c}\right)^{-\alpha} \quad \text{ and   } \quad \tilde{k} = \frac{(1+B/(2A)) \, \pref}{\rho c^2}.
										\]
										The parameter $\tilde{k}$ is  therefore small whenever $\pref \ll \rho c^2$. Typical ultrasound pressure amplitudes used in therapeutic acoustics are several orders of magnitude smaller than $\rho c^2 \sim 10^9$--$10^{10}\, \mathrm{Pa}$; see, for example,~\cite[Fig.\ 21.72]{beach2007medical}. Hence the smallness condition \eqref{smallness k} is meaningful in all practically relevant regimes.
									\end{remark}
										~\\[-7mm]
									\begin{proof}
										The proof of Theorem~\ref{thm: wellp forward} follows by applying Banach's fixed-point theorem to the mapping
										\begin{equation}
											\calT:  \pstar \mapsto p,
										\end{equation}
										where $\pstar$ is taken from a closed ball of radius $R$ in $\Xp$:
										\begin{equation}
											\begin{aligned}
												\ball = \left \{ \pstar  \in \Xp: \ \|\pstar\|_{\Xp} \leq R \right \},
											\end{aligned}
										\end{equation}
										and $p$ is defined as the solution of the linearized problem \eqref{linearized problem} with
										\begin{equation}
											\begin{aligned}  
												\fraka = 1- 2k\pstar, \quad \frakl = -2k \pstart, \quad \frak n =0, \quad \text{and} \quad F=f.
											\end{aligned}
										\end{equation}
										We first verify that the assumptions of Proposition~\ref{prop: wellp forward linear} are satisfied. 				The non-degeneracy condition on $\fraka$ in \eqref{nondegeneracy assumption frakm frakm} follows by employing the Sobolev embedding $\Htwo \hookrightarrow C(\overline{\Omega})$ (see~\cite[Theorem 4.12]{adams2003sobolev}):
										\begin{equation}
											\begin{aligned}
												\|2k \pstar\|_{\LinfLinf} \lesssim \|k\|_{\Linf} \|\pstar\|_{\LinfHtwo} \lesssim \|k\|_{\Linf} R.
											\end{aligned}
										\end{equation}	
										By choosing $\|k\|_{\Linf}$ sufficiently small, we ensure that $\fraka = 1-2k \pstar$ remains positive.
										The additionally required regularity 
										\[
										\fraka \in \Xfraka = L^\infty(0,T; \Wtwothree \cap \Woneinf) \cap W^{1,\infty}(0,T; \Linf)
										\]
										follows from the fact that $\Wtwothree$ and $\Woneinf$ are algebras in our at most three-dimensional setting $d \leq 3$ (see~\cite[Theorem 4.39]{adams2003sobolev}), so that
										\begin{equation}
											\begin{aligned}
												\|\fraka \|_{\Xfraka} \lesssim&\, 1+ \| k \pstar\|_{L^\infty( \Wtwothree \cap \Woneinf)} + \|k\|_{\Linf} \|\pstar\|_{W^{1,\infty}(\Linf)} \\
											 \lesssim&\, 1+ \| k\|_{\Xk} \| \pstar\|_{L^\infty( \Wtwothree \cap \Woneinf)} + \|k\|_{\Linf} \|\pstar\|_{W^{1,\infty}(\Linf)} \\	
												\lesssim&\, 1+ \|k\|_{\Xk}R,
											\end{aligned}
										\end{equation}
										where in the last step we have used the Sobolev embedding $\Hthree \hookrightarrow \Wtwothree \cap \Woneinf$. Similarly, 
										\begin{equation} \label{est frakn} 
											\|\frakl\|_{\LinfHtwo} \lesssim \|k\|_{\Htwo} R.
										\end{equation}
						 Therefore the assumptions of Proposition~\ref{prop: wellp forward linear} are fulfilled, and		$\mathcal{T}$ is well-defined. \\[1mm]
					\indent To show that $\calT(\ball) \subset \ball$, let $\pstar \in \ball$ and $p = \calT(\pstar)$. From Proposition~\ref{prop: wellp forward linear}, we have
										\begin{equation}
											\begin{aligned}
												\|p\|^2_{\Xp} \lesssim_T &\, \begin{multlined}[t]		 \Lambda(\fraka, \frakn, \frakl)\left(
													\|g\|^2_{\spaceg}+\|f\|^2_{\spacef} \right), \end{multlined}
											\end{aligned}
										\end{equation}
										where
										\begin{equation}
											\calLfraka = 1+ \|\fraka\|_{\Xfrakm} + \|\frakl\|_{\Xfrakl} \lesssim 1+ \|k\|_{\Xk} R,
										\end{equation}
										and 
										\begin{equation}
											\Lambda(\fraka, \frakn, \frakl) =	\calL^2_{\fraka, \frakn, \frakl}	\exp  \left\{ CT \calL^6_{\fraka, \frakn, \frakl} \right\} \lesssim (1+ \|k\|_{\Xk} R)^2 	\exp  \left\{ CT (1+ \|k\|_{\Xk} R)^6\right\}. 
										\end{equation}
										Hence,  
										\begin{equation}
											\begin{aligned}
												\|p\|^2_{\Xp} \lesssim &\, (1+ \|k\|_{\Xk} R)^2 	\exp  \left\{ CT (1+ \|k\|_{\Xk} R)^6\right\}(\gconst^2+\fconst^2) \leq R^2,
											\end{aligned}
										\end{equation}
										where the last inequality holds if the radius $R= R(\gconst, \fconst)$ is sufficiently large and $\|k\|_{\Xk}$ sufficiently small relative to $R$.  Thus, in that setting, $\calT(\ball) \subset \ball$. \\[1mm]
										\indent The verify that $\calT$ is a strict contraction, take $\pstarone$, $\pstartwo \in \ball$ and set $\pone= \calT \left( \pstarone\right)$, $\ptwo = \calT\left( \pstartwo\right)$. Let $\pstardiff = \pstarone-\pstartwo$. The difference $\pdiff=\pone-\ptwo$ solves
										\begin{equation}
											\begin{aligned}
												((1-2k \pstarone)\pdiff_t)_t - c^2 \Delta \pdiff - b \Delta \ptalpha \pdiff - 2k (\pstardiff \ptwo_t)_t =0
											\end{aligned}
										\end{equation}
										with homogeneous initial and Neumann data. Using the uniform bounds \[
										\|\pstarone\|_{\Xp}, \ \|\pstartwo\|_{\Xp} \lesssim R,
										\] 
										and exploiting the estimate \eqref{final est lower} from Proposition~\ref{prop: wellp forward lower} with 
										\[
										\fraka = 1-2k \pstarone, \ \frakl = 1-2k \pstarone_t, \ \frakn =0, \text{ and }\ F= - 2k (\pstardiff \ptwo_t)_t,
										\]
										we obtain strict contractivity of $\calT$ in the norm of \sloppy $W^{1,\infty}(0,T; \Ltwo) \cap L^\infty(0,T; \Hone)$  for $\|k\|_{\Linf}$ sufficiently small. We can argue that $\ball$ is closed in this norm by exploiting the Banach--Alaoglu theorem (see \cite[Theorem B.1.7]{hytonen2016analysis}), analogously to~\cite[Theorem 4.1]{kaltenbacher2022parabolic}. 									
										 Banach's fixed-point theorem now yields the existence of a unique fixed-point $p \in \ball \subset \Xp$, which is the unique solution of the problem.										
									\end{proof}
									  
\section{Existence of optimal controls} \label{sec: existence opt control}
In this section, we establish the existence of a minimizer for the optimal control under study. To this end, we first introduce the control-to-state operator. Let $\gconst$, $\fconst>0$ be given and fixed. We define the sets of admissible boundary controls as follows:
\begin{equation} \label{def spaceg admissible}
	\begin{aligned}
		\spacegad =&\,\begin{multlined}[t] \Bigl\{ g \in \spaceg=L^\infty(0,T; \HthreehalfG) \cap   \Hsubalpha(0,T; \HthreehalfG) \cap W^{1, \infty}(0,T; \HonehalfG)\\ \cap H^2(0,T; \HneghalfG)
			:  \|g\|_{\spaceg} \leq \gconst, \ g(\cdot, 0)= \gt(\cdot, 0)=0  \Bigr\},
		\end{multlined}
	\end{aligned}
\end{equation}
The set $\spacegad$ represents a set of smooth controls in the Banach space $\spaceg$ subject to appropriate compatibility conditions with respect to the initial data, as dictated by Theorem~\ref{thm: wellp forward}. The admissible set of distributed controls is defined as the closed ball in the Hilbert space $\spacef$
\begin{equation} \label{def spacef admissible}
	\begin{aligned}
		\Xfad =&\,\begin{multlined}[t] \Bigl\{ f \in \Xf = L^2(0,T; \Htwo): \ \|f\|_{\Xf} \leq \fconst \Bigr\}.
		\end{multlined}
	\end{aligned}
\end{equation}
The control-to-state mapping is then defined by
\begin{equation}\label{def controltostate}
	\begin{aligned}
		\controltostate:  \spacegad \times \Xfad \rightarrow \Xp, \qquad (g,f) \mapsto \, \controltostate(g,f)= p \ \ \text{solving}\ \eqref{ibvp West}.
	\end{aligned}
\end{equation} 
Under the assumptions of Theorem~\ref{thm: wellp forward} with the nonlinearity coefficient $\|k\|_{\Xk} \leq \delta$, the control-to-state mapping is well-defined. 
We can then write the optimal control problem \eqref{opt problem}
in the reduced form 
as
\begin{equation} \label{reduced problem}
	\begin{aligned}
\fbox{	$	\text{Find } (\tildeg, \tildef) \in \spacegad\times \Xfad, \text{ such that } \ j(\tildeg, \tildef) = \displaystyle \inf_{(g,f) \in \spacegad \times \Xfad} j(g, f)$}
	\end{aligned}
\end{equation}
with
\begin{equation} \label{def j reduced}
	j(g,f) = J(\controltostate(g,f), g, f). 
\end{equation}
We next address the question of existence of optimal controls.
\begin{theorem}[Existence of optimal controls] \label{thm: existence opt control}   
	Let the assumptions of Theorem~\ref{thm: wellp forward} hold and let $\controltostate$ be defined in \eqref{def controltostate}.
	Then there exists at least one optimal control $(\tildeg, \tildef) \in \spacegad \times \Xfad$,
	 which minimizes the cost functional $J(\controltostate(g,f), g, f)$ over $(g, f) \in \spacegad \times \Xfad$.
\end{theorem}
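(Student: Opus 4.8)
The plan is to apply the direct method of the calculus of variations, the nontrivial inputs being the data-uniform a priori bound and the compactness furnished by Theorem~\ref{thm: wellp forward}. Since $J\ge 0$ on $M$, the infimum in \eqref{reduced problem} is finite, so I would pick a minimizing sequence $(g_n,f_n)\in\spacegad\times\Xfad$ with $j(g_n,f_n)\to\inf_{(g,f)\in\spacegad\times\Xfad}j(g,f)$ and let $p_n=\controltostate(g_n,f_n)\in\Xp$ be the associated states. By the admissibility constraints $\{g_n\}$ is bounded in $\spaceg$ and $\{f_n\}$ is bounded in $\Xf$, so Theorem~\ref{thm: wellp forward} yields
\[
\|p_n\|_{\Xp}\lesssimT\|f_n\|_{\Xf}+\|g_n\|_{\spaceg}\le\fconst+\gconst
\]
uniformly in $n$.

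Next I would extract limits. As $\Xf$ is a Hilbert space and $\Xfad$ is a closed, bounded, convex (hence weakly closed) ball, along a subsequence $f_n\rightharpoonup\tilde f$ in $\Xf$ with $\tilde f\in\Xfad$. For $\{g_n\}$, the components $L^\infty(0,T;\HthreehalfG)$ and $W^{1,\infty}(0,T;\HonehalfG)$ of $\spaceg$ are duals of separable Banach spaces while $\Hsubalpha(0,T;\HthreehalfG)$ and $H^2(0,T;\HneghalfG)$ are Hilbert spaces, so along a further subsequence $g_n\overset{\ast}{\rightharpoonup}\tilde g$ in the first two and $g_n\rightharpoonup\tilde g$ in the last two, all limits coinciding distributionally. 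Weak(-$\ast$) lower semicontinuity of the constituent norms gives $\|\tilde g\|_{\spaceg}\le\gconst$, and since $g\mapsto g(0)$ and $g\mapsto g_t(0)$ are bounded linear functionals on $\spaceg$ (via $W^{1,\infty}(0,T;\HonehalfG)\hookrightarrow C([0,T];\HonehalfG)$ and $H^2(0,T;\HneghalfG)\hookrightarrow C^1([0,T];\HneghalfG)$), the compatibility conditions $g_n(0)=g_{n,t}(0)=0$ pass to the limit, so $\tilde g\in\spacegad$. From the uniform $\Xp$-bound on $\{p_n\}$ I would extract a further subsequence with $p_n\overset{\ast}{\rightharpoonup}\bar p$ in $L^\infty(0,T;\Hthree)$, $p_{n,t}\overset{\ast}{\rightharpoonup}\bar p_t$ in $L^\infty(0,T;\Htwo)$, $\ptalpha p_n\rightharpoonup\ptalpha\bar p$ in $L^2(0,T;\Hthree)$, $p_{n,tt}\rightharpoonup\bar p_{tt}$ in $L^2(0,T;\Hone)$, and---by the Aubin--Lions--Simon theorem, exactly as in the proof of Proposition~\ref{prop: wellp forward linear}---the strong convergences $p_n\to\bar p$ in $C([0,T];\Htwo)$ and $p_{n,t}\to\bar p_t$ in $C([0,T];\Hone)$.

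Then I would verify $(\bar p,\tilde g,\tilde f)\in M$ by passing to the limit in the weak formulation of \eqref{ibvp West} written for $(p_n,g_n,f_n)$. The linear interior terms, the fractional interior and boundary terms, and the boundary data terms converge by the weak/weak-$\ast$ limits above, using that $\ptalpha$ is a bounded linear (hence weakly continuous) operator between the relevant spaces, and $f_n\rightharpoonup\tilde f$ handles the source. For the nonlinearity one writes $k(p_n^2)_{tt}=2k(p_{n,t})^2+2k\,p_np_{n,tt}$: the first term converges strongly in $C([0,T];\Ltwo)$ because $p_{n,t}\to\bar p_t$ in $C([0,T];\Hone)\hookrightarrow C([0,T];\Lfour)$, and the second converges weakly in $L^2(0,T;\Ltwo)$ to $2k\,\bar p\,\bar p_{tt}$ by a weak-times-strong argument based on $p_n\to\bar p$ in $C([0,T];\Htwo)\hookrightarrow C([0,T];\Linf)$ and $p_{n,tt}\rightharpoonup\bar p_{tt}$ in $L^2(0,T;\Ltwo)$. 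Hence $\bar p$ solves \eqref{ibvp West} with data $(\tilde g,\tilde f)$, and the uniqueness part of Theorem~\ref{thm: wellp forward} identifies $\bar p=\controltostate(\tilde g,\tilde f)$.

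Finally I would invoke weak lower semicontinuity of $J$. The strong convergence $p_n\to\bar p$ in $C([0,T];\Htwo)\hookrightarrow C([0,T];L^2(\Omega_0))$ makes the tracking terms converge, $\|p_n-\pd\|^2_{L^2(0,T;L^2(\Omega_0))}\to\|\bar p-\pd\|^2_{L^2(0,T;L^2(\Omega_0))}$ and $\|p_n(T)-\pd(T)\|^2_{L^2(\Omega_0)}\to\|\bar p(T)-\pd(T)\|^2_{L^2(\Omega_0)}$, while $\calR$, being convex and continuous on $L^2(0,T;\LtwoG)\times\Xf$, is weakly lower semicontinuous, so $\calR(\tilde g,\tilde f)\le\liminf_n\calR(g_n,f_n)$. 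Adding these up gives
\[
J(\bar p,\tilde g,\tilde f)\le\liminf_n J(p_n,g_n,f_n)=\inf_{(g,f)\in\spacegad\times\Xfad}j(g,f),
\]
and since $(\tilde g,\tilde f)\in\spacegad\times\Xfad$ it is an optimal control. I expect the main obstacle to be the limit passage in the quadratic nonlinearity $k(p^2)_{tt}$: it succeeds only because the bound of Theorem~\ref{thm: wellp forward} is uniform over the admissible balls and, through Aubin--Lions--Simon, yields strong convergence of $p_n$ and $p_{n,t}$ in norms strong enough to control the products $(p_{n,t})^2$ and $p_np_{n,tt}$; a secondary delicate point is the weak-$\ast$ sequential closedness of $\spacegad$ in the non-reflexive Banach space $\spaceg$, in particular preserving the compatibility conditions under the various weak limits.
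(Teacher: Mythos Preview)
Your proposal is correct and follows essentially the same approach as the paper's proof: the direct method, uniform bounds from Theorem~\ref{thm: wellp forward}, weak(-$\ast$) extraction for the controls and states, Aubin--Lions--Simon for strong convergence of $p_n$ and $p_{n,t}$, passage to the limit in the weak formulation (the paper handles the nonlinearity via the same weak-times-strong argument for $p_n p_{n,tt}$), and weak lower semicontinuity of $J$. The only minor differences are cosmetic: the paper invokes \cite[Lemma~3.1.7]{zheng2004nonlinear} for the preservation of the compatibility conditions $\tilde g(0)=\tilde g_t(0)=0$ under weak convergence, where you argue via continuity of the trace-at-zero maps, and the paper does not explicitly split $k(p_n^2)_{tt}$ but refers to the product limit directly.
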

\begin{proof}
The proof follows via the direct method of calculus of variations; see, for example,~\cite[Ch.\ 9.2]{manzoni2021optimal}. We first note that $ \spacegad \times \spacefad \neq \emptyset$. Since $J$ is non-negative, the infimum 
\[
\tilde{j} = \displaystyle \inf_{(g,f)\in \spacegad \times \Xfad} J(\controltostate(g,f), g, f) 
\]
 exists.  Let $\left\{(\gm, \fm)\right\}_{m\geq 1} \subset \spacegad \times \spacefad$
	be a minimizing sequence, such that 
	\begin{equation}
	\tildej = 	\lim_{m \rightarrow \infty} J(\controltostate(\gm, \fm), \gm, \fm).
	\end{equation}
Then $\pressurem=\controltostate(\gm, \fm)$ is the solution of
	\begin{equation} \label{ibvp West m}
		\left\{    
		\begin{aligned}
			&((1-2k \pressurem)\pressurem_t)_t-\csq \Delta \pressurem - b \Delta \ptalpha \pressurem=\fm  \quad &&\text{in} \ \Omega \times (0,T),  \\
			& \frac{\partial {\pressurem}}{\partial n}=\gm \quad &&\text{on} \ \Gn \times (0,T), \\
			&(\pressurem, \pressurem_t)= (0,0) &&\text{on} \ \Gamma \times \{0\},
		\end{aligned}
		\right.   
	\end{equation}
	in the sense of Theorem~\ref{thm: wellp forward}.  On account of $\gm \in \spacegad$, $\fm \in \spacefad$, and Theorem~\ref{thm: wellp forward}, we have the uniform bounds
	\begin{equation}
\|\gm\|_{\spaceg}\leq L_1, \quad  \|\fm\|_{\spacef} \leq L_2, \quad	 \|\pressurem\|_{\Xp} \lesssim_T \|\gm\|_{\spaceg}+ \|\fm\|_{\spacef} \lesssim_T \gconst+\fconst.
	\end{equation}
Therefore, using the Banach--Alaoglu theorem, we may extract weakly(-star) convergent subsequences, that we do not relabel, such that
	\begin{equation} \label{weak conv pressurem}
		\begin{aligned} 
			\pressurem &\overset{\ast}{\rightharpoonup} \pstar \quad  && \text{in} \quad L^\infty(0,T; \Hthree), \quad
			&&\pressurem_t \overset{\ast}{\rightharpoonup} \pstar_t \quad  && \text{in} \quad  L^\infty(0,T; \Htwo), \\
			\ptalpha \pressurem &{\rightharpoonup} \ptalpha \pstar \quad  && \text{in} \quad L^2(0,T; \Hthree),\quad
			&&\pressurem_{tt} {\rightharpoonup}  \pstar_{tt} \quad &&  \text{in} \quad L^2(0,T; \Hone),
		\end{aligned}
	\end{equation}
	and
	\begin{equation} \label{weak conv gm}
		\begin{aligned}
			\gm &\overset{\ast}{\rightharpoonup} \tildeg \quad &&\text{in} \quad L^\infty(0,T; \HthreehalfG), \quad
			&&\gm_t  \overset{\ast}{\rightharpoonup} \tildeg_t && \text{in} \quad L^\infty(0,T; \HonehalfG), \\
			\ptalpha \gm &{\rightharpoonup} \, \ptalpha \tildeg&& \text{in} \quad L^2(0,T; \HthreehalfG), \quad
			&&\gm_{tt} {\rightharpoonup}\, \tildeg_{tt} && \text{in} \quad  L^2(0,T; \HneghalfG),
		\end{aligned}
	\end{equation}
	as well as
	\begin{equation}
		\begin{aligned}
			\fm & \rightharpoonup  \tildef &&\text{in} \quad L^2(0,T; \Htwo)
		\end{aligned}
	\end{equation}
	as $m \rightarrow \infty$. Further,  \eqref{weak conv pressurem} and the Aubin--Lions compactness theorem implies the following strong convergence:
	\begin{equation} \label{strong convergence pressurem}
		\begin{aligned}
			\pressurem &\rightarrow \pstar \  && \text{in} \ \, C([0,T]; \Htwo), \quad
			\pressurem_t \rightarrow \pstar_t \   \text{in} \ \,  C([0,T]; \Hone).
		\end{aligned}
	\end{equation}
	The weak lower semi-continuity of norms guarantees that 
	\[
	\|\tildeg\| \leq L_1,  \quad  \|\tildef\| \leq L_2.
	\]
	Further, \eqref{weak conv gm} and and \cite[Lemma 3.1.7]{zheng2004nonlinear} imply
	\begin{equation}
		\begin{aligned}
			\gm(0) &\overset{\ast}{\rightharpoonup}  \tildeg(0) \  && \text{in} \ \, \HonehalfG, \quad
			\gm_t(0) \overset{\ast}{\rightharpoonup}  \tildeg_t(0) \   \text{in} \ \, \HneghalfG.
		\end{aligned}
	\end{equation}
	By the uniqueness of limits, we have $\tildeg(0)=\tildeg_t(0)=0$. Thus, we conclude that $(\tildeg,\tildef) \in \spacegad \times \spacefad$. \\
	\indent	Now, to show that $\pstar = \controltostate(\tildeg, \tildef)$, we will pass to the limit $m \rightarrow \infty$ in the weak form of \eqref{ibvp West m}.  Since $\pressurem \rightarrow \pstar$ in $C([0,T];  \Htwo)$ strongly and $	\pressurem_{tt} \rightharpoonup  \pstar_{tt}$ in $L^2(0,T; \Hone)$ weakly, we have
	\begin{equation} \label{product limits}
 \pressurem \pressurem_{tt} \rightharpoonup \pstar \pstar_{tt} \qquad \text{in } \ L^2(0,T; \Hone).
	\end{equation}
The remaining terms can be handled analogously or in a simpler manner as $m \rightarrow \infty$, to conclude that $\pstar$ solves \eqref{ibvp West m}. 
\\ 
\indent Since $\pressurem \rightarrow \pstar$ in $C([0,T]; \Hone)$ strongly, we also have
	\begin{equation}
		\pressurem(T) \rightarrow \pstar(T) \quad \text{in} \ \Ltwo
	\end{equation}
	as $m \rightarrow \infty$. For the statement to follow, we then need the   lower semi-continuity of $J$:
	\begin{equation} \label{lsc}
		\begin{aligned}
			J(\pstar, \tildeg, \tildef) 
			\leq&\,	\liminf_{m \rightarrow \infty} 	J(\pressurem, \gm, \fm) \leq \tilde{j},
		\end{aligned}
	\end{equation}
	which holds due to the weak lower semi-continuity property of norms. By the definition of infimum, we conclude that
	\begin{equation}
			J(\pstar, \tildeg, \tildef) = \tilde{j},
	\end{equation}
which shows optimality.	
\end{proof}
Because the state problem is nonlinear, and the reduced cost functional thus non-convex, we cannot expect the uniqueness of minimizers without imposing additional assumptions; we refer to the discussion in~\cite[Sec.\ 4.4]{troltzsch2024optimal} for further details on this topic.
\subsection{Stability with respect to perturbations}
We next discuss the stability of minimizers with respect to perturbations in the targeted pressure distribution. Subsequently, motivated also by numerical simulations, we consider a setting with approximate optimal controls $(\ggamma, f_\gamma)$, such that
	\begin{equation} 
	J_\gamma^{\pdgamma}(\controltostate(\ggamma, \fgamma), \ggamma, \fgamma) \leq 	J_\gamma^{\pdgamma}(\controltostate(g, f), g, f)+o(\gamma)
	\end{equation}  
and  discuss the limiting behavior as $\gamma \searrow 0$.
\begin{proposition}[Stability with respect to perturbations in $\pd$]
	Let the assumptions of Theorem~\ref{thm: existence opt control} hold. Let $\eta>0$ in the regularizing functional \eqref{def calR}. If $\pressuremd \rightarrow \pd$ in $C([0,T]; \Ltwo)$ as $m \rightarrow \infty$, 
	then the corresponding minimizers $\{(\gm, \fm)\}_{m \geq 1}$ of 
	\begin{equation}
		\begin{aligned} 
			&J^{\pressuremd}(\controltostate(g,f), g, f)\\
			 =&\, \begin{multlined}[t] \frac{\nu}{2}\|\controltostate(g,f)-\pressuremd\|_{L^2(L^2(\Omega_0))}^2 +\frac{1-\nu}{2}\|\controltostate(g,f)(T)-\pressuremd(T)\|^2_{L^2(\Omega_0)} +\calR(g,f)
			\end{multlined}
		\end{aligned}
	\end{equation}
	over $\spacegad \times \spacefad$ have a strongly convergent subsequence in $C([0,T]; \HonehalfG) \times L^2(0,T; \Ltwo)$ and the limit of each convergent subsequence as $m \rightarrow \infty$ is a minimizer of $	J^{\pd}(\controltostate(g,f), g, f)$.
\end{proposition}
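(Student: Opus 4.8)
The plan is to run the direct method exactly as in the proof of Theorem~\ref{thm: existence opt control}, and then add a compactness step that upgrades the weak convergence of the controls to strong convergence in the asserted topology. Writing $\pressurem = \controltostate(\gm,\fm)$, I would first extract, along a subsequence (not relabelled), weak(-star) limits $\gm \overset{\ast}{\rightharpoonup} \tildeg$ in $L^\infty(0,T;\HthreehalfG)$, $\gm_t \overset{\ast}{\rightharpoonup} \tildeg_t$ in $L^\infty(0,T;\HonehalfG)$, $\gm_{tt} \rightharpoonup \tildeg_{tt}$ in $L^2(0,T;\HneghalfG)$, $\fm \rightharpoonup \tildef$ in $\Xf$, together with the weak(-star) limits of $\pressurem$ in $\Xp$ and the strong limits $\pressurem \to \pstar$ in $C([0,T];\Htwo)$ and $\pressurem_t \to \pstar_t$ in $C([0,T];\Hone)$ furnished by Aubin--Lions. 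Passing to the limit in the state equation, exactly as in Theorem~\ref{thm: existence opt control} (the nonlinear product term being handled via \eqref{product limits}), identifies $\pstar = \controltostate(\tildeg,\tildef)$, and \cite[Lemma 3.1.7]{zheng2004nonlinear} applied to the traces at $t=0$ gives $(\tildeg,\tildef) \in \spacegad \times \spacefad$.

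The next step is to show that $(\tildeg,\tildef)$ minimizes $J^{\pd}(\controltostate(\cdot,\cdot),\cdot,\cdot)$ and that the functional values converge. Since $\pressuremd \to \pd$ in $C([0,T];\Ltwo)$ and $\pressurem \to \pstar$ in $C([0,T];\Htwo) \hookrightarrow C([0,T];\Ltwo)$, the discrepancies $\pressurem-\pressuremd$ converge strongly in $L^2(0,T;L^2(\Omega_0))$ and $(\pressurem-\pressuremd)(T)$ strongly in $L^2(\Omega_0)$; hence the tracking part of $J^{\pressuremd}(\pressurem,\gm,\fm)$ converges to that of $J^{\pd}(\pstar,\tildeg,\tildef)$. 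Combining this with the weak lower semicontinuity $\liminf_m \calR(\gm,\fm) \ge \calR(\tildeg,\tildef)$ yields
\begin{equation}
	J^{\pd}(\pstar,\tildeg,\tildef) \;\le\; \liminf_{m\to\infty} J^{\pressuremd}(\pressurem,\gm,\fm).
\end{equation}
On the other hand, for every admissible $(g,f)$ the minimality of $(\gm,\fm)$ gives $J^{\pressuremd}(\pressurem,\gm,\fm) \le J^{\pressuremd}(\controltostate(g,f),g,f)$, whose right-hand side converges to $J^{\pd}(\controltostate(g,f),g,f)$ because $\pressuremd \to \pd$; hence $\limsup_m J^{\pressuremd}(\pressurem,\gm,\fm) \le J^{\pd}(\controltostate(g,f),g,f)$. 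The two estimates show $J^{\pd}(\pstar,\tildeg,\tildef) \le J^{\pd}(\controltostate(g,f),g,f)$ for all admissible $(g,f)$, i.e.\ $(\tildeg,\tildef)$ is optimal for $J^{\pd}$, and taking $(g,f)=(\tildeg,\tildef)$ forces
\begin{equation}
	\lim_{m\to\infty} J^{\pressuremd}(\pressurem,\gm,\fm) = J^{\pd}(\pstar,\tildeg,\tildef).
\end{equation}

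The final step is the strong convergence of the controls. Subtracting the already established convergence of the tracking part from the equality of functional values gives $\calR(\gm,\fm) \to \calR(\tildeg,\tildef)$. Since $\calR$ is the sum of the two nonnegative quadratic terms $\tfrac{\gamma}{2}\|\cdot\|^2_{L^2(0,T;L^2(\partial\Omega))}$ and $\tfrac{\eta}{2}\|\cdot\|^2_{\LtwoLtwo}$, each weakly lower semicontinuous along $\gm \rightharpoonup \tildeg$ and $\fm \rightharpoonup \tildef$, and since $\eta>0$, passing to a further subsequence along which both norms converge forces $\|\fm\|_{\LtwoLtwo} \to \|\tildef\|_{\LtwoLtwo}$ (the nonnegative excesses must sum to zero); as $L^2(0,T;\Ltwo)$ is a Hilbert space, weak convergence plus norm convergence gives $\fm \to \tildef$ strongly in $L^2(0,T;\Ltwo)$. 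For the boundary controls I would instead use compactness directly: $\{\gm\}$ is bounded in $L^\infty(0,T;\HthreehalfG)$ with $\{\gm_t\}$ bounded in $L^\infty(0,T;\HonehalfG)$, and $\HthreehalfG \hookrightarrow\hookrightarrow \HonehalfG$, so the Aubin--Lions--Simon lemma \cite{simon1986compact} yields relative compactness of $\{\gm\}$ in $C([0,T];\HonehalfG)$, hence $\gm \to \tildeg$ strongly there along a subsequence. A standard subsequence-of-a-subsequence argument then transfers these conclusions, together with the optimality of the limit, to any convergent subsequence of the original minimizing family.

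I expect the strong convergence of the \emph{distributed} controls to be the crux: unlike the boundary controls, there is no a priori bound on $\partial_t \fm$, so no compact embedding is available, and strong $L^2$-convergence must be manufactured from the regularization term via the equality of functional values and the Radon--Riesz property --- which is precisely where the hypothesis $\eta>0$ enters.
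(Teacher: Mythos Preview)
Your proposal is correct and follows essentially the same route as the paper: extraction of weak(-star) limits, Aubin--Lions for both the states and the boundary controls, passage to the limit in the state equation, the liminf/limsup sandwich to obtain optimality and equality of functional values, and then the Radon--Riesz property for strong convergence of $\fm$. The only cosmetic difference is that the paper packages the last step as a proof by contradiction (assuming $\limsup\|\fm\|_{\LtwoLtwo}>\|\tildef\|_{\LtwoLtwo}$ and deriving a contradiction with lower semicontinuity), whereas you deduce $\calR(\gm,\fm)\to\calR(\tildeg,\tildef)$ directly and split the sum; both are standard and equivalent.
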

\begin{proof} 
	The proof follows by adapting ideas from~\cite[Theorem 2.1]{engl1989convergence}. Since $\{(\gm, \fm)\}_{m \geq 1}$ are minimizers, we have
	\begin{equation} \label{ineq}
		\begin{aligned}
			J^{\pressuremd}(\controltostate(\gm, \fm), \gm, \fm) \leq 	J^{\pressuremd}(\controltostate(g,f), g, f)
		\end{aligned}
	\end{equation}
	for any $(g, f) \in \spacegad \times \spacefad$. Let $\pressurem = \controltostate(\gm, \fm)$. Thanks to the uniform boundedness of $\{(\gm, \fm)\}_{m \geq 1}$ and $ \{\pressurem\}_{m \geq 1}$ (the latter, similarly to before, follows by Theorem~\ref{thm: wellp forward}), there exist subsequences, not relabeled, such that
	\begin{equation} \label{weak conv pressurem 1}
		\begin{aligned} 
			\pressurem &\overset{\ast}{\rightharpoonup} \pstar \quad  && \text{in} \quad L^\infty(0,T; \Hthree), \qquad
			&&\pressurem_t \overset{\ast}{\rightharpoonup} \pstar_t \quad  &&\text{in} \quad  L^\infty(0,T; \Htwo), \\
			\ptalpha \pressurem &{\rightharpoonup} \ptalpha \pstar \quad  && \text{in} \quad L^2(0,T; \Hthree),\qquad
			&&\pressurem_{tt} {\rightharpoonup}  \pstar_{tt} \quad   &&\text{in} \quad L^2(0,T; \Hone).
		\end{aligned}
	\end{equation}
	and
	\begin{equation} \label{weak conv gm 1}
		\begin{aligned}
			\gm &\overset{\ast}{\rightharpoonup} \tildeg \quad &&\text{in} \quad L^\infty(0,T; \HthreehalfG), \qquad
			&&\gm_t  \overset{\ast}{\rightharpoonup} \tildeg_t  &&\text{in} \quad L^\infty(0,T; \HonehalfG), \\
			\ptalpha \gm &{\rightharpoonup} \, \ptalpha \tildeg&& \text{in} \quad L^2(0,T; \HthreehalfG) \qquad
			&&\gm_{tt} {\rightharpoonup}\, \tildeg_{tt}  &&\text{in} \quad  L^2(0,T; \HneghalfG),
		\end{aligned}
	\end{equation}
	as well as
$\fm  \rightharpoonup  \tildef$ in $L^2(0,T; \Htwo)$
	as $m \rightarrow \infty$. 	By the Aubin--Lions theorem, we have the strong convergence along a subsequence
	\begin{equation}
		\gm \rightarrow \tildeg \ \text{ in } \ C([0,T]; \HonehalfG). 
	\end{equation}

	 We can then pass to the limit in the weak form of the problem solved by $\pressurem$ as $m \rightarrow \infty$ to obtain $\pstar = \controltostate(\tildeg, \tildef)$, analogously to the proof of Theorem~\ref{thm: existence opt control}. \\
\indent	By the weak lower semi-continuity of norms and \eqref{ineq},
	\begin{equation} \label{ineqq}
		\begin{aligned}
			J^{\pd}(\controltostate(\tildeg, \tildef), \tildeg, \tildef) \leq&\, \liminf_{m \rightarrow \infty} 	J^{\pressuremd}(\controltostate(\gm, \fm), \gm, \fm) \\
			\leq&\, \limsup_{m \rightarrow \infty} 	J^{\pressuremd}(\controltostate(\gm, \fm), \gm, \fm) \\
			\leq&\, \lim_{m \rightarrow \infty} 	J^{\pressuremd}(\controltostate(g, f), g, f) 
			=	J^{\pd}(\controltostate(g, f), g, f),
		\end{aligned}
	\end{equation}
	for all $(g,f) \in \spacegad \times \spacefad$, where we have used the strong convergence of $\pressuremd$ in the last step. This inequality implies that $(\tildeg, \tildef)$ is a minimizer and, combined with \eqref{ineq}, that
	\begin{equation} \label{id}
		\begin{aligned}
			\lim_{m \rightarrow \infty} 	J^{\pressuremd}(\controltostate(\gm, \fm), \gm, \fm) = 	J^{\pd}(\controltostate(\tildeg, \tildef), \tildeg, \tildef).
		\end{aligned}
	\end{equation}
	Indeed, from \eqref{ineq} we have, because $(\tildeg, \tildef) \in \spacegad \times \spacefad$, 	\begin{equation} 
		\begin{aligned}
			J^{\pressuremd}(\controltostate(\gm, \fm), \gm, \fm) \leq 	J^{\pressuremd}(\controltostate(\tildeg, \tildef), \tildeg, \tildef).
		\end{aligned}
	\end{equation}
	From here,
		\begin{equation} 
		\begin{aligned}
		\limsup_{m \rightarrow \infty} 	J^{\pressuremd}(\controltostate(\gm, \fm), \gm, \fm) \leq&\, \limsup_{m \rightarrow \infty} 	J^{\pressuremd}(\controltostate(\tildeg, \tildef), \tildeg, \tildef)\\
		=&\, \lim_{m \rightarrow \infty} 	J^{\pressuremd}(\controltostate(\tildeg, \tildef), \tildeg, \tildef)
				=	J^{\pd}(\controltostate(\tildeg, \tildef), \tildeg, \tildef).
		\end{aligned}
	\end{equation}
	On the other hand, due to \eqref{ineqq},
		\begin{equation} 
		\begin{aligned}
			J^{\pd}(\controltostate(\tildeg, \tildef), \tildeg, \tildef) \leq \liminf_{m \rightarrow \infty} 	J^{\pressuremd}(\controltostate(\gm, \fm), \gm, \fm). 
		\end{aligned}
	\end{equation}
	These two inequalities combined yield
	\begin{equation}
		\limsup_{m \rightarrow \infty} 	J^{\pressuremd}(\controltostate(\gm, \fm), \gm, \fm) \leq 	J^{\pd}(\controltostate(\tildeg, \tildef), \tildeg, \tildef) \leq	\liminf_{m \rightarrow \infty} 	J^{\pressuremd}(\controltostate(\gm, \fm), \gm, \fm),
	\end{equation}
	which allows us to conclude that \eqref{id} holds.\\	
\indent 	In the last step, we prove strong convergence of $\fm$ in $\LtwoTLtwo$. Assume that the opposite is true, that is, $\fm \nrightarrow \tildef$ in $L^2(0,T; \Ltwo)$. Since $\|\fm\|_{L^2(\Ltwo)}$ is bounded,  then
	\begin{equation}
		C \coloneqq \limsup_{m \rightarrow \infty} \|\fm\|_{\LtwoLtwo} > \|\tildef\|_{\LtwoLtwo}
	\end{equation}
	and there exists a subsequence of $\{(\gm, \fm)\}_{m\geq 1}$, denoted $\{(\gn, \fn)\}_{n \geq 1}$, for which   
	\[
	\|\fn\|_{\LtwoLtwo} \rightarrow C
	\]
	 as $n \rightarrow \infty$.
	 Then \eqref{id}  implies
	\begin{equation}
		\begin{aligned}
					&\begin{multlined}[t]\lim_{n \rightarrow \infty} \left(	J^{\pressurend}(\controltostate(\gn, \fn), \gn, \fn) - \frac{\eta}{2}\|\fn\|_{\LtwoLtwo}^2 \right)
			\end{multlined}
			\\
			=&\, \begin{multlined}[t]\frac{\nu}{2}\intTO (\controltostate(\tildeg, \tildef)-\pd)^2 \chi_{\Omega_0} \dxt+\frac{1-\nu}{2}\intO (\controltostate(\tildeg, \tildef)(T)-\pd(T))^2 \chi_{\Omega_0} \dxt \\
			+ \frac{\gamma}{2} \|\tildeg\|^2_{L^2(\LtwoG)} + \frac{\eta}{2}\left(\|\tilde{f}\|^2_{\LtwoLtwo}-C^2\right) \end{multlined} \\
			<&\, 	 \begin{multlined}[t]\frac{\nu}{2}\intTO (\controltostate(\tildeg, \tildef)-\pd)^2 \chi_{\Omega_0} \dxt+\frac{1-\nu}{2}\intO (\controltostate(\tildeg, \tildef)(T)-\pd(T))^2 \chi_{\Omega_0} \dxt \\
			 + \frac{\gamma}{2} \|\tildeg\|^2_{L^2(\LtwoG)},
			\end{multlined}
		\end{aligned}
	\end{equation}
	which is in contradiction with the lower semi-continuity property of norms. Therefore, $\fm \rightarrow \tildef$   in $L^2(0,T; \Ltwo)$.
\end{proof}
We next discuss the vanishing regularization limit. To simplify the exposition, we set $\eta=\gamma$.
\begin{proposition}
	Let the assumptions of Theorem~\ref{thm: existence opt control} hold and let $\eta=\gamma>0$ in the regularization functional \eqref{def calR}. Assume that $\pd$ is attainable, that is, there exists $(\gdagger, \fdagger)$, such that $\controltostate(\gdagger, \fdagger)=\pd$. 	Let $(\ggamma, \fgamma) \in \spacegad \times \spacefad$ be defined as an approximate minimizer in the sense that
	\begin{equation} \label{approx}
		J_\gamma^{\pdgamma}(\controltostate(\ggamma, \fgamma), \ggamma, \fgamma) \leq 	J_\gamma^{\pdgamma}(\controltostate(g, f), g, f)+\zeta(\gamma)
	\end{equation} 
holds for every $(g,f) \in \spacegad \times \spacefad$. Then for any sequence of regularizing parameters $\{\gammam\}_{m \geq 1}$ with
	\begin{equation} \label{asssumed m convergence}
		\begin{aligned}
			&	\gammam \rightarrow 0 \ \text{as}\ m \rightarrow \infty, \quad &&\zeta(\gammam) = o(\gammam), \\[1mm]
			& \|\pdgammam-\pd\|_{L^2(\Ltwo)}= o(\sqrt{\gammam}), \quad  &&\|\pdgammam(T)-\pd(T)\|_{\Ltwo}= o(\sqrt{\gammam}), 
		\end{aligned}
	\end{equation}
	the sequence $\{(\ggammam, \fgammam)\}_{m \geq 1}$ of approximate minimizers has a strongly convergent subsequence \sloppy in $L^2(0,T; \LtwoG) \times L^2(0,T; \Ltwo)$ and the limit $(\tildeg, \tildef)$ of every convergent subsequence satisfies $\controltostate(\tildeg, \tildef)=\pd$. 
\end{proposition}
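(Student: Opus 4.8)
The plan is to run the classical Tikhonov-type convergence argument for regularized problems (in the spirit of~\cite[Theorem 2.1]{engl1989convergence}): combine the uniform state bound of Theorem~\ref{thm: wellp forward} with the weak compactness of the admissible control balls, identify the weak limit, and finally upgrade to strong convergence of $\fgammam$ via a norm-minimality argument. Write $\pressurem\coloneqq\controltostate(\ggammam,\fgammam)$, and note $(\gdagger,\fdagger)\in\spacegad\times\spacefad$ since $\controltostate$ is only defined there. Because $\controltostate(\gdagger,\fdagger)=\pd$, the tracking part of $J_{\gammam}^{\pdgammam}(\controltostate(\gdagger,\fdagger),\gdagger,\fdagger)$ equals $\tfrac{\nu}{2}\|\pd-\pdgammam\|^2_{L^2(L^2(\Omega_0))}+\tfrac{1-\nu}{2}\|\pd(T)-\pdgammam(T)\|^2_{L^2(\Omega_0)}$, which is $o(\gammam)$ by \eqref{asssumed m convergence}. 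Inserting $(\gdagger,\fdagger)$ into \eqref{approx}, using $\zeta(\gammam)=o(\gammam)$, discarding the nonnegative tracking terms on the left, and dividing by $\gammam/2$ gives
\[
\|\ggammam\|^2_{L^2(\LtwoG)}+\|\fgammam\|^2_{\LtwoLtwo}\ \le\ \|\gdagger\|^2_{L^2(\LtwoG)}+\|\fdagger\|^2_{\LtwoLtwo}+o(1),
\]
so $\limsup_m\bigl(\|\ggammam\|^2_{L^2(\LtwoG)}+\|\fgammam\|^2_{\LtwoLtwo}\bigr)\le\|\gdagger\|^2_{L^2(\LtwoG)}+\|\fdagger\|^2_{\LtwoLtwo}$. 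Keeping instead the tracking terms on the left, the same chain shows $\|\pressurem-\pdgammam\|_{L^2(L^2(\Omega_0))}\to0$ and $\|\pressurem(T)-\pdgammam(T)\|_{L^2(\Omega_0)}\to0$, hence, since $\pdgammam\to\pd$ (and $\pdgammam(T)\to\pd(T)$) by \eqref{asssumed m convergence}, also $\|\pressurem-\pd\|_{L^2(L^2(\Omega_0))}\to0$ and $\|\pressurem(T)-\pd(T)\|_{L^2(\Omega_0)}\to0$.

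\textbf{Compactness and passage to the limit.} The sequence $\{(\ggammam,\fgammam)\}$ lies in the bounded balls $\spacegad\times\spacefad$, and by Theorem~\ref{thm: wellp forward} $\{\pressurem\}$ is bounded in $\Xp$. Exactly as in the proof of Theorem~\ref{thm: existence opt control}, I would extract non-relabeled subsequences with $\ggammam\overset{\ast}{\rightharpoonup}\tildeg$ in $\spaceg$ (componentwise), $\fgammam\rightharpoonup\tildef$ in $\spacef$, $\pressurem\overset{\ast}{\rightharpoonup}\pstar$ in $\Xp$, and, by the Aubin--Lions lemma, the strong convergences $\pressurem\to\pstar$ in $C([0,T];\Htwo)$, $\pressurem_t\to\pstar_t$ in $C([0,T];\Hone)$, and $\ggammam\to\tildeg$ in $C([0,T];\HonehalfG)$. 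Passing to the limit in the weak form of \eqref{ibvp West m}, with the quasilinear term handled through $\pressurem\pressurem_{tt}\rightharpoonup\pstar\pstar_{tt}$ in $L^2(0,T;\Hone)$ as in \eqref{product limits} and the remaining terms as in the proof of Theorem~\ref{thm: existence opt control}, gives $\pstar=\controltostate(\tildeg,\tildef)$; closedness of the admissible sets (weak-$\ast$ limits, continuity of the initial traces preserving $\tildeg(0)=\tildeg_t(0)=0$, weak lower semicontinuity of norms) gives $(\tildeg,\tildef)\in\spacegad\times\spacefad$. Combining $\pressurem\to\pstar$ with the vanishing tracking error of the previous step, I conclude that $\controltostate(\tildeg,\tildef)$ coincides with $\pd$ in the sense of the tracking functional, i.e.\ $\controltostate(\tildeg,\tildef)=\pd$ (understood on $\Omega_0\times(0,T)$ when $\nu=1$, and at $t=T$ on $\Omega_0$ when $\nu=0$).

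\textbf{Strong convergence of the controls.} To get strong convergence in $L^2(0,T;\LtwoG)\times\LtwoLtwo$, I would apply \eqref{approx} once more, this time with the comparison element $(\tildeg,\tildef)\in\spacegad\times\spacefad$: since $\controltostate(\tildeg,\tildef)=\pstar$ agrees with $\pd$ on the tracking region, one has $\|\controltostate(\tildeg,\tildef)-\pdgammam\|_{L^2(L^2(\Omega_0))}=\|\pd-\pdgammam\|_{L^2(L^2(\Omega_0))}=o(\sqrt{\gammam})$ (and likewise at $t=T$), so the argument of the first step runs verbatim with $(\gdagger,\fdagger)$ replaced by $(\tildeg,\tildef)$ and yields $\limsup_m\bigl(\|\ggammam\|^2_{L^2(\LtwoG)}+\|\fgammam\|^2_{\LtwoLtwo}\bigr)\le\|\tildeg\|^2_{L^2(\LtwoG)}+\|\tildef\|^2_{\LtwoLtwo}$. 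On the other hand, weak lower semicontinuity of the norms together with the weak limits from the previous step gives the reverse inequality for the $\liminf$, so in fact $\|\ggammam\|^2_{L^2(\LtwoG)}+\|\fgammam\|^2_{\LtwoLtwo}\to\|\tildeg\|^2_{L^2(\LtwoG)}+\|\tildef\|^2_{\LtwoLtwo}$. Since $\ggammam\to\tildeg$ strongly in $C([0,T];\HonehalfG)$, hence in $L^2(0,T;\LtwoG)$, subtracting forces $\|\fgammam\|_{\LtwoLtwo}\to\|\tildef\|_{\LtwoLtwo}$; combined with $\fgammam\rightharpoonup\tildef$ in the Hilbert space $L^2(0,T;\Ltwo)$ this yields $\fgammam\to\tildef$ strongly in $L^2(0,T;\Ltwo)$, which completes the proof.

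\textbf{Main difficulty.} The one genuinely non-routine point is the last step. Strong convergence of $\fgammam$ cannot come from compactness, because $\fgammam$ is controlled only in $L^2(0,T;\Htwo)$ with no bound on its time derivative; it has to be squeezed out of the norm-minimality encoded in the Tikhonov term, which in turn presupposes that the weak limit $(\tildeg,\tildef)$ has already been shown to be admissible \emph{and} to reproduce $\pd$ on the tracking region, so that it is a legitimate comparison control in \eqref{approx}. The remaining ingredients — the a priori bounds and the passage to the limit in the quasilinear state equation — are direct adaptations of arguments already carried out in the proofs of Theorems~\ref{thm: wellp forward} and~\ref{thm: existence opt control}.
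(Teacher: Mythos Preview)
Your proof is correct and follows essentially the same Tikhonov-type argument as the paper: uniform boundedness in $\spacegad\times\spacefad$, weak compactness plus Aubin--Lions, identification of the limit by comparing with $(\gdagger,\fdagger)$ in \eqref{approx}, and then upgrading to strong convergence of $\fgammam$ by comparing with $(\tildeg,\tildef)$. The only cosmetic differences are that you run the a priori bound before the compactness step and deduce $\|\fgammam\|_{\LtwoLtwo}\to\|\tildef\|_{\LtwoLtwo}$ directly by subtracting the already-established strong convergence of $\ggammam$ from the combined norm limit, whereas the paper argues that last step by contradiction; your parenthetical remark that $\controltostate(\tildeg,\tildef)=\pd$ is only forced on the tracking region is in fact sharper than the paper's phrasing.
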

\begin{proof}
	The proof is based on~\cite[Theorem 2.3]{engl1989convergence}, which, in turn, employs techniques from~\cite[Sec.\ 3]{seidman1989well}. 
		Analogously to our previous arguments, we can leverage uniform boundedness of $\{(\ggammam, \fgammam)\}_{m \geq 1} \subset \spacegad \times \spacefad$ and $\{\controltostate(\ggammam, \fgammam)\}_{m \geq 1}$, together with suitable compact embeddings, to argue that there is a subsequence and $(\tildeg, \tildef) \in \spacegad \times \spacefad$, not relabeled, such that
		\begin{equation} \label{limits gamma zero}
				\begin{aligned}
						& \ggammam \rightarrow \tildeg \ &&\text{strongly in } \ L^2(0,T; \LtwoG),\\
						& \fgammam \rightharpoonup \tildef \ &&\text{weakly in } \ L^2(0,T; \Htwo),\\
						&\controltostate(\ggammam, \fgammam) \rightarrow \controltostate(\tildeg, \tildef) \ &&\text{strongly in } \LtwoTLtwo, \\
						&\controltostate(\ggammam, \fgammam)(T) \rightarrow \controltostate(\tildeg, \tildef)(T) \ &&\text{strongly in } \Ltwo, 
					\end{aligned}
			\end{equation}
as $m \rightarrow \infty$. \\
	\indent Next, thanks to the approximate minimization property in \eqref{approx}, by choosing $(g,f)=(\gdagger, \fdagger)$, we obtain
	\begin{equation} \label{ineq vanishing reg limit}
		\begin{aligned}
			J_{\gammam}^{\pdgammam}(\controltostate(\ggammam,\fgammam), \ggammam, \fgammam) \leq 	J_{\gammam}^{\pdgammam}(\controltostate(\gdagger, \fdagger), \gdagger, \fdagger)+\zeta(\gammam).
		\end{aligned}
	\end{equation}
	The right-hand side tends of \eqref{ineq vanishing reg limit} tends to zero as $m \rightarrow 0$ due to \eqref{asssumed m convergence} and the fact that $\controltostate(\gdagger, \fdagger)=\pd$. Hence,
	\begin{equation}
		\lim_{m \rightarrow \infty} J_{\gammam}^{\pdalpham}(\controltostate(\ggammam,\fgammam), \ggammam, \fgammam) =0. 
	\end{equation}
We thus conclude that $\{\controltostate(\ggammam, \fgammam)\}_{m \geq 1}$ converges to $\pd$ strongly in $\LtwoTLtwo$. By the uniqueness of limits, it must hold that $\controltostate(\tildeg, \tildef)= \pd$. \\
	 \indent It remains to show strong convergence of (a subsequence of) $\{\fgammam\}_{m \geq 1}$ in $\LtwoTLtwo$.  Taking $(g,f)=(\tildeg, \tildef)$ in \eqref{approx} and dividing by $\gammam/2$ yields
	\begin{equation}
		\begin{aligned}
		&\|\ggammam\|^2_{L^2(\LtwoG)}+\|\fgammam\|^2_{\LtwoLtwo}\\
		 \leq&\, \begin{multlined}[t] \frac{\nu}{\gammam} \|\controltostate(\tildeg, \tildef)-\pdgammam\|^2_{L^2(L^2(\Omega_0))} +\frac{1-\nu}{\gammam} \|\controltostate(\tildeg, \tildef)(T)-\pdgammam(T)\|^2_{L^2(\Omega_0)} \\
		 	+ \|\tildeg\|^2_{L^2(\LtwoG)}+\|\tildef\|^2_{\LtwoLtwo}+ \frac{2}{\gammam}\zeta(\gammam).
		 	\end{multlined}
		 	\end{aligned}
	\end{equation}
	Thanks to \eqref{asssumed m convergence}, the right-hand side is uniformly bounded and, because $\controltostate(\tildeg, \tildef)=\pd$, we have
	\begin{equation}
		\limsup_{m \rightarrow \infty}(\|\ggammam\|^2_{L^2(\LtwoG)}+\|\fgammam\|^2_{\LtwoLtwo}) \leq \|\tildeg\|^2_{L^2(\LtwoG)}+\|\tildef\|^2_{\LtwoLtwo}.
	\end{equation}
	We wish to show that $\|\fgammam\|_{\LtwoLtwo} \rightarrow \|\tildef\|_{\LtwoLtwo}$ as $m \rightarrow \infty$, which combined with the weak convergence in \eqref{limits gamma zero} will yield the claimed strong convergence. 
	 Suppose that
	\begin{equation}
			\liminf_{m \rightarrow \infty}(\|\ggammam\|^2_{L^2(\LtwoG)}+\|\fgammam\|^2_{\LtwoLtwo}) < \|\tildeg\|^2_{L^2(\LtwoG)}+\|\tildef\|^2_{\LtwoLtwo}.
	\end{equation}
 By \eqref{limits gamma zero} and the lower semi-continuity of norms, we would then have 
 \[
 \|\tildeg\|^2_{L^2(\LtwoG)}+\|\tildef\|_{\LtwoLtwo} <\|\tildeg\|^2_{L^2(\LtwoG)}+\|\tildef\|^2_{\LtwoLtwo}.
 \]
which is a contradiction. This concludes the proof.
\end{proof}

\section{First-order necessary optimality conditions} \label{sec: optimality conditions}
In this final section, we derive the necessary optimality conditions for the distributed and Neumann boundary control problems, treating each case separately.  We begin with the analysis of the adjoint problem.

\subsection{The adjoint problem} \label{sec: adjoint}
Under the assumptions of Theorem~\ref{thm: wellp forward}, let $p$ denote the solution of the forward Westervelt problem \eqref{ibvp West}. The adjoint pressure problem in strong form is formally given by 
\begin{equation}  \label{adjoint problem}
	\left\{	\begin{aligned}
		&(1-2kp)\padjtt- \csq \Delta  \padj-b\Delta \adjointptalpha \padj =  \nu (p - \pd) \chizero\quad &&\text{ in } \Omega \times (0,T),\\[1mm]
		&\partialn \left(\csq \padj + b \, \adjointptalpha \padj \right)=0 \quad &&\text{on } \ \Gn,\\[1mm]
		&\padj(T)=0, \quad \padjt(T)=-(1-\nu)\frac{p(T)-\pd(T)}{1-2kp(T)}\chizero \quad &&\text{on }  \Omega.
	\end{aligned} \right.
\end{equation}
Due to the space-localization of the source and final data via $\chi_{\Omega_0}$, the right-hand side and final data of the adjoint problem \eqref{adjoint problem}
lack sufficient spatial regularity to apply analogous arguments to those of Proposition~\ref{prop: wellp forward linear}. Instead we interpret \eqref{adjoint problem} in a weak sense where, after time reversal, we can apply Proposition~\ref{prop: wellp forward lower}.
\begin{proposition}
	Assume the hypotheses of Theorem~\ref{thm: wellp forward} hold. Then there exists a unique
	\begin{equation}
		\begin{aligned}
			\padj \in  \left\{ \padj \in  L^\infty(0,T; \Hone) \cap  \tau \Hsubalpha(0,T; \Hone) :  \padjt \in  L^{\infty}(0,T; \Ltwo) \right\},
		\end{aligned}	
	\end{equation}
	which solves 
	\begin{equation} \label{adjoint problem weak}
		\begin{aligned}
			\begin{multlined}[t]-	\intTO ((1-2kp) \varphi)_t \,\padjt  \dxt +\intTO (\csq \nabla \padj +b \nabla \adjointptalpha \padj)\cdot \nabla \varphi \dxt \\
				=    (1-\nu)  \intO (p(T)-\pd(T)) \chizero \varphi(T) \dx+ \nu \intTO (p - \pd) \chizero  \varphi \dxt
			\end{multlined}
		\end{aligned}
	\end{equation}
	with $\padj(T)=0$, for all test functions $\varphi \in L^2(0,T; \Hone) \cap H^1(0,T; \Ltwo)$ satisfying $\varphi(0)=0$.
	Furthermore, the following estimate holds: 
	\begin{equation}
		\begin{aligned}
			\| \padj\|_{\spaceadj} \leq C(T, p) \left( (1-\nu)\|p(T)-\pd(T)\|_{L^2(\Omega_0)}	+ \nu \|p-\pd\|_{L^2(L^2(\Omega_0))}\right),
		\end{aligned}
	\end{equation}
	where
	\begin{equation}
		C(T, p) =  C_1\exp \{C_2 T (1+ \|p\|_{W^{1,\infty}(\Linf)})\}, \quad C_1, C_2>0,
	\end{equation}
	and the involved constants do not depend on $b$ or $\alpha$.
\end{proposition}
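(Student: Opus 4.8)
The plan is to reduce the backward adjoint problem~\eqref{adjoint problem} to a forward linearized wave problem of the form~\eqref{linearized problem} by reversing time, and then to invoke Proposition~\ref{prop: wellp forward lower} together with Lemma~\ref{lemma: uniqueness}. I would set $q \coloneqq \tau\padj$, that is, $q(x,t) = \padj(x,T-t)$, so that $q_t = -\tau(\padjt)$, $q_{tt} = \tau(\padjtt)$, while the terminal conditions on $\padj$ become the initial conditions $q(0) = 0$ and $q_t(0) = u_1 \coloneqq -\padjt(T) = (1-\nu)\frac{p(T)-\pd(T)}{1-2kp(T)}\chizero$. Applying $\tau$ to the adjoint equation, using that $\tau$ commutes with $\Delta$ and with multiplication by $k$, and invoking the time-reversal identity~\eqref{identity timeflip} in the form $\tau(\adjointptalpha\padj) = \ptalpha(\tau\padj) = \ptalpha q$, one finds that $q$ must solve~\eqref{linearized problem} with
\[
\fraka = 1-2k\,\tau p, \qquad \frakl = 0, \qquad \frakn = 0, \qquad g = 0, \qquad F = \nu(\tau p - \tau\pd)\chizero,
\]
and $(q,q_t)\vert_{t=0} = (0,u_1)$. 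Carrying out the substitution $t\mapsto T-t$ term by term in the weak formulation~\eqref{weak linearized problem low} of this reduced problem, and using the same identities, shows conversely that $q$ solves~\eqref{weak linearized problem low} for all test functions vanishing at $t=T$ precisely when $\padj = \tau q$ solves~\eqref{adjoint problem weak} for all test functions vanishing at $t=0$; under this correspondence the term $\intO\fraka(0)u_1\varphi(0)\dx$ produces the final-time term and $\intTO F\varphi\dxt$ produces the running source term on the right-hand side of~\eqref{adjoint problem weak}.

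Next I would check that the reduced data satisfy the hypotheses of Proposition~\ref{prop: wellp forward lower}. The coefficient $\fraka = 1-2k\,\tau p$ lies in $W^{1,\infty}(0,T;\Linf)$ because $k \in \Xk \hookrightarrow \Linf$ and $p \in \Xp \hookrightarrow W^{1,\infty}(0,T;\Linf)$ (using $\Htwo\hookrightarrow\Linf$ for $d\le 3$), and it satisfies $0<\ulfraka \le \fraka \le \olfraka$ since the corresponding bound for $1-2kp$ holds at all $t\in[0,T]$ by Theorem~\ref{thm: wellp forward}. The choices $\frakl\equiv 0$, $\frakn\equiv 0$, $g\equiv 0$ trivially meet the required regularity; $F = \nu(\tau p - \tau\pd)\chizero \in \LtwoTLtwo$ because $p \in L^\infty(0,T;\Linf)$ and $\pd \in C([0,T];\Ltwo)$; and $u_1 \in \Ltwo$ because $p(T) \in \Hthree \hookrightarrow \Linf$, $\pd(T) \in \Ltwo$, and $(1-2kp(T))^{-1} \in \Linf$ by the lower bound $\ulfraka>0$. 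Proposition~\ref{prop: wellp forward lower} then delivers a solution $q \in \Xlow$ obeying~\eqref{final est lower}. For uniqueness I would additionally note that $\fraka \in H^2(0,T;L^4)$ — a consequence of $p_{tt}\in L^2(0,T;\Hone)\hookrightarrow L^2(0,T;L^4)$ and $k\in\Linf$ — and that $\frakl\equiv 0\in H^1(0,T;L^3)$, so that Lemma~\ref{lemma: uniqueness} applies and forces $q$, hence $\padj = \tau q$, to be unique.

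It then remains to transfer the estimate. Since $\tau$ acts isometrically on $L^\infty(0,T;\Hone)$ and on $L^\infty(0,T;\Ltwo)$, and $\adjointptalpha\padj = \tau(\ptalpha q)$ gives $\|\adjointptalpha\padj\|_{L^2(0,T;\Hone)} = \|\ptalpha q\|_{L^2(0,T;\Hone)}$, one obtains $\padj \in \spaceadj$ with $\|\padj\|_{\spaceadj} = \|q\|_{\Xlow}$. Inserting this into~\eqref{final est lower}, noting that $\frakl=\frakn=0$ collapses the factor $\Lambda_0$ in~\eqref{def Lambda_0} to $\exp\{CT(1+\|\fraka\|_{W^{1,\infty}(\Linf)})\}\lesssim\exp\{CT(1+\|p\|_{W^{1,\infty}(\Linf)})\}$ (the smallness $\|k\|_{\Xk}\le\delta$ absorbing the $k$-dependent factor), together with $\|u_1\|_{\Ltwo}\le \ulfraka^{-1}(1-\nu)\|p(T)-\pd(T)\|_{L^2(\Omega_0)}$ and $\|F\|_{\LtwoTLtwo}=\nu\|p-\pd\|_{L^2(L^2(\Omega_0))}$ (both using that $\chizero$ restricts the spatial integral to $\Omega_0$ and that $\tau$ preserves the norm of $L^2(0,T;\cdot)$), and taking square roots, yields the stated bound with $C(T,p)=C_1\exp\{C_2T(1+\|p\|_{W^{1,\infty}(\Linf)})\}$; the independence of the constants of $b$ and $\alpha$ is inherited directly from Proposition~\ref{prop: wellp forward lower}.

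I expect the only genuinely delicate point to be the bookkeeping of the time reversal for the fractional term: one must apply~\eqref{identity timeflip} in the correct direction so that the backward operator $\adjointptalpha$ appearing in~\eqref{adjoint problem} turns exactly into the forward Djrbashian--Caputo operator $\ptalpha$ of~\eqref{linearized problem}, and one must match~\eqref{weak linearized problem low} and~\eqref{adjoint problem weak} term by term, taking care that the roles of the initial- and terminal-time conditions — for both the solution and the test functions — are interchanged. The remaining steps are routine Sobolev-embedding estimates.
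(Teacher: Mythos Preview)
Your proposal is correct and follows essentially the same approach as the paper: time-reverse the adjoint unknown to obtain a forward problem of the form~\eqref{linearized problem} with $\fraka=1-2k\,\tau p$, $\frakl=\frakn=0$, $g=0$, $F=\nu(\tau p-\tau\pd)\chizero$, and the terminal datum of $\padjt$ becoming the initial datum $u_1$, then apply Proposition~\ref{prop: wellp forward lower} for existence and the estimate and Lemma~\ref{lemma: uniqueness} for uniqueness. Your write-up is in fact more explicit than the paper's in verifying the regularity hypotheses (in particular the $H^2(0,T;\Lfour)$ check for $\fraka$) and in transferring the estimate via the isometry of $\tau$.
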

\begin{proof}
We analyze first the time reversed adjoint problem, given for $\tpadj = \tau \padj=\padj(T-t)$  by
	\begin{equation} \label{weak form adjoint reversed} 
		\begin{aligned}
			\begin{multlined}[t]-	\intTO ((1-2k\tp) \varphi)_t \,\tpadjt  \dxt +\intTO (\csq \nabla \tpadj+b \nabla \ptalpha \tpadj)\cdot \nabla \varphi \dxt \\
				=  (1-\nu) \intO  (\tp(0)-\tpd(0)) \chizero \varphi(0) \dx + \nu \intTO (\tp - \tpd) \chizero \varphi\dxt
			\end{multlined}
		\end{aligned}
	\end{equation}
	with $\tpadj(0)=0$, for all test functions $\varphi \in L^2(0,T; \Hone) \cap H^1(0,T; \Ltwo)$ satisfying $\varphi(T)=0$. 	The statement follows from Proposition~\ref{prop: wellp forward lower} together with Lemma~\ref{lemma: uniqueness} by choosing
	\begin{equation}
		\begin{aligned}
		&\fraka = 1-2k \tp, \quad   \frakl=\frakn=0, \quad F=\nu(\tp-\tpd)\chizero, \ g=0, \  u_1 =  -(1-\nu)\frac{\tp(0)-\tpd(0)}{\fraka(0)}\chizero,\\[1mm]
		&\Lambda_0(\fraka, \frakl,\frakn) =\, \exp\{CT(1+\|\fraka\|_{W^{1,\infty}(\Linf)})\}.
		\end{aligned} 
	\end{equation}
	 Indeed, by Theorem~\ref{thm: wellp forward}, we have 
\[
1-2k\tp \in W^{1,\infty}(0,T; \Linf) \cap H^2(0,T; \Lfour), \  \ 0< \ulfraka \leq  1-2k \tp  \leq \olfraka \ \text{ in } \Omega \times [0,T].
\] 
Similarly, $u_1 \in \Ltwo$ because $\tp(0)- \tpd(0)=p(T)-\pd(T) \in \Ltwo$ and $0< \ulfraka \leq \fraka(0) \leq \olfraka$ in $\Omega$. We then obtain the solution $\tpadj$ of \eqref{weak form adjoint reversed} by Proposition~\ref{prop: wellp forward lower} and of \eqref{adjoint problem weak} by setting $\padj = \tau \tpadj$.
\end{proof}

\subsection{Differentiability of the control-to-state mapping} Using the control-to-state operator \eqref{def controltostate}, consider again the reduced optimization problem
\begin{equation} \tag{\ref{reduced problem}}
	\begin{aligned}
		\text{Find } (\gstar, \fstar) \in \spacegad \times \spacefad, \text{ such that } \ j(\gstar, \fstar) = \inf_{(g,f) \in \spacegad \times \Xfad} j(g,f)
	\end{aligned}
\end{equation}
where $j= J(S(g,f),g, f)$, and $\spacegad$ and $\spacefad$ are defined in \eqref{def spaceg admissible} and \eqref{def spacef admissible}, respectively. To establish differentiability of $j$, we first show that differentiability of the control-to-state mapping $\controltostate$.
\begin{proposition} \label{prop: diff controltostate}
	Let the assumptions of Theorem~\ref{thm: wellp forward} hold. Then the control-to-state mapping $\controltostate$ defined in \eqref{def controltostate} is directionally differentiable 
	in the following sense. Let $(h, \phi) \in \spacegad \times \spacefad$ be such that $(g+ \eps h, f+\eps \phi) \in \spacegad \times \spacefad$ for $\eps \in [0, \bar{\eps})$. Define
	\begin{equation}
	\zeps = \dfrac{\peps-p}{\eps} ,\quad p=\controltostate(g, f),\ \text{ and } \ \peps= \controltostate(g+\eps h, f+\eps \phi).
	\end{equation}
Then there exists a subsequence $\{\zepsn\}_{n \in \N}$ of $\{\zeps\}_{\eps \in (0, \bar{\eps})}$, such that
	\begin{equation} \label{limits nonlinear_problem differentiability thm} 
	\begin{aligned}
		\zepsn &\overset{\ast}{\rightharpoonup} z \quad  && \text{in} \quad L^\infty(0,T; \Hone), \quad
		\zepsnt\overset{\ast}{\rightharpoonup} z_t \quad   \text{in} \quad  L^\infty(0,T; \Ltwo), \\
		\ptalpha \zepsn &{\rightharpoonup}\, \ptalpha z \quad  && \text{in} \quad L^2(0,T; \Hone),\\
	\end{aligned}
\end{equation}
	as $n \rightarrow \infty$. The limit $z$ satisfies 	
	\begin{equation} \label{limit nonlinear_problem differentiability thm}  
		\begin{aligned}
			\begin{multlined}[t]-	\intTO ((1-2k p)z)_{t}\, \varphi_t \dxt +\intTO (\csq \nabla z +b \nabla \ptalpha z)\cdot \nabla \varphi \dxt \\
			- \intTG (c^2 h+ b \ptalpha h) \phi \dGt   
				=   \intTO \phi \varphi \dxt
			\end{multlined}
		\end{aligned}
	\end{equation}
with $z(0)=0$, for all $\varphi \in L^2(0,T; \Hone) \cap H^1(0,T; \Ltwo)$ with $\varphi(T)=0$.
	
\end{proposition}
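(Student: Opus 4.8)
The plan is to recognize the difference quotient $\zeps$ as a weak solution of the linearized problem \eqref{linearized problem}, harvest $\eps$-uniform bounds from Proposition~\ref{prop: wellp forward lower}, extract a weakly convergent subsequence, and pass to the limit in the weak formulation. First I would write the state equation in the quasilinear form \eqref{West rewritten} for $\peps$ and for $p$, subtract, and divide by $\eps$. Using
\[
\tfrac{1}{\eps}\bigl[(1-2k\peps)\pepst-(1-2kp)\pt\bigr]=(1-2k\peps)\zepst-2k\pt\zeps
\]
and then differentiating in time, one sees that $\zeps$ solves \eqref{linearized problem} with
\[
\fraka=1-2k\peps, \quad \frakl=-2k(\pepst+\pt), \quad \frakn=-2k\ptt, \quad F=\phi,\quad g=h,
\]
Neumann datum $g=h$ (since $\partialn(\peps-p)=\eps h$), and homogeneous initial data $(\zeps,\zepst)\vert_{t=0}=(0,0)$; in particular $\zeps$ is a weak solution in the sense of \eqref{weak linearized problem low} with $u_1=0$.

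Next I would establish the $\eps$-uniform bounds. Because $(g+\eps h,f+\eps\phi)\in\spacegad\times\spacefad$ for every $\eps\in[0,\bar{\eps})$, Theorem~\ref{thm: wellp forward} yields $\|\peps\|_{\Xp},\,\|p\|_{\Xp}\lesssimT\gconst+\fconst$ uniformly in $\eps$, together with $\eps$-independent non-degeneracy constants $0<\ulfraka\le 1-2k\peps\le\olfraka$ (shrinking $\delta$ if needed). Combining this with $k\in\Xk\hookrightarrow\Linf$ and the embeddings $\Htwo\hookrightarrow\Linf$, $\Hone\hookrightarrow\Lsix$ gives $\eps$-uniform control of $\|\fraka\|_{W^{1,\infty}(\Linf)}$, $\|\frakl\|_{\LinfLinf}$ and $\|\frakn\|_{L^2(0,T;\Lthree)}$, so Proposition~\ref{prop: wellp forward lower} applies and \eqref{final est lower} gives $\zeps\in\Xlow$ with $\|\zeps\|_{\Xlow}\le C$ independently of $\eps$. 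Since $\peps-p=\eps\zeps$, this also shows $\peps\to p$ in $\Xlow$; interpolating the resulting $\LinfLtwo$ / $L^\infty(0,T;\Hone)$ decay against the uniform $\Xp$-bounds upgrades this to $\peps\to p$ and $\pepst\to\pt$ strongly in $\LinfLinf$.

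Then I would extract a sequence $\eps_n\to 0$ realizing the convergences in \eqref{limits nonlinear_problem differentiability thm}, identifying the weak limit of $\ptalpha\zepsn$ with $\ptalpha z$ exactly as in the proof of Proposition~\ref{prop: wellp forward lower} (cf.~\cite[Theorem 6.19]{jin2021fractional}), and pass to the limit $n\to\infty$ in the weak form \eqref{weak linearized problem low} for $\zepsn$: the $c^2\nabla$ and $b\nabla\ptalpha$ terms pass by weak $\LtwoLtwo$-convergence of $\nabla\zepsn$ and $\nabla\ptalpha\zepsn$; the $\fraka$- and $\frakl$-terms pass by pairing the strong $\LinfLinf$-convergence of $1-2k\peps$ and $2k(\pepst+\pt)$ against the weak $\LtwoLtwo$-convergence of $\zepsnt$; and the $\frakn$-term passes because $k\ptt\varphi\in L^1(0,T;\Ltwo)$ (H\"older with $\Hone\hookrightarrow\Lsix$), so that the weak-$*$ $\LinfLtwo$-convergence of $\zepsn$ suffices. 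A final integration by parts in time, using $\varphi(T)=0$ and $z(0)=0$, recasts the resulting identity into the compact form \eqref{limit nonlinear_problem differentiability thm}; and $z(0)=0$ itself follows since $\{\zepsn\}$ is bounded in $L^\infty(0,T;\Hone)$ with time derivatives bounded in $\LinfLtwo$, hence converges along a further subsequence in $C([0,T];\Ltwo)$, while $\zepsn(0)=0$.

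The main obstacle I expect is the $\eps$-uniformity in the second step: the linear estimate \eqref{final est lower} is only exploitable here because Theorem~\ref{thm: wellp forward} supplies $\Xp$-bounds and non-degeneracy constants for $\peps$ that do not depend on $\eps$, and the smallness of $\|k\|_{\Xk}$ has to be calibrated so that these survive along the whole family. A secondary delicate point is the limit passage in the variable-coefficient terms: the $L^\infty(0,T;\Hone)$-convergence of $\peps$ is not enough because $\Hone\not\hookrightarrow\Linf$ for $d\le 3$, so the interpolation step producing $\LinfLinf$-convergence of $\peps$ and $\pepst$ is indispensable; everything else is routine weak/strong pairing and an integration by parts in time.
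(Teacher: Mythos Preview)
Your proposal is correct and follows essentially the same route as the paper: identify $\zeps$ as a solution of a linearized wave problem with coefficients built from $p$ and $\peps$, invoke the low-regularity energy estimate of Proposition~\ref{prop: wellp forward lower} to get an $\eps$-uniform $\Xlow$-bound, extract a weakly convergent subsequence, and pass to the limit in the weak form. The only non-essential difference is the algebraic splitting: you write $\fraka=1-2k\peps$, $\frakl=-2k(\pepst+\pt)$, $\frakn=-2k\ptt$, whereas the paper uses $\fraka=1-2kp$, $\frakl=-2k\pepst$, $\frakn=-2k\pepstt$ (both decompositions arise from the two ways of factoring $\peps\pepst-p\pt$). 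The paper's choice has the minor advantage that $\fraka$ is $\eps$-independent, so the limit in the leading term is immediate; your choice requires the interpolation step you describe to get $\peps\to p$ in $\LinfLinf$, which you handle correctly. Your treatment is otherwise slightly more detailed than the paper's (explicit interpolation, explicit verification of $z(0)=0$), but the underlying argument is the same.
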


\begin{proof}
Thanks to Theorem~\ref{thm: wellp forward}, the family $\{\peps\}_{\eps \geq 0}$ is uniformly bounded:
\begin{equation} \label{bound peps}
	\|\peps\|_{\Xp} \lesssimT \|g+\eps h\|_{\spaceg} + \|f+ \eps \phi\|_{\spacef} \lesssimT (1+ \bar{\eps})(L_1+L_2).
\end{equation}
 The quotient $\zeps = \dfrac{\peps-p}{\eps} $ satisfies 
	\begin{equation} \label{limit nonlinear_problem differentiability thm weak}  
	\begin{aligned}
		\begin{multlined}[t]-	\intTO (1-2kp)  \zeps_t \varphi_t \dxt +\intTO 2k \zeps  \peps_t \varphi_t \dxt \\+\intTO (\csq \nabla \zeps +b \nabla \ptalpha \zeps)\cdot \nabla \varphi \dxt 
			- \intTG (c^2 h+ b \ptalpha h) \varphi \dGt    \\
			=   \intTO \phi \varphi \dxt
		\end{multlined}
	\end{aligned}
\end{equation}
	for all $\varphi \in L^2(0,T; \Hone) \cap H^1(0,T; \Ltwo)$ with $\varphi(T)=0$.	We now apply estimate \eqref{final est lower}  from Proposition~\ref{prop: wellp forward lower} by choosing
	\begin{equation}
		\frakm = 1-2kp,  \quad  \frakl = -2k \pepst, \quad  \frakn = -2k \pepstt, \quad F=\phi, \quad g = h, \ \text{and} \ u_1=0.
	\end{equation}
This yields
	\begin{equation} \label{est}
		\begin{aligned}
		\|\zeps\|_{\Xlow} \lesssim \Lambda_0(\fraka, \frakl, \frakn)\left( \|h\|_{H^2(\HneghalfG)}+\|\phi\|_{\LtwoLtwo}\right),
		\end{aligned}
	\end{equation}
	where $\Lambda_0$ is defined in \eqref{def Lambda_0}. Thanks to Theorem~\ref{thm: wellp forward} with the assumption $\|k\|_{\Xk} \leq \delta$, we obtain
	\begin{equation} 
	\begin{aligned}
		\Lambda_0(\fraka, \frakl, \frakn)= 	&\,	\exp\left\{CT(	1+\|\fraka\|_{W^{1, \infty}(\Linf)}+\|\frakl\|_{L^\infty(\Linf)}+\|\frakn\|^2_{L^2(\Lthree)})\right\}\\
			\lesssim&\,	\exp\left\{CT\left(	1+\delta \left(\|p\|_{W^{1, \infty}(\Linf)}+\|\pepst\|_{L^\infty(\Linf)}+\|\pepstt\|^2_{L^2(\Hone)} \right)\right)\right\}\\
				\lesssim&\,	\exp\left\{CT\left(	1+\delta \|\peps\|_{\Xp}  + \delta \|\peps\|^2_{\Xp}\right)\right\}.
	\end{aligned}
\end{equation}
Together with \eqref{bound peps}, this shows that $\Lambda_0$ is uniformly bounded.  Therefore, estimate \eqref{est} implies that $\{\opeps\}_{\eps}$ is uniformly bounded in $\Xlow $, defined in \eqref{def Xlow}. Consequently, there exists a subsequence $\{z^{\eps_n}\}_{n \in \N}$, such that the convergences in
		\eqref{limits nonlinear_problem differentiability thm} hold. 
		To pass to the limit in 
\eqref{limit nonlinear_problem differentiability thm weak} 	as $\epsn \rightarrow 0$, we use the weak convergence 
		\begin{equation}
			\begin{aligned}
			-	(1-2k p) \zepsnt + 2k \zeps  \peps_t\ \rightharpoonup \ 	-(1-2k p) z_t+ 2k z  \pt= -((1-2k p)z)_{t}
			\end{aligned}
		\end{equation}
		in $L^2(0,T; \Ltwo)$, which follows analogously to \eqref{product limits} and to~\cite[p.\ 749]{clason2009boundary}.  Passing to the limit in \eqref{limit nonlinear_problem differentiability thm weak}  as $\epsn \rightarrow 0$ therefore yields \eqref{limit nonlinear_problem differentiability thm}, as claimed.
		\end{proof}
		\subsection{Derivative of the reduced objective} Next, we compute the derivative of the reduced objective. Using the chain rule and recalling \eqref{def j reduced}, together with the fact that  $p=\controltostate(g,f)$, we obtain
		\begin{equation} \label{j prime}
			\begin{aligned}
				j'(g, f; h, \phi)=&\,\begin{multlined}[t] D_f J(p, g, f; \phi) + D_g J(p, g, f; h) + D_p J(p,g, f; \controltostate'(g, f;h, \phi)),
				\end{multlined}
			\end{aligned}
		\end{equation}
		where
		\begin{equation} \label{derivatives f g}
			\begin{aligned}
				D_f J(p, g, f; \phi) =	 \eta \intTO f \phi \dxt,\quad
				D_g J(p, g, f; h) 
				= \gamma \intT \int_{\Gamma} g h \dGt, 
			\end{aligned}
		\end{equation}
		and
		\begin{equation}
			\begin{aligned}
				&D_p J(p,g, f; \controltostate'(g,f; h, \phi))\\
				=&\, \nu \intTO (p-\pd) \chizero \controltostate'(g,f;h, \phi) \dxt+(1-\nu)\intO (p(T)-\pd(T))\chizero \controltostate'(g,f;h, \phi)(T)\dx.
			\end{aligned}
		\end{equation}
		We now use the adjoint problem to express these terms without the explicit appearance of $\controltostate'$.  Testing \eqref{adjoint problem weak} with $z = \controltostate'(g,f;h, \phi)$ yields
	\begin{equation} 
	\begin{aligned}
			    &(1-\nu)  \intO (p(T)-\pd(T)) \chizero z(T) \dx+ \nu \intTO (p - \pd) \chizero z \dxt \\
			   =&\, -	\intTO((1-2k p)z)_{t}\, \padjt   \dxt +\intTO (\csq \nabla \padj + b \nabla \adjointptalpha \padj)\cdot \nabla z \dxt\\
	=&\, 			\begin{multlined}[t] -	\intTO((1-2k p)z)_{t}\, \padjt   \dxt +\intTO (\csq \nabla z +b \nabla \ptalpha z)\cdot \nabla \padj \dxt.
			\end{multlined}
		\end{aligned}
	\end{equation}
Testing the weak form \eqref{limit nonlinear_problem differentiability thm}  solved by $z$  with $\padj$ gives an equivalent form:
			\begin{equation}\label{D_p_S 1}
		\begin{aligned}
			D_p J(p,g, f; \controltostate'(g, f;h, \phi))
			= \intT \int_{\Gamma} (\csq h +b\ptalpha h) \padj \dGt + \intTO \phi \padj \dxt.
		\end{aligned}
	\end{equation}
	 Combining \eqref{D_p_S 1} with \eqref{derivatives f g}, we finally obtain
		\begin{equation} \label{derivative}
			\begin{aligned}
				j'(g,f;h, \phi) =&\, \begin{multlined}[t] \intT \int_{\Gamma} (\csq h +b \ptalpha h) \padj \dGt + \intTO \phi \padj \dxt\\ \hspace*{2cm}
					+\gamma \intT \intG gh \dGt + \eta \intTO f \phi \dxt.
				\end{multlined}
			\end{aligned}
		\end{equation}
	We now state the necessary optimality conditions. From this point onward, we treat the distributed and boundary control cases separately.
		
		\subsection{Optimality conditions for the Numann boundary control problem} We focus now on
	 the reduced boundary control problem
		\begin{equation} \label{reduced problem only g}
			\begin{aligned}
			\fbox{$	\text{Find } \gstar \in  \spacegad, \text{ such that } \ j(\gstar) = \displaystyle \inf_{g \in  \spacegad} j( g)$}
			\end{aligned}
		\end{equation}
		with the objective $j(g) = J(\controltostate(g), g)$, where
		\begin{equation}
			\begin{aligned} 
				J(p, f) =&\, \begin{multlined}[t] \frac{\nu}{2}\| p-\pd\|^2_{L^2(L^2(\Omega_0))}+\frac{1-\nu}{2}\| p(T)-\pd(T) \|^2_{L^2(\Omega_0)} 
					+\frac{\gamma}{2}\|g\|^2_{L^2(L^2(\Gamma))}.
				\end{multlined}
			\end{aligned}
		\end{equation}
	For $h\in \spacegad$ (cf.\ \eqref{def spaceg admissible}), we have the identity
		\begin{equation}
			\begin{aligned}
			b 	\intT \int_{\Gamma} \ptalpha h \, \padj \dGt  =&\, -b \intT \int_{\Gamma} h \adjointptalpha \padj \dGt.
			\end{aligned}
		\end{equation}
		Thus in the setting of exerting control only via the boundary excitation, we find that
		\begin{equation} \label{derivative wrt g}
			\begin{aligned}
				j'(g;h)=&\, \intT \int_{\Gamma} (\csq \padj-b \adjointptalpha \padj +\gamma g)h  \dGt.
			\end{aligned}
		\end{equation}
		The first-order necessary optimality conditions are then immediately  obtained by employing~\cite[Lemma 2.21]{troltzsch2024optimal}; see also~\cite[Theorem 9.2]{manzoni2021optimal}.	
		\begin{theorem}[Necessary optimality conditions for boundary control] \label{thm: opt cond boundary control}
		Let the assumptions of Theorem~\ref{thm: wellp forward} hold.	Let $\tildeg$ be a local solution of the reduced minimization problem
			\begin{equation} \label{reduced problem only g}
				\begin{aligned}
					\min_{g \in \spaceg}   j(g)  \quad  \textup{s.t.}  \quad  g \in \spacegad,
				\end{aligned}
			\end{equation} 
			where $\spacegad$ is defined in \eqref{def spaceg admissible}. Then $\tildeg$ satisfies the following variational inequality:
			\begin{equation}
				\intT \int_{\Gamma} (\csq \padj-b \adjointptalpha \padj +
				\gamma \tildeg )(\tildeg -g) \dGt \geq 0,
			\end{equation}
			where $p$ solves \eqref{ibvp West} and $\padj$ the adjoint problem in the weak sense \eqref{adjoint problem weak}.
		\end{theorem}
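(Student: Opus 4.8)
The plan is to obtain the variational inequality as an instance of the abstract first-order condition \cite[Lemma 2.21]{troltzsch2024optimal}, so the task reduces to checking its two hypotheses: that $\spacegad$ is a nonempty convex subset of the Banach space $\spaceg$, and that the reduced objective $j(g)=J(\controltostate(g),g)$ admits at the local minimizer $\tildeg$ a one-sided directional derivative along every feasible direction, given explicitly by \eqref{derivative wrt g}. Convexity and nonemptiness of $\spacegad$ I would read off its definition \eqref{def spaceg admissible}: it is the intersection of the closed $\spaceg$-ball $\{\|g\|_{\spaceg}\le\gconst\}$ with the convex affine set cut out by the homogeneous compatibility conditions $g(\cdot,0)=g_t(\cdot,0)=0$, and $0\in\spacegad$. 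Hence for any competitor $g\in\spacegad$ and $\eps\in[0,1]$ the segment point $g_\eps:=(1-\eps)\tildeg+\eps g=\tildeg+\eps(g-\tildeg)$ again lies in $\spacegad$, and $\controltostate(g_\eps)$ is well defined by Theorem~\ref{thm: wellp forward}.

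The second hypothesis — directional differentiability of $j$ — I would establish by invoking Proposition~\ref{prop: diff controltostate} with $f$ fixed, $\phi=0$, and direction $h:=g-\tildeg$. Although $h$ itself generally fails to lie in $\spacegad$, the argument of Proposition~\ref{prop: diff controltostate} uses only that the perturbed control $g_\eps=\tildeg+\eps h$ is admissible for small $\eps$ (verified above) and that $\|h\|_{H^2(\HneghalfG)}<\infty$ (clear, since $g,\tildeg\in\spaceg\hookrightarrow H^2(0,T;\HneghalfG)$), so it carries over without change and yields a weak limit $z$ of the difference quotients $z^\eps=(\controltostate(g_\eps)-\controltostate(\tildeg))/\eps$ solving the linearized problem \eqref{limit nonlinear_problem differentiability thm}. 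A key point is that this limit is unique: by Theorem~\ref{thm: wellp forward} the coefficient $1-2kp$ appearing in \eqref{limit nonlinear_problem differentiability thm} satisfies $1-2kp\in H^2(0,T;\Lfour)\cap W^{1,\infty}(0,T;\Linf)$ and $-2kp_t\in H^1(0,T;\Lthree)$ (using $p\in\Xp$ together with the Sobolev embeddings valid in dimension $d\le 3$), so Lemma~\ref{lemma: uniqueness} applies. Consequently the whole family $\{z^\eps\}$ converges to $z=\controltostate'(\tildeg;g-\tildeg)$, the one-sided derivative $j'(\tildeg;g-\tildeg)$ exists, and the chain-rule identity \eqref{j prime}--\eqref{derivative} holds; combining it with the boundary fractional integration-by-parts identity $b\intTG\ptalpha h\,\padj\dGt=-b\intTG h\,\adjointptalpha\padj\dGt$ — which follows from \eqref{adjoint operator rule} applied pointwise on $\Gamma$, legitimate because $h\in\Hsubalpha(0,T;\HthreehalfG)$ and the restriction of $\padj$ to $\Gamma$ lies in $\tau\Hsubalpha(0,T;\HonehalfG)$ by the regularity of $\padj$ from Section~\ref{sec: adjoint} — gives the representation \eqref{derivative wrt g}.

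With these two ingredients, the conclusion is immediate: since $g_\eps\to\tildeg$ in $\spaceg$ as $\eps\searrow0$ and $\tildeg$ is locally optimal, $\eps=0$ is a local minimum of the scalar map $\eps\mapsto j(g_\eps)$ on $[0,\eps_0)$, so its right derivative there, which equals $j'(\tildeg;g-\tildeg)$ by linearity of $h\mapsto j'(\tildeg;h)$, is nonnegative; substituting \eqref{derivative wrt g} yields the asserted inequality for every $g\in\spacegad$, with $p$ solving \eqref{ibvp West} and $\padj$ the adjoint state from \eqref{adjoint problem weak}. I expect the main obstacle to be the upgrade from the merely subsequential convergence supplied by Proposition~\ref{prop: diff controltostate} to the existence of a genuine directional derivative of $j$ — this is precisely where the uniqueness assertion of Lemma~\ref{lemma: uniqueness} is needed, and checking that the coefficients of \eqref{limit nonlinear_problem differentiability thm} meet its hypotheses is the one spot requiring care — together with the justification of the boundary fractional integration by parts, which rests on the mapping properties of $\adjointptalpha$ from Section~\ref{sec: Preliminaries} and on the trace regularity of the adjoint state.
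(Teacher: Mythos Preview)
Your proposal is correct and follows the same route as the paper, which simply states that the optimality conditions ``are immediately obtained by employing~\cite[Lemma 2.21]{troltzsch2024optimal}'' after deriving formula \eqref{derivative wrt g}. You are in fact more careful than the paper on two points: you explicitly flag that Proposition~\ref{prop: diff controltostate} as stated yields only subsequential convergence of the difference quotients and that uniqueness of the linearized solution $z$ (via Lemma~\ref{lemma: uniqueness}, whose hypotheses you correctly verify from $p\in\Xp$) is needed to upgrade this to a genuine directional derivative; and you observe that the direction $h=g-\tildeg$ need not lie in $\spacegad$, so that the hypothesis of Proposition~\ref{prop: diff controltostate} must be read as ``$\tildeg+\eps h\in\spacegad$ for small $\eps$'' rather than ``$h\in\spacegad$''.
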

		\subsection{Optimality conditions for the distributed control problem} Secondly, we focus on the reduced distributed control problem
		\begin{equation} \label{reduced problem only g}
			\begin{aligned}
			\fbox{$	\text{Find } \fstar \in  \Xfad, \text{ such that } \ j(\fstar) =\displaystyle  \inf_{f \in  \Xfad} j( f)$}
			\end{aligned}
		\end{equation}
		with $j(f) = J(\controltostate(f), f)$, where here
				\begin{equation}
			\begin{aligned} 
				J(p, f) =&\, \begin{multlined}[t] \frac{\nu}{2}\| p-\pd\|^2_{L^2(L^2(\Omega_0))}+\frac{1-\nu}{2}\| p(T)-\pd(T) \|^2_{L^2(\Omega_0)} 
					+\frac{\eta}{2}\|f\|^2_{L^2(L^2(\Omega))}.
				\end{multlined}
			\end{aligned}
		\end{equation}
		In this setting, analogously to \eqref{derivative}, we have
		\begin{equation}
			j'(f; \phi) = \intTO (\padj + \eta  f) \phi \dxt.
		\end{equation}
		Similarly to Theorem~\ref{thm: opt cond boundary control}, we have the following first-order necessary optimality conditions.
		\begin{theorem}[Necessary optimality conditions for distributed control] \label{thm: opt cond distributed control}
				Let the assumptions of Theorem~\ref{thm: wellp forward} hold.	Let $\tildef \in \spacefad$ be a solution of the reduced minimization problem
			\begin{equation} \label{reduced problem only f}
				\begin{aligned}
					\text{Find } \tildef \in \spacefad, \text{ such that } \ j(\tildef) = \inf_{f \in \spacefad} j(f),
				\end{aligned}
			\end{equation} 
			where $\spacef$ is defined in \eqref{def spacef admissible}.
			Then $\tildef$ satisfies the following variational inequality:
			\begin{equation}
				\intTO (\padj + \eta f )(\tildef -f) \dxt \geq 0 \quad \forall f \in \spacefad,
			\end{equation}
			where $p$ solves the state problem \eqref{ibvp West} and $\padj$ the adjoint problem in the weak sense \eqref{adjoint problem weak}.
		\end{theorem}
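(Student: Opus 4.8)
The plan is to obtain the variational inequality from the abstract first-order necessary condition for minimizing a one-sidedly directionally differentiable functional over a convex set, inserting the expression for the directional derivative of the reduced cost $j$ established in the preceding subsections. Since $\spacefad = \{ f \in \Xf : \|f\|_{\Xf} \le \fconst \}$ is a closed ball in the Hilbert space $\Xf = L^2(0,T;\Htwo)$, it is convex; hence for every $f \in \spacefad$ and $\lambda \in [0,1]$ the convex combination $\tildef + \lambda (f - \tildef) = (1-\lambda)\tildef + \lambda f$ again lies in $\spacefad$. From the minimality of $\tildef$ we get $j\bigl(\tildef + \lambda(f - \tildef)\bigr) \ge j(\tildef)$ for all such $\lambda$, so dividing by $\lambda$ and letting $\lambda \searrow 0$ gives
\[
j'(\tildef; f - \tildef) \ge 0 \qquad \text{for all } f \in \spacefad,
\]
provided the one-sided directional derivative on the left exists.

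It remains to justify the existence of $j'(\tildef;\cdot)$ along admissible directions and to substitute its value. This is the content of the previous analysis specialized to the distributed setting (boundary perturbation $h=0$). With $p = \controltostate(\tildef)$ and $\phi = f - \tildef$, Proposition~\ref{prop: diff controltostate} shows that the difference quotients $z^\eps = \bigl(\controltostate(\tildef + \eps\phi) - p\bigr)/\eps$ converge, along a subsequence, to a solution $z$ of the linearized problem \eqref{limit nonlinear_problem differentiability thm} with $h=0$. Expanding $\bigl((1-2kp)z\bigr)_t$, that problem has leading coefficient $\fraka = 1 - 2kp$ and lower-order coefficients $\frakl = -2kp_t$, $\frakn = 0$; using $\Xk \hookrightarrow \Linf$ and the Sobolev embeddings in $d \le 3$ together with $p \in \Xp$, one checks $\fraka \in H^2(0,T;\Lfour)$ and $\frakl \in H^1(0,T;\Lthree)$, so Lemma~\ref{lemma: uniqueness} applies and this linearized problem is uniquely solvable. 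Consequently the whole family $\{z^\eps\}$ converges, the directional derivative $\controltostate'(\tildef;\phi)$ is well defined, and the chain rule \eqref{j prime} together with the adjoint identity \eqref{D_p_S 1} — obtained by testing \eqref{adjoint problem weak} with $z$ and the weak linearized equation with $\padj$ — yields, as in \eqref{derivative} with $h=0$,
\[
j'(\tildef;\phi) = \intTO (\padj + \eta\,\tildef)\,\phi \dxt ,
\]
where $\padj$ is the adjoint state associated with $p$.

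Combining the two displays shows that $\tildef$ satisfies the first-order necessary optimality condition of \cite[Lemma 2.21]{troltzsch2024optimal} (see also \cite[Theorem 9.2]{manzoni2021optimal}) for the minimization of $j$ over the convex admissible set $\spacefad$, which is the claimed variational inequality. The only genuinely non-trivial point is the passage from the merely subsequential convergence of the difference quotients supplied by Proposition~\ref{prop: diff controltostate} to actual directional differentiability of $\controltostate$; this hinges on the uniqueness in Lemma~\ref{lemma: uniqueness}, and hence on verifying that the state $p$ from Theorem~\ref{thm: wellp forward} is regular enough for the coefficients $1-2kp$ and $-2kp_t$ to meet that lemma's hypotheses. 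The remaining ingredients — convexity of $\spacefad$, the chain rule, and the adjoint representation of $D_pJ$ — are routine and were essentially carried out already.
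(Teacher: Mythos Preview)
Your proof is correct and follows the same route as the paper, which simply invokes the expression for $j'(\tildef;\phi)$ derived in \eqref{derivative} (with $h=0$) and then applies \cite[Lemma~2.21]{troltzsch2024optimal}. You add a point the paper leaves implicit: Proposition~\ref{prop: diff controltostate} only yields subsequential convergence of the difference quotients, and one needs uniqueness of the limit problem to upgrade this to genuine directional differentiability of $\controltostate$. Your use of Lemma~\ref{lemma: uniqueness} for this is the right idea.

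One small correction: when you rewrite the limit equation \eqref{limit nonlinear_problem differentiability thm} in the form of \eqref{weak linearized problem low}, the identification is not $\frakl=-2kp_t$, $\frakn=0$. Passing from $-\intTO((1-2kp)z)_t\varphi_t$ to $-\intTO((1-2kp)\varphi)_t z_t$ produces lower-order terms, and one finds $\frakl=-4kp_t$ and $\frakn=-2kp_{tt}$ (compare the choice of $\frakn$ in the proof of Proposition~\ref{prop: diff controltostate}). This does not affect your conclusion: since $p\in\Xp$ gives $p_{tt}\in L^2(0,T;\Hone)\hookrightarrow L^2(0,T;\Lthree)$ and $k\in\Linf$, the coefficient $\frakn$ still lies in $L^2(0,T;\Lthree)$ as required by Proposition~\ref{prop: wellp forward lower}, and the additional hypotheses of Lemma~\ref{lemma: uniqueness} on $\fraka$ and $\frakl$ are verified exactly as you indicate.
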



\section*{Conclusion}
The aim of this work was to deepen the understanding of how nonlinear \sloppy acoustic waves can be controlled in complex propagation media.  To this end, we analyzed distributed and boundary optimal control problems subject to the Westervelt equation with time-fractional attenuation, a model relevant ultrasound propagation through biological media and, consequently, for various medical applications.  First, we extended the existing well-posedness theory for the Westervelt equation with nonlocal damping to the setting of inhomogeneous Neumann data. This required constructing a suitable extension of regularized boundary data first. Second, we established the existence of globally optimal controls and analyzed the stability of the optimization problem with respect to perturbations in the targeted pressure distribution and the vanishing regularization parameters. Third, we showed that the adjoint problem, which has state-dependent coefficients, admits a solution even though it does not share the regularity level of the state equation. This allowed us to prove differentiability of the control-to-state mapping in a reduced regularity setting. Finally, these results justified the derivation of adjoint-based first-order optimality conditions for the control problem. \\
\indent Natural directions for future research include developing and analyzing efficient numerical optimization algorithms in this context, as well as building upon the present theoretical framework to incorporate Westervelt-based systems as state constraints, such as the wave-heat systems motivated by ultrasound heating phenomena~\cite{careaga2025westervelt}. Additionally, as the analysis of the state problem in Theorem~\ref{thm: wellp forward} is uniform in $b$ and $\alpha$, this could be used as a basis for investigating the limiting behavior of minimizers as $b\searrow 0$ and $\alpha \nearrow 1$ and establishing a connection to problems constrained by the inviscid and strongly damped Westervelt equations, respectively.

\begin{appendices}
	\section{Details on the derivation of energy estimates} \label{appendix: energy estimates}
	We provide here the missing details of the second and third testing steps within the Faedo--Galerkin procedure in the proof of Theorem~\ref{thm: wellp forward}.
\subsection{Derivation of estimate \eqref{step 2: energy_ineq}} \label{appendix: energy est 1}	Testing the semi-discrete equation with $-\Delta \wregmt$ yields
	\begin{equation} \label{identity_1 Step 2}
		\begin{aligned} 
			&(\fraka \wregmtt -\csq \Delta \wregm- b \Delta \ptalpha \wregm,\,   -\Delta \wregmt)_{\Ltwo} \\
			=&\, ( \FGreg-\frakl \wregmt-\frakn \wregm, -\Delta \wregmt)_{\Ltwo}.
		\end{aligned}
	\end{equation}
Since $\wregmt$ satisfies homogeneous Neumann data, integration by parts yields the identity
	\begin{equation}
		\begin{aligned}
			&(\fraka \wregmtt ,  -\Delta \wregmt)_{\Ltwo} \\
			=&\, 	(\nabla(\fraka \wregmtt) ,  \nabla \wregmt)_{\Ltwo}-	\intG \fraka \wregmtt  \frac{\partial \wregmt}{\partial n}\dG \\
			=&\, \frac12 \ddt	(\fraka \nabla  \wregmt ,  \nabla \wregmt)_{\Ltwo}-\frac12 	(\frakat \nabla  \wregmt ,  \nabla \wregmt)_{\Ltwo} +	(\wregmtt \nabla \fraka,  \nabla \wregmt)_{\Ltwo}.
		\end{aligned}
	\end{equation}
The last two terms above are placed on the right-hand side and, after integrating in time, estimated by
	\begin{equation}
		\begin{aligned}
			&\frac12 \intt  	(\frakat \nabla  \wregmt ,  \nabla \wregmt)_{\Ltwo} -	(\wregmtt \nabla \fraka,  \nabla \wregmt)_{\Ltwo} \ds \\
			\lesssim&\, \intt\|\frakat\|_{\Linf}\|\nabla \wregmt\|^2_{\Ltwo}+ \intt \|\nabla \fraka\|_{\Linf}\|\wregmtt\|_{\Ltwo}\|\nabla \wregmt\|_{\Ltwo}.
		\end{aligned}
	\end{equation}
The terms involving $\frakl$ and $\frakn$ are controlled via H\"older's and Young’s inequalities:
	\begin{equation}
		\begin{aligned}
			&\intt	 ( -\frakl \wregmt-\frakn \wregm, -\Delta \wregmt)_{\Ltwo} \ds \\
			\leq& \frac12 \|-\frakl \wregmt-\frakn \wregm\|_{\LtwotLtwo}^2+\frac12 \|\Delta \wregmt\|^2_{\LtwotLtwo}.
		\end{aligned}
	\end{equation}
Similarly,
	\begin{equation}
		\begin{aligned}
		\intt	 ( \FGreg, -\Delta \wregmt)_{\Ltwo} \ds \lesssim&\, \intt \|\FG \|_{\Ltwo}\|\Delta \wregmt\|_{\Ltwo}\ds.
		\end{aligned}
	\end{equation}
Integrating \eqref{identity_1 Step 2} over $(0,t)$, inserting these estimates, and invoking the coercivity estimate \eqref{coercivity ineq}, we obtain
	\begin{equation} \label{energy_ineq_1 Step 2}
		\begin{aligned}
			&\frac12 	\|\sqrt{\fraka(t)}\nabla  \wregmt(t)\|^2_{\Ltwo}+	 \csqhalf(T) 	\|\D \wregm(t)\|^2_{\Ltwo} + b
			\Calpha(T)  \|\Delta \ptalpha  \wregm \|^2_{\LtwotLtwo} \\
			\lesssim&\,\begin{multlined}[t]  \|\frakat\|_{\LinfLinf}\|\nabla \wregmt\|^2_{\LtwotLtwo}+  \|\nabla \fraka\|_{\LinfLinf}\|\wregmtt\|_{\Ltwo}\|\nabla \wregmt\|_{\LtwotLtwo}\\
				+  \|\FGreg \|_{\LtwoLtwo}^2+\|\Delta \wregmt\|_{\LtwotLtwo}^2\\
				+\|\frakl\|^2_{\LinfLinf}\|\wregmt\|^2_{\LtwotLtwo}+\|\frakn\|^2_{\LinfLinf}\|\wregm\|^2_{\LtwotLtwo}.
			\end{multlined}
		\end{aligned}
	\end{equation}
To estimate $\|\wregmtt\|_{\LtwotLtwo}$, we can rely on the identity
	\begin{equation} \label{id  ptt}
		\begin{aligned}
			(\frakm	 \wregmtt, \wregmtt)_{\Ltwo}
			=&\, \begin{multlined}[t]  (\csq  \Delta \wregm+b  \Delta \ptalpha \wregm - \frakl \wregmt-\frakn \wregm+ \FGreg,  \wregmtt)_{\Ltwo},
			\end{multlined}  
		\end{aligned}
	\end{equation}
which can be seen as an additional testing by $\wregmtt$, from which we obtain by recalling \eqref{nondegeneracy assumption frakm frakm} 
	\begin{equation}  \label{ineq0} 
	\begin{aligned}
		\|\wregmtt\|_{\LtwotLtwo} \lesssim&\, \|  \csq \Delta w+b \Delta \ptalpha \wregm - \frakl \wregmt-\frakn \wregm+ \FGreg\|_{L^2_t(\Ltwo)}\\
		\lesssim&\, \begin{multlined}[t]
			\|   \Delta w\|_{\LtwotLtwo}+b \|{\Delta \ptalpha \wregm}\|_{\LtwotLtwo} +\| \frakl\|_{\LinfLinf}\| \wregmt\|_{\LtwotLtwo}\\+\|\frakn\|_{\LinfLinf} \|\wregm\|_{\LtwotLtwo} +\| \FGreg\|_{L^2_t(\Ltwo)}.
			\end{multlined}
	\end{aligned}	
	\end{equation}
	Thus \eqref{energy_ineq_1 Step 2} becomes
	\begin{equation} \label{energy_ineq_2 Step 2}
		\begin{aligned}
			&\frac12 	\|\sqrt{\fraka(t)}\nabla  \wregmt(t)\|^2_{\Ltwo}+	 \csqhalf 	\|\D \wregm(t)\|^2_{\Ltwo}+ b
			\ulC(T)  \|\Delta \ptalpha \wregm \|^2_{\LtwotLtwo} \\
			\lesssim&\,\begin{multlined}[t]  \calLfraka\|\nabla \wregmt\|^2_{\LtwotLtwo}+  \calLfraka^2 \Bigl\{ \|   \Delta w\|_{\LtwotLtwo}+b \|{\Delta \ptalpha \wregm}\|_{\LtwotLtwo} \\+\| \wregmt\|_{\LtwotLtwo}+\| \wregm\|_{\LtwotLtwo}  +\| \FGreg\|_{L^2_t(\Ltwo)}\Bigr\}\|\nabla \wregmt\|_{\LtwotLtwo}\\ +	 \|\FGreg \|^2_{\LtwoLtwo}+\|\Delta \wregmt\|_{\LtwotLtwo}^2+\calLfraka^2 \|\wregmt\|^2_{\LtwotLtwo}\\+\calLfraka^2\|\wregm\|^2_{\LtwotLtwo}.
			\end{multlined}
		\end{aligned}
	\end{equation}
Young's $\varepsilon$-inequality with $\varepsilon>0$ sufficiently small  allows the  resulting $\eps \|{\Delta \ptalpha \wregm}\|_{\LtwotLtwo} ^2$ term on the right to be absorbed into the left-hand side of \eqref{energy_ineq_2 Step 2}:
	\begin{equation}
			\begin{aligned}
					&\calLfraka^2 b	\|\Delta \ptalpha \wregm\|_{\LtwotLtwo} \|\nabla \wregmt\|_{\LtwotLtwo} \\
					\leq&\, \eps b	\|\Delta \ptalpha \wregm\|_{\LtwotLtwo}^2+ \frac{1}{4\eps} \bar{b}\calLfraka^4 \|\nabla \wregmt\|_{\LtwotLtwo}^2,
				\end{aligned}
		\end{equation}
 which leads to \eqref{step 2: energy_ineq}.

\subsection{Derivation of estimate  \eqref{step 3: energy_ineq}} \label{appendix: energy est 2}  We provide here the missing  details for testing step 3 in the proof of Theorem~\ref{thm: wellp forward}.	 To treat the $\fraka$ term, we use the identity
\begin{equation}\label{Identity_m}
	\begin{aligned}
		&(\Delta(\fraka \wregmtt), \Delta \wregmt)_{\Ltwo} \\
		=&\,  (\fraka\Delta \wregmtt, \Delta \wregmt)_{\Ltwo}+(\wregmtt\, \Delta \fraka +2\nabla \wregmtt \cdot\nabla\fraka, \Delta \wregmt)_{\Ltwo} \\
		=&\, \begin{multlined}[t] \frac12\ddt(	\fraka\Delta \wregmt, \Delta \wregmt)_{\Ltwo}-\frac12(\frakm_t\D \wregmt, \Delta \wregmt)_{\Ltwo}\\ +(\wregmtt \, \Delta \fraka+2\nabla \wregmtt \cdot\nabla\fraka, \Delta \wregmt)_{\Ltwo}. 
		\end{multlined}
	\end{aligned}
\end{equation}
From \eqref{bases_Lapl}, we have $\frac{\partial}{\partial n}\Delta \wregm =0$ on $\Gamma$. Hence, integration by parts yields
\begin{equation}
	\begin{aligned}
		-	(\csq \Delta^2 \wregm, \Delta \wregmt)_{L^2}
		=&\, 	\csq ( \nabla  \Delta \wregm, \nabla \Delta \wregmt)_{\Ltwo} 
		=	\frac12\csq \ddt\|	\nabla\D \wregm\|_{\Ltwo}^2.
	\end{aligned}
\end{equation}
Integrating by parts, using the fact that $\frac{\partial}{\partial n}\Delta \ptalpha \wregm =0$, and employing the coercivity estimate \eqref{coercivity ineq} , we obtain
\begin{equation}
	\begin{aligned}
		-	\intt (b {\Delta^2 \ptalpha \wregm}, \Delta \wregmt)_{\Ltwo}\ds
		=&\, b \intt ( \nabla \ptalpha \Delta \wregm, \nabla \Delta \wregmt)_{\Ltwo} \ds \\
		\geq&\, b {\Calpha  \| \nabla \D \ptalpha \wregm \|^2_{\LtwotLtwo}}. 
	\end{aligned}
\end{equation}
Substituting these identities and lower bounds in \eqref{step 3: identity_1} yields
\begin{equation} \label{step 3: ineq 1}
	\begin{aligned} 
		&\begin{multlined}[t]\frac12\ddt(	\frakm\D \wregmt, \Delta \wregmt)_{\Ltwo}+ \frac12\csq \ddt\|	\nabla\D \wregm\|_{\Ltwo}^2+b  \Calpha  \|\nabla \D \ptalpha \wregm\|^2_{\LtwotLtwo} 
		\end{multlined}  \\
		\leq &\,\begin{multlined}[t]
			\frac12(\frakm_t\D \wregmt, \Delta \wregmt)_{\Ltwo}-(\wregmtt \, \D\frakm+2\nabla \wregmtt \cdot\nabla\frakm, \Delta \wregmt)_{\Ltwo}
			\\ +(\Delta \FGreg-\Delta(\frakl \wregmt)-\Delta(\frakn \wregm), \Delta \wregmt)_{L^2}.
		\end{multlined}
	\end{aligned}
\end{equation}
We treat the $\frakl$ and $\frakn$ terms on the right-hand side as follows:
\begin{equation}\label{l_identity}
	\begin{aligned}
		&- (\D (\frakl \wregmt), \Delta \wregmt )_{L^2}- (\D (\frakn \wregm), \Delta \wregmt )_{\Ltwo}\\
		=&\, -( \frakl\D \wregmt +2 \nabla \frakl \cdot \nabla \wregmt +  \wregmt \D \frakl+\frakn\D \wregm +2 \nabla \frakn \cdot \nabla \wregm +  \wregm \D \frakn, \Delta \wregmt)_{\Ltwo}.
	\end{aligned}
\end{equation}
We estimate the $\frakl$-dependent terms by
\begin{equation}
	\begin{aligned}
		&\intt ( \frakl\D \wregmt +2 \nabla \frakl \cdot \nabla \wregmt +  \wregmt \D \frakl, \Delta \wregmt )_{L^2} \ds \\
		\lesssim&\,\begin{multlined}[t]  \|\frakl\|_{\LinfLinf} \|\Delta \wregmt\|^2_{\LtwotLtwo} + \|\nabla \frakl\|_{\LinfLthree} \|\nabla \wregmt\|_{L^2_t(\Lsix)} \|\Delta \wregmt\|_{\LtwotLtwo} \\+ \|\D \frakl\|_{\LinfLtwo} \|\wregmt\|_{L^2_t(\Linf)}\|\Delta \wregmt\|_{\LtwotLtwo}
		\end{multlined} \\
		\lesssim&\, \|\frakl\|_{\Xfrakl} \| \wregmt\|^2_{\LtwotHtwo},
	\end{aligned}
\end{equation}
where we have used the embedding $\Htwo \hookrightarrow \Linf$ in the last step.  Similarly,
\begin{equation}
	\begin{aligned}
		&\intt ( \frakn \D \wregm +2 \nabla \frakn \cdot \nabla \wregm +  \wregm \D \frakn, \Delta \wregmt )_{L^2} \ds \\
		\lesssim&\, \|\frakn\|_{\Xfrakn} \| \wregmt\|_{\LtwotHtwo}\|w\|_{\LtwotHtwo}.
	\end{aligned}
\end{equation}
After integrating \eqref{step 3: identity_1} over $(0,t)$ for $t \in (0,T)$ and inserting the derived estimates, we obtain
								\begin{equation} \label{energy_ineq_1}
		\begin{aligned}
			&\begin{multlined}[t]  \frac12\nLtwo{\sqrt{\fraka} \Delta \wregmt}^2 \Big \vert_0^t + \csqhalf\nLtwo{\nabla\D \wregm}^2 \Big \vert_0^t + b
				\ulC(T) \|\nabla \D \ptalpha \wregm \|^2_{\LtwotLtwo} 
			\end{multlined} \\
			\leq&\,\begin{multlined}[t] 
				\frac12 \int_0^t (\frakat\D \wregmt, \Delta \wregmt)_{L^2}\ds+ \intt(\wregmtt \, \D\frakm+2\nabla \wregmtt \cdot\nabla\frakm, \Delta \wregmt)_{L^2} \ds \\
				+ \|\Delta F\|_{\LtwoLtwo}^2+ \|\Delta \wregmt\|^2_{\LtwotLtwo} +\calLfraka\|\greg\|_{\spaceglower}  \|\Delta \wregmt\|_{\LtwotLtwo}  
				\\	+  \calLfraka \| \wregmt\|^2_{\LtwotHtwo}+\calLfraka \| \wregm\|_{\LtwotHtwo}\| \wregmt\|_{\LtwotHtwo}.
			\end{multlined}
		\end{aligned}
	\end{equation}
The first term on the right–hand side is bounded using
	\begin{equation} \label{est_aaa_t}
		\begin{aligned}
			\frac12 \int_0^t (\frakat\D \wregmt, \Delta \wregmt)_{L^2}\ds
			\leq&\, \frac12 \intt \|\frakm_t\|_{\Linf}  \|\Delta \wregmt\|^2_{\Ltwo} \ds.
		\end{aligned}
	\end{equation} 
To estimate the $\wregmtt$ terms on the right-hand side of \eqref{energy_ineq_1}, we first use H\"older's and Young's inequalities:
	\begin{equation} \label{est_utteps}
		\begin{aligned}
			&\intt(\wregmtt \, \D\fraka+2\nabla \wregmtt \cdot\nabla\frakm, \Delta \wregmt)_{L^2} \ds \\
			\lesssim&\,  \left(\|\wregmtt\|_{L^2(\Lsix)}\|\Delta \frakm\|_{L^\infty(\Lthree)} + \|\nabla \wregmtt\|_{L^2(\Ltwo)}\|\nabla \frakm\|_{L^\infty(\Linf)} \right)\|\Delta \wregmt\|_{L^2(\Ltwo)}.
		\end{aligned}
	\end{equation}
	By the Sobolev embeddings $\Hone \hookrightarrow \Lthree$ and $\Htwo \hookrightarrow \Linf$, we obtain
	\begin{equation}
		\begin{aligned}
			\|\Delta \frakm\|_{L^\infty(\Lthree)} +	\|\nabla \frakm\|_{L^\infty(\Linf)} \lesssim 	\|\Delta \frakm\|_{L^\infty(\Hone)} +	\|\nabla \frakm\|_{L^\infty(\Htwo)} \lesssim \|\frakm\|_{\Xfrakm}.
		\end{aligned}
	\end{equation}
Hence,
	\begin{equation} \label{est_utt_interim}
		\begin{aligned}
			&\intt(\wregmtt \, \D\fraka+2\nabla \wregmtt \cdot\nabla\fraka, \Delta \wregmt)_{L^2} \ds \\
			\leq&\,  \calLfraka \|\wregmtt\|_{L^2(\Hone)} \|\Delta \wregmt\|_{L^2(\Ltwo)}.
		\end{aligned}
	\end{equation}
To bound $\|\wregmtt\|_{L^2(\Hone)}$, we note that
	\begin{equation}
		\begin{aligned}
			\|\wregmtt\|_{L^2(\Hone)}  \lesssim \|  \wregmtt\|_{L^2(\Ltwo)}+\| \nabla \wregmtt\|_{L^2(\Ltwo)}.
		\end{aligned}
	\end{equation}
We estimated the first term on the right in \eqref{ineq0}. For the second one, we use the identity
	\begin{equation} \label{id nabla ptt}
		\begin{aligned}
		&	(\frakm	\nabla \wregmtt+ \nabla \frakm \wregmtt, \nabla \wregmtt)_{\Ltwo}\\
			=&\, \begin{multlined}[t]  (\csq \nabla \Delta \wregm+b  \nabla \Delta \ptalpha \wregm-\nabla [\frakl \wregmt+\frakn \wregm]+\nabla \FGreg, \nabla \wregmtt)_{\Ltwo},
			\end{multlined}
		\end{aligned}
	\end{equation}
obtained by applying $\nabla$ to the semi-discrete PDE and testing with $\nabla \wregmtt$. We then have
	\begin{equation} \label{ineq1}
		\begin{aligned}
			&\|\nabla  \wregmtt\|_{\LtwotLtwo} \\
			\lesssim&\,  \begin{multlined}[t] 
				\|\nabla \frakm\|_{L^\infty(\Linf)}\|  \csq\Delta \wregm+b  \Delta \ptalpha \wregm- \frakl \wregmt-\frakn \wregm+ \FGreg\|_{\LtwotLtwo}\\
				+ \| b \nabla \Delta \ptalpha \wregm+\csq\nabla \Delta \wregm-  \nabla [\frakl \wregmt]-  \nabla [\frakn \wregm]+\nabla \FGreg\|_{\LtwotLtwo}.
			\end{multlined}
		\end{aligned}
	\end{equation}
Estimating the right-hand side terms in \eqref{ineq1} and incorporating \eqref{ineq0} yields
	\begin{equation} \label{bound H1 wtt}
		\begin{aligned}
			&\|  \wregmtt\|_{\LtwotHone} \\
			\lesssim&\,  \begin{multlined}[t] 
			\calLfraka \Big (b \|  \Delta \ptalpha \wregm\|_{\LtwotHone} + \|  \Delta \wregm\|_{\LtwotHone}+ 
				\| \frakl\|_{\Xfrakl} \|\wregmt\|_{L^2(\Hone)}\\+	\| \frakn\|_{\Xfrakn} \|\wregm\|_{\LtwotHone}+ 
				\| \FGreg\|_{L^2(\Hone)} \Bigr)
			\end{multlined}\\
			\lesssim&\,  \begin{multlined}[t] 
				\calLfraka^2\Big ( b\|  \Delta \ptalpha \wregm\|_{\LtwotHone} + \|   \wregm\|_{\LtwotHtwo}+ 
				\|\wregmt\|_{L^2(\Hone)}+ 
				\| \FGreg\|_{L^2(\Hone)} \Bigr).
			\end{multlined}
		\end{aligned}
	\end{equation}
Returning to \eqref{est_utt_interim} and applying \eqref{bound H1 wtt} gives
	\begin{equation} \label{est_utt_final}
		\begin{aligned}
			&\intt(\wregmtt \, \D\fraka+2\nabla \wregmtt \cdot\nabla\frakm, \Delta \wregmt)_{L^2} \ds \\
			\lesssim&\,  \begin{multlined}[t]   \calLfraka^3 \Big\{ \|   \wregm\|_{\LtwotHtwo}+   \| \wregmt \|_{\LtwotHone}
				+b \| \Delta \ptalpha \wregm\|_{\LtwotHone} \\+\| \FGreg\|_{L^2(\Hone)}\Big\} \|\Delta \wregmt\|_{\LtwotLtwo}.
			\end{multlined}
		\end{aligned}
	\end{equation}
	Analogously to before, Young's inequality applied for any $\eps>0$ yields:
	\begin{equation} \label{est_Keps_gamma}
		\begin{aligned} 
			&  \calLfraka^3 b \|   \Delta \ptalpha \wregm\|_{L^2_t(\Hone)}  (\|\Delta \wregmt\|_{L^2_t(\Ltwo)}+\|\nabla \wregmt\|_{L^2_t(\Ltwo)})\\
			\leq&\,  \eps b	\|   \Delta \ptalpha \wregm\|^2_{L^2_t(\Hone)}+\frac{1}{4 \eps } \bar{b}\,\calLfraka^6\|\Delta \wregmt\|_{L^2_t(\Ltwo)}^{2},
		\end{aligned}
	\end{equation}
allowing to absorb the first term on the right by reducing $\eps$. Employing the derived estimates in \eqref{energy_ineq_1} leads to  \eqref{step 3: energy_ineq}.
	\section{Construction of the function $\rho$} \label{appendix: density results}
We present here the construction of the function $\rho$ used in the proof of Lemma~\ref{lemma: Compatible reg data}.
	\begin{lemma} \label{lemma: constructing r}
		Let $0<r<R$ for $R>0$. There exists $\rho \in C^\infty_0(\R)$, such that 
		\begin{equation} \label{values}
			\begin{aligned}
	\rho (\tau)=1 \ \text{ for}\ |\tau|\leq r, \quad
				\rho (\tau)\in [0,1] \ \text{ for}\ r\leq |\tau|\leq R, \quad 
				\rho (\tau)=0\ \text{ for}\ |\tau|\geq R.
			\end{aligned}
		\end{equation}
	\end{lemma}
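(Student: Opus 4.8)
The plan is to obtain $\rho$ by composing a standard one–variable $C^\infty$ cutoff with the smooth map $\tau\mapsto\tau^2$, which makes smoothness at the origin automatic. First I would recall the classical flat bump $\psi(t)=e^{-1/t}$ for $t>0$ and $\psi(t)=0$ for $t\le 0$; an elementary induction (each derivative of $\psi$ on $(0,\infty)$ has the form $P(1/t)\,e^{-1/t}$ for some polynomial $P$, and this tends to $0$ as $t\searrow 0$) shows $\psi\in C^\infty(\R)$ with $\psi\ge 0$ and $\psi(t)>0$ exactly for $t>0$.

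Next I would set
\[
\chi(s)=\frac{\psi(R^2-s)}{\psi(R^2-s)+\psi(s-r^2)},\qquad s\in\R .
\]
The one point to verify is that the denominator never vanishes: since $r^2<R^2$, for every $s\in\R$ at least one of $R^2-s>0$ and $s-r^2>0$ holds, so at least one summand is strictly positive; hence $\chi$ is a quotient of $C^\infty$ functions with non-vanishing denominator and therefore $\chi\in C^\infty(\R)$. By construction $0\le\chi\le 1$, with $\chi(s)=1$ whenever $s\le r^2$ (then $\psi(s-r^2)=0$) and $\chi(s)=0$ whenever $s\ge R^2$ (then $\psi(R^2-s)=0$).

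Finally I would define $\rho(\tau):=\chi(\tau^2)$. As $\tau\mapsto\tau^2$ is $C^\infty$ on $\R$ and $\chi\in C^\infty(\R)$, the composition is $C^\infty(\R)$, in particular at $\tau=0$. For $|\tau|\le r$ one has $\tau^2\le r^2$, hence $\rho(\tau)=1$; for $|\tau|\ge R$ one has $\tau^2\ge R^2$, hence $\rho(\tau)=0$, so $\operatorname{supp}\rho\subset[-R,R]$ and thus $\rho\in C_0^\infty(\R)$; and for $r\le|\tau|\le R$ one has $r^2\le\tau^2\le R^2$, hence $\rho(\tau)=\chi(\tau^2)\in[0,1]$. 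This is exactly \eqref{values}. I would also remark that, since $\rho$ and all its derivatives are supported in $[-R,R]$, the auxiliary bound $\int_0^\infty\bigl(s\rho(s)+(1+s^2)\rho'(s)+s\rho''(s)\bigr)\ds\le C$ used in the proof of Lemma~\ref{lemma: Compatible reg data} follows immediately, the integrand being bounded on the bounded interval $[0,R]$.

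There is no genuine obstacle in this argument; the only points needing (routine) care are the $C^\infty$-regularity of $\psi$ at $t=0$ and the non-vanishing of the denominator defining $\chi$. The single useful trick is to take $\tau^2$, rather than $|\tau|$, as the inner variable, which renders smoothness of $\rho$ at $\tau=0$ transparent without any separate gluing argument.
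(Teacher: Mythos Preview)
Your proof is correct but uses a genuinely different construction from the paper's. The paper builds a one-sided cutoff by first forming the compactly supported bump $f_1(\tau)=\psi(\tau-r)\psi(R-\tau)$, then taking its tail integral $f_2(\tau)=\int_\tau^\infty f_1(s)\,ds$, normalizing, and finally composing with $|\tau|$; smoothness at the origin holds because $f_2$ is constant on $[0,r]$, so the non-smoothness of $|\cdot|$ at $0$ is harmless. You instead use the ratio construction $\chi(s)=\psi(R^2-s)/(\psi(R^2-s)+\psi(s-r^2))$ and compose with $\tau\mapsto\tau^2$. Both are standard; your route avoids an integration step and makes smoothness at $\tau=0$ immediate from the chain rule, while the paper's antiderivative construction gives a $\rho$ that is automatically monotone on $[r,R]$ and whose derivatives have an explicit sign structure. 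For the auxiliary bound on $\int_0^\infty(s\rho+(1+s^2)\rho'+s\rho'')\,ds$, the paper actually computes via integration by parts that this integral equals $-\int_0^\infty s\rho(s)\,ds\le 0$; your observation that the integrand is bounded and compactly supported is weaker but equally sufficient for the application in Lemma~\ref{lemma: Compatible reg data}.
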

	\begin{proof}
		Such a function can be constructed as follows (see~\cite{kumaresan2002course}).
		We first consider the smooth $C^\infty (\R)$ transition function 
		\begin{equation}
			f(\tau)=\left\{
			\begin{aligned}
				&e^{-1/\tau},&\quad \text{for}\quad \tau>0\\
				&0,&\quad\text{for}\quad \tau\leq 0. 
			\end{aligned}
			\right. 
		\end{equation}
		Then by induction
		\begin{equation}\label{f_k}
			\partial_\tau^\ell f(\tau)=\left\{
			\begin{aligned}
				&p_\ell(1/\tau)e^{-1/\tau},&\quad \text{for}\quad \tau>0\\
				&0,&\quad\text{for}\quad \tau \leq  0
			\end{aligned}
			\right. 
		\end{equation}				
for a certain polynomial $p_\ell$, $\ell=0,1,2,\dots$  of degree at most $\ell+1$. Since for any polynomial we have 
		
		\[ \displaystyle \lim_{\tau \rightarrow 0}p(\tau)e^{-1/\tau}=0, 
		\]
		we conclude that	 $\partial_\tau^\ell f$ is differentiable at $0$. 
		We next set
		\begin{equation}
			f_1(\tau)=f(\tau-r)f(R-\tau).
		\end{equation}
		It is clear that $f_1\in C^\infty_0(\R)$ since $\mathrm{supp}(f_1) \subset [r, R]$. Now, let  
		\begin{equation}
			f_2(\tau)=\int_\tau^\infty f_1(s)\ds. 
		\end{equation}
		We then have 
		\begin{equation}
			f_2(\tau)=\left\{
			\begin{aligned}
				&0,&&\ \text{for}\ \tau\geq R\\
				\int_r^R & f_1(s)\ds&&\ \text{for}\ \tau\leq r. 
			\end{aligned}
			\right.
		\end{equation}
		Hence, the function 
		\begin{equation}
			\rho(\tau)=\frac{f_2(|\tau|)}{\int_r^R f_1(s)\ds}
		\end{equation}
		belongs to $ C_0^\infty(\R)$ and satisfies \eqref{values}.
Integrating by parts, we obtain
			\begin{equation}
				\int_0^\infty (s \rho(s)+ (1+s^2) \rho'(s) + s \rho''(s)) \ds = - \int_0^\infty s \rho(s) \ds \leq 0.
			\end{equation}
	\end{proof} 
\end{appendices}	
 \bibliography{references}{}   
\bibliographystyle{siam}      
  \end{document}